\newtheorem{theorem}{Theorem}
\newtheorem{lemma}[theorem]{Lemma}
\newtheorem{proposition}[theorem]{Proposition}
\newtheorem{corollary}[theorem]{Corollary}
\newdefinition{definition}[theorem]{Definition}
\newproof{proof}{Proof}
\begin{document}

\begin{frontmatter}

\title{List homomorphism problems for signed trees\tnoteref{funding}
}

\tnotetext[funding]{The first author received funding from the European Union’s Horizon 2020 project H2020-MSCA-RISE-2018: Research and Innovation Staff Exchange and from the Charles University Grant Agency project 1580119. The second author was supported by his NSERC Canada Discovery Grant. The fourth and fifth author were also partially supported by the fourth author’s NSERC Canada Discovery Grant. The fifth author was also supported by the Charles University Grant Agency project 1198419 and by the Czech Science Foundation (GA-ČR) project 19-17314J.}

\author[1]{Jan Bok\corref{cor1}}
\cortext[cor1]{Corresponding author}
\ead{bok@iuuk.mff.cuni.cz}

\author[2]{Richard Brewster}
\ead{rbrewster@tru.ca}

\author[3]{Tom\' as Feder}
\ead{tomas@theory.stanford.edu}

\author[4]{Pavol Hell}
\ead{pavol@cs.sfu.ca}

\author[5]{Nikola Jedli\v ckov\' a}
\ead{jedlickova@kam.mff.cuni.cz}

\address[1]{Computer Science Institute, Faculty of Mathematics and Physics, Charles University, Czech Republic}
\address[2]{Department of Mathematics and Statistics, Thompson Rivers University, Canada}
\address[3]{268 Waverley St., Palo Alto, USA}
\address[4]{School of Computing Science, Simon Fraser University, Canada}
\address[5]{Department of Applied Mathematics, Faculty of Mathematics and Physics, Charles University, Czech Republic}

\begin{keyword}

 complexity \sep dichotomy \sep graph homomorphism \sep signed graph

\MSC[2010] 05C60 \sep 05C22 \sep 05C85

\end{keyword}

\begin{abstract}
We consider homomorphisms of signed graphs from a computational perspective.
In particular, we study the list homomorphism problem seeking a homomorphism
of an input signed graph $(G,\sigma)$, equipped with lists $L(v) \subseteq
V(H), v \in V(G)$, of allowed images, to a fixed target signed graph
$(H,\pi)$. The complexity of the similar homomorphism problem without lists
(corresponding to all lists being $L(v)=V(H)$) has been previously classified
by Brewster and Siggers, but the list version remains open and appears
difficult. We illustrate this difficulty by classifying the complexity of the
problem when $H$ is a tree (with possible loops). The tools we develop will 
be useful for classifications of other classes of signed graphs, and in a future 
companion paper we will illustrate this by using them to classify the complexity 
for certain irreflexive signed graphs. The structure of the signed trees in the 
polynomial cases is interesting, suggesting that the class of general signed 
graphs for which the problems are polynomial may have nice structure, analogous 
to the so-called bi-arc graphs (which characterized the polynomial cases of list 
homomorphisms to unsigned graphs).
\end{abstract}

\end{frontmatter}

\section{Motivation}\label{sone}

We investigate a problem at the confluence of two popular topics -- graph
homomorphisms and signed graphs. Their interplay was first considered in an
unpublished manuscript of Guenin~\cite{guenin}, and has since become an
established field of study~\cite{rezazasla}. 

We now introduce the two topics separately. In the study of computational
aspects of graph homomorphisms, the central problem is one of existence --
does an input graph $G$ admit a homomorphism to a fixed target graph $H$? 
(The graphs considered here are undirected graphs with possible loops but no
parallel edges.) 
This is known as the {\em graph homomorphism problem}. It was shown in~\cite{hn}
that this problem is polynomial-time solvable when $H$ has a loop or is
bipartite, and is NP-complete otherwise. This is known as the {\em dichotomy}
of graph homomorphisms (see~\cite{hn1}). The {\em core} of a
graph $H$ is a subgraph of $H$ with the smallest number of vertices to which
$H$ admits a homomorphism; note that such a subgraph is unique up to
isomorphism. A graph with a loop has a vertex with a loop as its core, and 
a (non-empty) bipartite graph has an edge as its core. Thus an equivalent way 
of stating the graph dichotomy result is that the problem is polynomial-time
solvable when the core of $H$ has at most one edge, and is NP-complete
otherwise.

Now suppose the input graph $G$ is
equipped with lists, $L(v) \subseteq V(H), v \in V(G),$ and we ask if there is
a homomorphism $f$ of $G$ to $H$ such that each $f(v) \in L(v)$. This is known
as the {\em graph list homomorphism problem}. This problem also has a dichotomy
of possible complexities~\cite{feder2003bi} -- it is polynomial-time solvable
when $H$ is a so-called bi-arc graph and is NP-complete otherwise. Bi-arc
graphs have turned out to be an interesting class of graphs; for instance,
when $H$ is a reflexive graph (each vertex has a loop), $H$ is a bi-arc graph
if and only if it is an interval graph~\cite{feder1998list}. 

These kinds of complexity questions found their most general formulation in
the context of constraint satisfaction problems. The Feder-Vardi dichotomy 
conjecture~\cite{fv} claimed that every constraint satisfaction problem with 
a fixed template $H$ is polynomial-time solvable or NP-complete. After a 
quarter century of concerted effort by researchers in theoretical computer 
science, universal algebra, logic, and graph theory, the conjecture was 
proved in 2017, independently by Bulatov~\cite{bula} and Zhuk~\cite{zhuk}. 
This exciting development focused research attention on additional homomorphism 
type dichotomies, including ones for signed graphs~\cite{BFHN,dichotomy,FN14}.

The study of signed graphs goes back to~\cite{harary,hararykabell}, and has
been most notably investigated in~\cite{zav81,zav82b,zav82a,Z97,zavsurvey},
from the point of view of colourings, matroids, or embeddings. Following
Guenin, homomorphisms of signed graphs have been pioneered in~\cite{brewgrav}
and~\cite{nasrolsop}. 
The computational aspects of existence of homomorphisms
in signed graphs --- given a fixed signed graph $(H,\pi)$, does an input signed
graph $(G,\sigma)$ admit a homomorphism to $(H,\pi)$ --- were studied
in~\cite{BFHN,FN14}, and eventually a complete dichotomy classification was
obtained in~\cite{dichotomy}.  It is surprisingly similar to the second way we
stated the graph dichotomy result above, see Theorem~\ref{thm:dichotomy},
and the discussion following it.

Although typically homomorphism problems tend to
be easier to classify with lists than without lists (lists allow for recursion
to subgraphs), the complexity of the list homomorphism problem for signed
graphs appears difficult to classify~\cite{bordeaux,dichotomy}. If the analogy
to (unsigned) graphs holds again, then the tractable cases of the problem
should identify an interesting class of signed graphs, generalizing bi-arc
graphs. In this paper, we begin the exploration of this concept, focusing on
the case of signed trees. We find that there is interesting structure to the 
tractable cases.

\section{Terminology and notation}

A {\em signed graph} is a graph $G$, with possible loops and multiple edges
(at most two loops per vertex and at most two edges between a pair of vertices),
together with a mapping $\sigma\colon E(G) \to \{+, -\}$, assigning a sign ($+$ or
$-$) to each edge and each loop of $G$, so that different loops at a vertex have 
different signs, and similarly for different edges between the same two vertices. 
For convenience, we shall usually consider an edge to mean an edge or a loop,
and to emphasize otherwise we shall call it a {\em non-loop edge}. Thus we can
say, for example, that each edge of a signed graph has a sign, meaning both
loops and non-loop edges. We denote a signed graph by $(G,\sigma)$, and
call $G$ its {\em underlying graph} and $\sigma$ its {\em signature}. When the
signature name is not needed, we denote the signed graph $(G,\sigma)$ by
$\widehat{G}$ to emphasize that it has a signature even though we do not give
it a name. We will usually view signs of edges as colours, and call positive 
edges {\em blue}, and negative edges {\em red}. It will be convenient to call a 
red-blue pair of edges with the same endpoint(s) a {\em bicoloured edge}
(this includes loops as well as non-loop edges); however, formally they are 
two distinct edges. By contrast, we call edges that are not part of such a pair 
{\em unicoloured}; moreover, when we refer to an edge as blue or red we shall 
always mean the edge is unicoloured blue or red. We also call an edge {\em 
at least blue} if it is either blue or bicoloured, and similarly for {\em at least red} 
edges. The terms {\em at least positive} and {\em at least negative} are used 
in the same sense. Treating a pair of red-blue edges as one bicoloured 
edge is advantageous in many descriptions, but introduces an ambiguity 
when discussing walks, since a walk in a signed graph could be seen 
as a sequence of incident vertices and edges, and so selecting just 
one edge from a red-blue pair, or it could be interpreted as a
sequence of consecutively adjacent vertices, and hence contain some 
bicoloured edges. This creates particular problem for cycles, since in the 
former view, a bicoloured edge would be seen as a cycle of length two,
with one red edge and one blue edge. In the literature, the former approach 
is more common, but here we take the latter approach. Of course, the two
views coincide if only walks of unicoloured edges are considered. The sign 
of a walk consisting of unicoloured edges $\widehat{G}$ is the product of 
the signs of its edges.  Thus a walk of unicoloured edges is {\em negative} 
if it has an odd number of negative (red) edges, and {\em positive} if it has an 
even number of negative (red) edges. In the case of unicoloured cycles, we also 
call a negative cycle {\em unbalanced} and a positive cycle {\em balanced}. Note
that a vertex with a red loop is a cycle with one negative edge, and hence is unbalanced.
A {\em uni-balanced signed graph} is a signed graph without unbalanced cycles, i.e.,
a signed graph in which all unicoloured cycles (if any) have an even number of 
red edges. An {\em anti-uni-balanced signed graph} is a signed graph in which each 
unicoloured cycle has an even number of blue edges. Thus we have a symmetry 
to viewing the signs as colours, in particular $\widehat{G}$ is uni-balanced if and only 
if $\widehat{G}'$, obtained from $\widehat{G}$ by exchanging the colour of each 
edge, is anti-uni-balanced. We introduce the qualifier "uni-" because the notion
of a {\em balanced} signed graph is well established in the literature: it means a
signed graph without any unbalanced cycles in the classical view, including the
two-cycles formed by red-blue pairs of edges. Thus a balanced signed graph is
a uni-balanced signed graph without bicoloured edges and loops.

We now define the \emph{switching} operation. This operation can be applied
to any vertex of a signed graph and it negates the signs of all its incident
non-loop edges. (The signs of loops are unchanged by switching.) We say that
two signatures $\sigma_1, \sigma_2$ of a graph $G$ are \emph{switching
equivalent} if we can obtain $(G,\sigma_2)$ from $(G,\sigma_1)$ by a sequence
of switchings. In that case we also say that the two signed graphs $(G,\sigma_1)$ 
and $(G,\sigma_2)$ are switching equivalent. (We note a sequence of switchings 
may also be realized by negating all the edges of a single edge cut.) In a very 
formal way, a signed graph is an equivalence class under the switching equivalence, 
and we sometimes use the notation $\widehat{G}$ to mean the entire class. 

It was proved by Zaslavsky~\cite{zav82b} that two signatures of $G$ are switching 
equivalent if and only if they define exactly the same set of negative (or positive) 
cycles. It is easy to conclude that a uni-balanced signed graph is switching equivalent 
to a signed graph with all edges and loops at least blue, and an anti-uni-balanced 
signed graph is switching equivalent to a signed graph with all edges and loops at
least red.

We now consider homomorphisms of signed graphs. Since signed graphs
$\widehat{G}, \widehat{H}$ can be viewed as equivalence classes, a
homomorphism of signed graphs $\widehat{G}$ to $\widehat{H}$ should be a
homomorphism of one representative $(G,\sigma)$ of $\widehat{G}$ to one
representative $(H,\pi)$ of $\widehat{H}$. It is easy to see that this
definition can be simplified by prescribing any fixed representative $(H,
\pi)$ of $\widehat{H}$. In other words, we now consider mapping all possible
representatives $(G,\sigma')$ of $\widehat{G}$ to one fixed representative $(H,\pi)$ 
of $\widehat{H}$. At this point, a homomorphism $f$ of one concrete $(G,\sigma')$
to $(H,\pi)$ is just a homomorphism of the underlying graph $G$ to the underlying graph $H$
preserving the edge colours. Since there are multiple edges, we can either
consider $f$ to be a mapping of vertices to vertices and edges to edges,
preserving vertex-edge incidences and edge-colours, as in~\cite{rezazasla}, or
simply state that blue edges map to edges that are at least blue, red edges map to
edges that are at least red, and bicoloured edges map to bicoloured edges. 
Formally, we state it as follows. 

\begin{definition}\label{ssim}
We say that a mapping $f\colon V(G) \to V(H)$ is a {\em homomorphism} of the signed
graph $(G,\sigma)$ to the signed graph $(H,\pi)$, written as $f\colon (G,\sigma)
\to (H,\pi)$, if there exists a signed graph $(G,\sigma')$, switching
equivalent to $(G,\sigma)$, such that whenever the edge $uv$ is at least positive 
in $(G,\sigma')$, then $f(u)f(v)$ is an edge that is at least positive in $(H,\pi)$, and 
whenever the edge $uv$ is at least negative in $(G,\sigma')$, then $f(u)f(v)$ is an 
edge that is at least negative in $(H,\pi)$. 
\end{definition}

There is an equivalent alternative definition (see~\cite{rezazasla}). A
homomorphism of the signed graph $(G,\sigma)$ to the signed graph $(H,\pi)$ is
a homomorphism $f$ of the underlying graph $G$ to the underlying graph $H$, which maps bicoloured
edges of $(G,\sigma)$ to bicoloured edges of $(H,\pi)$, and which for any
closed walk $W$ in $(G,\sigma)$ with only unicoloured edges for which the
image walk $f(W)$ has also only unicoloured edges, the sign of $f(W)$ in $(H,
\pi)$ is the same as the sign of $W$ in $(G,\sigma)$. (In other words, negative
closed walks map to negative closed walks and positive closed walks map to
positive closed walks.) This definition does
not require switching the input graph before mapping it. The equivalence of
the two definitions follows from the theorem of Zaslavsky~\cite{zav82b} cited
above. That result is constructive, and the actual switching required to
produce the switching equivalent signed graph $(G,\sigma')$ can be found 
in polynomial time~\cite{rezazasla}. 

We deduce the following fact.

\begin{lemma}\label{alter}
Suppose $(G,\sigma)$ and $(H,\pi)$ are signed graphs, and $f$ is a mapping of
the vertices of $G$ to the vertices of $H$. Then $f$ is a homomorphism of the
signed graph $(G,\sigma)$ to the signed graph $(H,\pi)$ if and only if $f$ is
a homomorphism of the underlying graph $G$ to the underlying graph $H$, which
moreover maps bicoloured edges of $(G,\sigma)$ to bicoloured edges of
$(H,\pi)$, and for any closed walk $W$ in $(G,\sigma)$ with only unicoloured
edges for which the image walk $f(W)$ has also only unicoloured edges, the
signs of $W$ and $f(W)$ are the same.
\end{lemma}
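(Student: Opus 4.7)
The plan is to prove the two implications separately; the forward direction unpacks Definition~\ref{ssim} and uses the switching-invariance of closed-walk signs, while the reverse direction constructs the required signature via the mod-$2$ cycle--cut duality of the multigraph, combined with Zaslavsky's theorem.

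For the forward direction ($\Rightarrow$), fix the signature $\sigma'$ switching-equivalent to $\sigma$ guaranteed by Definition~\ref{ssim}. The underlying-graph homomorphism property is immediate. Switching preserves the parallel-edge structure of $G$, so any bicoloured pair at $uv$ in $(G,\sigma)$ is bicoloured in $(G,\sigma')$, and hence simultaneously at-least-positive and at-least-negative there; Definition~\ref{ssim} forces the same at $f(u)f(v)$, which therefore is bicoloured in $(H,\pi)$. For a closed unicoloured walk $W$ in $(G,\sigma)$ whose image $f(W)$ is unicoloured in $(H,\pi)$, I would first observe that a single switch at any vertex $u$ negates only non-loop edges incident to $u$, and a short counting argument (each visit of a closed walk contributes an even number of non-loop incidences at $u$) shows that the number of such edges along $W$ is always even; hence the sign of $W$ agrees in $(G,\sigma)$ and $(G,\sigma')$. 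Definition~\ref{ssim} then forces $\sigma'(e) = \pi(f(e))$ on every unicoloured edge $e$ of $W$ whose image is unicoloured, and multiplying over $W$ yields $\sigma(W) = \sigma'(W) = \pi(f(W))$.

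For the reverse direction ($\Leftarrow$), the plan is to construct a signature $\sigma'$ switching-equivalent to $\sigma$ satisfying Definition~\ref{ssim}. Let $D$ be the set of edges $e = uv$ that are unicoloured in $(G,\sigma)$ and whose image $f(u)f(v)$ is unicoloured in $(H,\pi)$, and let $D' = E(G) \setminus D$. Define a partial function $X_0 \colon D \to \{0,1\}$ by $X_0(e) = 1$ iff $\sigma(e) \ne \pi(f(e))$. By Zaslavsky's theorem, flipping $\sigma$ on a set $X^{-1}(1) \subseteq E(G)$ produces a switching-equivalent signature iff $X$ is an edge-cut of the multigraph $G$, equivalently $\sum_{e\in C} X(e) \equiv 0 \pmod{2}$ on every cycle $C$. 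By the standard cycle--cut duality, $X_0$ extends to such a cut iff it vanishes mod~$2$ on every cycle $C \subseteq D$; but such a $C$ is precisely a closed unicoloured walk in $(G,\sigma)$ with unicoloured image, and the hypothesis yields $\sigma(C) = \pi(f(C))$, i.e., $\sum_{e\in C} X_0(e) \equiv 0$. Fix any extension $X$ and set $\sigma' = \sigma \oplus X$. By construction $\sigma'(e) = \pi(f(e))$ on $D$; every bicoloured pair of $(G,\sigma)$ lies in $D'$ and forms a $2$-cycle of $G$, so the cut condition forces both its edges to receive the same value of $X$ and the pair remains bicoloured in $\sigma'$; and any sign on the remaining edges of $D'$ is consistent with the bicoloured image $f(e)$ in $(H,\pi)$. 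The principal obstacle is this reverse direction---specifically, recognising the closed-walk-sign hypothesis as exactly the mod-$2$ extendability condition for $X_0$ and verifying that the resulting cut does not split any bicoloured pair; once these observations are in place, the rest is bookkeeping.
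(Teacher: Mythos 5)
Your argument is correct, and it follows the same route the paper has in mind: the paper gives no explicit proof of Lemma~\ref{alter}, deducing it from the alternate definition of \cite{rezazasla} together with Zaslavsky's theorem, and your cut--cycle duality argument (a partial $\{0,1\}$-assignment extends to an edge cut iff it is orthogonal to every cycle inside its domain) is precisely the content of that deduction, with the forward direction's switching-invariance of closed-walk signs and the reverse direction's treatment of bicoloured pairs and loops both handled correctly. The only stylistic quibble is that what you attribute to ``Zaslavsky's theorem'' (flipping a set of edges yields a switching-equivalent signature iff that set is an edge cut) is really the paper's parenthetical remark that a sequence of switchings negates exactly an edge cut; Zaslavsky's theorem as cited is the equivalent statement in terms of preserved cycle signs.
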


Note that each negative closed walk contains a negative cycle, and in
particular an irreflexive tree $(H,\pi)$ has no negative closed walks except for those using bicoloured edges.
Thus if
$(H,\pi)$ is an irreflexive tree, then the condition simplifies to having no
negative cycle of $(G,\sigma)$ mapped to unicoloured edges in $(H,\pi)$
(because the image would be a positive closed walk). 
For reflexive trees, the condition requires that no negative cycle of $(G,\sigma)$ 
maps to a positive closed walk in $(H,\pi)$, and no positive 
cycle of $(G,\sigma)$ maps to a negative closed walk.

For our purposes, the simpler Definition~\ref{ssim} is sufficient. Note that
whether an edge is unicoloured or bicoloured is independent of switching, and
that a homomorphism can map a unicoloured edge or loop in $\widehat{G}$ to a
bicoloured edge or loop in $\widehat{H}$ but not conversely.

Let $\widehat{H}$ be a fixed signed graph. The {\em homomorphism problem}
$\textsc{S-Hom}(\widehat{H})$ takes as input a signed graph $\widehat{G}$ and
asks whether there exists a homomorphism of $\widehat{G}$ to $\widehat{H}$.
The formal definition of the list homomorphism problems for signed graphs is
very similar.

\begin{definition}
Let $\widehat{H}$ be a fixed signed graph. The {\em list homomorphism problem}
\textsc{List-S-Hom}$(\widehat{H})$ takes as input a signed graph $\widehat{G}$
with lists $L(v) \subseteq V(H)$ for every $v \in V(G)$, and asks whether
there exists a homomorphism $f$ of $\widehat{G}$ to $\widehat{H}$ such that
$f(v) \in L(v)$ for every $v \in V(G)$.
\end{definition}

We note that when $\widehat{H}$ and $\widehat{H}'$ are switching equivalent
signed graphs, then any homomorphism of an input signed graph $\widehat{G}$ to
$\widehat{H}$ is also a homomorphism to $\widehat{H}'$, and therefore the
problems $\textsc{S-Hom}(\widehat{H})$ and $\textsc{S-Hom}(\widehat{H}')$, as
well as the problems \textsc{List-S-Hom}$(\widehat{H})$ and
\textsc{List-S-Hom}$(\widehat{H}')$, are equivalent.

We call a signed graph $\widehat{H}$ {\em connected} if the underlying graph
$H$ is connected. We call $\widehat{H}$ {\em reflexive} if each vertex of $H$
has a loop, and {\em irreflexive} if no vertex has a loop. We call $\widehat{H}$ 
a {\em signed tree} if $H$, with any existing loops removed, is a tree.

We may assume that the target signed graph $\widehat{H}$ is connected.
This implies no loss of generality for list homomorphism problems, as each component
of an input signed graph $\widehat{G}$ can only be mapped to one component of a 
target signed graph $\widehat{H}$.

\section{More background and connections to constraint satisfaction}\label{back}

We now briefly introduce the constraint satisfaction problems, in the format
used in~\cite{fv}. A~{\em relational system} $G$ consists of a set $V(G)$ of
vertices and a family of relations $R_1, R_2, \dots, R_k$ on $V(G)$. Assume
$G$ is a relational system with relations $R_1, R_2, \dots, R_k$ and $H$ a
relational system with relations $S_1, S_2, \dots, S_k$, where the arity of
the corresponding relations $R_i$ and $S_i$ is the same for all $i=1, 2,
\dots, k$. A {\em homomorphism} of $G$ to $H$ is a mapping $f\colon V(G) \to V(H)$
that preserves all relations, i.e., satisfies $(v_1,v_2,\dots) \in R_i
\implies (f(v_1),f(v_2),\dots) \in S_i$, for all $i=1, 2, \dots, k$. The {\em
constraint satisfaction problem} with fixed template $H$ asks whether or not
an input relational system $G$, with the same arities of corresponding
relations as $H$, admits a homomorphism to $H$.

Note that when $H$ has a single relation $S$, which is binary and symmetric,
then we obtain the graph homomorphism problem referred to at the beginning of
Section \ref{sone}. When $H$ has a single relation $S$, which is an
arbitrary binary relation, we obtain the {\em digraph homomorphism
problem}~\cite{gary} which is in a certain sense~\cite{fv} as difficult to
classify as the general constraint satisfaction problem. When $H$ has two
relations $+, -$, then we obtain a problem that is superficially similar to
the homomorphism problem for signed graphs, except that switching is not
allowed. This problem is called the {\em edge-coloured graph homomorphism
problem}~\cite{rickphd}, and it turns out to be similar to the digraph
homomorphism problem in that it is difficult to classify~\cite{BFHN}. On the
other hand, the homomorphism problem for signed
graphs~\cite{BFHN,dichotomy,FN14}, seems easier to classify, and exhibits a
dichotomy similar to the graph dichotomy classification, see
Theorem~\ref{thm:dichotomy}.

List homomorphism problems are also special cases of constraint satisfaction
problems, as lists can be replaced by unary relations. Consider first the case
of graphs. Suppose $H$ is a fixed graph, and form the relational system $H^{\#}$
with vertices $V(H)$ and the following relations: one binary relation $E(H)$
(this is a symmetric relation corresponding to the undirected edges of the
graph $H$), and $2^{|V(H)|}-1$ unary relations $R_X$ on $V(H)$, each
consisting of a different non-empty subset $X$ of $V(H)$. The constraint
satisfaction problem with template $H^{\#}$ has inputs $G$ with a symmetric
binary relation $E(G)$ (a graph) and unary relations $S_X, X \subseteq V(H)$,
and the question is whether or not a homomorphism exists. If a vertex $v \in
V(G)$ is in the relation $S_X$ corresponding to $R_X$, then any mapping 
preserving the relations must map $v$ to a vertex in $X$; thus imposing the 
relation $S_X$ on $v \in V(G)$ amounts to setting $L(v)=X$. Therefore the 
list homomorphism problem for the graph $H$ is formulated as the
constraint satisfaction problem for the template $H^{\#}$.
 
Such a translation is also possible for homomorphism of signed graphs. Brewster and Graves 
introduced a useful construction. The {\em switching graph} $(H^+,\pi^+)$ has two 
vertices $v_1,v_2$ for each vertex $v$ of $(H,\pi)$, and each edge $vw$ of $(H,\pi)$ gives
rise to edges $v_1w_1, v_2w_2$ of colour $\pi(vw)$ and edges $v_1w_2, v_2w_1$
of the opposite colour. (This definition applies also for loops, i.e., when
$v=w$.) Then each homomorphism of the signed graph $(G,\sigma)$ to the signed
graph $(H,\pi)$ corresponds to a homomorphism of the edge-coloured graph
$(G,\sigma)$ to the edge-coloured graph $(H^+,\pi^+)$ and conversely. For list
homomorphisms of signed graphs, we can use the same transformation, modifying
the lists of the input signed graph. If $(G,\sigma)$ has lists $L(v), v \in
V(G)$, then the new lists $L^+(v), v \in V(G),$ are defined as follows: for
any $x \in L(v)$ with $v \in V(G)$, we place both $x_1$ and $x_2$
in $L^+(v)$. It is easy to see that the signed graph $(G,\sigma)$ has a list
homomorphism to the signed graph $(H,\pi)$ with respect to the lists $L$ if
and only if the edge-coloured graph $(G,\sigma)$ has a list homomorphism to
the edge-coloured graph $(H^+,\pi^+)$ with respect to the lists $L^+$. The new
lists $L^+$ are {\em symmetric sets in $H^+$}, meaning that for any $x \in
V(H), v \in V(G)$, we have $x_1 \in L^+(v)$ if and only if we have $x_2 \in
L^+(v)$. Thus we obtain the list homomorphism problem for the edge-coloured
graph $(H^+,\pi^+)$, restricted to input instances $(G,\sigma)$ with lists $L$
that are symmetric in $H^+$. As above, we can transform this list homomorphism 
problem for the edge-coloured graph $(H^+,\pi^+)$, to a constraint satisfaction 
problem. The details are similar to the construction of $H^{\#}$, except this time 
the new template $(H^+,\pi^+)^*$ is obtained by adding unary relations $R_X = X$
only for sets $X \subseteq V(H^+)$ that are symmetric in $H^+$. 

We conclude that 
our problems \textsc{List-S-Hom}$(\widehat{H})$ fit into the general constraint
satisfaction framework, and therefore it follows from~\cite{bula,zhuk} that
dichotomy holds for problems \textsc{List-S-Hom}$(\widehat{H})$. We therefore
ask which problems \textsc{List-S-Hom}$(\widehat{H})$ are polynomial-time
solvable and which are NP-complete.

The solution of the Feder-Vardi dichotomy conjecture involved an
algebraic classification of the complexity pioneered by Jeavons~\cite{jeav}. A
key role in this is played by the notion of a polymorphism of a relational
structure $H$. If $H$ is a digraph, then a {\em polymorphism} of $H$ is a
homomorphism $f$ of some power $H^t$ to $H$, i.e., a function $f$ that assigns
to each ordered $t$-tuple $(v_1,v_2,\ldots,v_t)$ of vertices of $H$ a vertex
$f(v_1,v_2,\ldots,v_t)$ such that two coordinate-wise adjacent tuples obtain
adjacent images. For general templates, all relations must be similarly
preserved. A polymorphism of order $t=3$ is a {\em majority} if
$f(v,v,w)=f(v,w,v)=f(w,v,v)=v$ for all $v, w$. A {\em Siggers polymorphism} is
a polymorphism of order $t=4$, if $f(a,r,e,a)=f(r,a,r,e)$ for all $a, r, e$.
One formulation of the dichotomy theorem proved by Bulatov~\cite{bula} and
Zhuk~\cite{zhuk} states that the constraint satisfaction problem for the
template $H$ is polynomial-time solvable if $H$ admits a Siggers polymorphism,
and is NP-complete otherwise. Majority polymorphisms are less powerful, but it
is known~\cite{fv} that if $H$ admits a majority then the constraint
satisfaction problem for the template $H$ is polynomial-time solvable.
Moreover, it was shown in~\cite{feder2003bi} that a graph $H$ is a bi-arc
graph if and only if the associated relational system $H^*$ admits a majority
polymorphism. Thus the list homomorphism problem for a graph $H$ with possible
loops is polynomial-time solvable if $H^*$ admits a majority polymorphism, and
is NP-complete otherwise. It was observed in \cite{hyobin} that this is not true
for signed graphs. 

There is a convenient way to think of polymorphisms $f$ of the relational
system $(H^+,\pi^+)^*$. A mapping $f$ is a polymorphism of $(H^+,\pi^+)^*$ 
if and only if it is a polymorphism of the edge-coloured graph $(H^+,\pi^+)$ and if, for
any symmetric set $X \subseteq V(H^+)$, we have $x_1, x_2, \dots, x_t \in X$
then also $f(x_1,x_2,\ldots,x_t) \in X$. We call such polymorphisms of
$(H^+,\pi^+)$ {\em semi-conservative}.

We can apply the dichotomy result of~\cite{bula,zhuk} to obtain an algebraic
classification.

\begin{theorem}
For any signed graph $(H,\pi)$, the problem \textsc{List-S-Hom}$(H,\pi)$ is
polynomial-time solvable if $(H^+,\pi^+)$ admits a semi-conservative Siggers
polymorphism, and is NP-complete otherwise.
\end{theorem}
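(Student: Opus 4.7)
The plan is to assemble the theorem essentially by chaining together the three reductions that have already been set up in the excerpt, and then invoke the Bulatov--Zhuk dichotomy. First I would formalize the equivalence already sketched: an instance $(G,\sigma)$ with lists $L$ of \textsc{List-S-Hom}$(H,\pi)$ translates, via the switching graph construction of Brewster and Graves, into an instance of the list homomorphism problem for the edge-coloured graph $(H^+,\pi^+)$ with lists $L^+$ that are symmetric in $H^+$, and conversely every symmetric-list instance arises this way. This in turn is the constraint satisfaction problem for the relational system $H^*$ (whose unary relations are exactly the symmetric subsets of $V(H^+)$), so
\[
\textsc{List-S-Hom}(H,\pi) \;\equiv\; \mathrm{CSP}(H^*)
\]
as decision problems under polynomial-time reductions in both directions.

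Next I would apply the Bulatov--Zhuk theorem directly to the template $H^*$: $\mathrm{CSP}(H^*)$ is polynomial-time solvable if $H^*$ admits a Siggers polymorphism and NP-complete otherwise. The remaining task is purely translational: to verify that a Siggers polymorphism of the relational system $H^*$ is precisely a semi-conservative Siggers polymorphism of the edge-coloured graph $(H^+,\pi^+)$, where ``semi-conservative'' is used in the sense introduced just before the theorem statement. This is the characterization of polymorphisms of $(H,\pi)^*$ already recorded in the paragraph preceding the theorem: a mapping $f$ preserves the binary edge-coloured relations of $H^*$ iff it is a polymorphism of $(H^+,\pi^+)$, and it preserves every unary relation $R_X$ (with $X$ symmetric) iff $v_1,\dots,v_t \in X$ implies $f(v_1,\dots,v_t) \in X$, i.e.\ iff $f$ is semi-conservative. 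Chaining these equivalences yields the theorem.

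I do not expect any genuine obstacle here: the two nontrivial inputs (the existence of the switching graph reduction and the Bulatov--Zhuk dichotomy) are black-boxed, and the remaining work is bookkeeping. The only point that warrants a careful sentence is to note that the reduction $\textsc{List-S-Hom}(H,\pi) \to \mathrm{CSP}(H^*)$ uses only symmetric lists, so one has to check that restricting to symmetric-list instances does not lose the NP-hardness side of the dichotomy; this is immediate because the forward direction of the Brewster--Graves correspondence produces symmetric lists from arbitrary lists, so the hard instances of \textsc{List-S-Hom}$(H,\pi)$ already embed into symmetric-list instances of $\mathrm{CSP}(H^*)$. With that remark in place, the tractability half and the hardness half follow from the two directions of the Bulatov--Zhuk theorem applied to $H^*$, re-expressed via the semi-conservative polymorphism characterization.
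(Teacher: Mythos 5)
Your proposal is correct and is essentially the paper's own (largely implicit) argument: translate \textsc{List-S-Hom}$(H,\pi)$ into $\mathrm{CSP}(H^*)$ via the Brewster--Graves switching graph with symmetric lists, identify Siggers polymorphisms of $H^*$ with semi-conservative Siggers polymorphisms of $(H^+,\pi^+)$, and invoke Bulatov--Zhuk. One small remark: your closing justification of the hardness half points the wrong way (for NP-hardness of \textsc{List-S-Hom} you need the reduction \emph{from} $\mathrm{CSP}(H^*)$ \emph{to} \textsc{List-S-Hom}, i.e.\ that every symmetric-list instance decodes back to a signed-graph instance), but you already assert that two-way equivalence at the outset, so the proof stands.
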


As mentioned above, one can not replace the semi-conservative Siggers 
polymorphism by a semi-conservative majority polymorphism \cite{hyobin}. 
We focus in this paper on seeking a graph theoretic classification, at least 
for some classes of signed graphs.

\section{Basic facts} \label{sec:basicfacts}

We first mention the dichotomy classification of the problems \textsc{S-Hom}$(\widehat{H})$
from \cite{dichotomy}. A subgraph $\widehat{G}$ of the signed graph $\widehat{H}$ is the 
\emph{s-core} of $\widehat{H}$ if there is homomorphism  $f: \widehat{H} \to \widehat{G}$, and
every homomorphism $\widehat{G} \to \widehat{G}$ is a bijection on $V(G)$. The letter $s$ stands
for {\em signed}. It is again easy to see that the $s$-core is unique up to isomorphism and
switching equivalence.

\begin{theorem}{\cite{dichotomy}}\label{thm:dichotomy}
The problem \textsc{S-Hom}$(\widehat{H})$ is polynomial-time solvable
if the s-core of $\widehat{H}$ has at most two edges, and is NP-complete otherwise.
\end{theorem}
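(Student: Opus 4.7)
The plan is to mirror the classical Hell--Nešetřil dichotomy, lifted to the signed setting via the switching graph construction $(H^+,\pi^+)$ introduced in Section~\ref{back}. Since \textsc{S-Hom}$(\widehat{H})$ is preserved under passing to the s-core and under switching equivalence, we may assume $\widehat{H}$ is its own s-core and choose a convenient representative of its switching class. The argument then splits into (i) a tractability proof enumerating all s-cores on at most two edges, and (ii) a uniform NP-hardness reduction for every remaining s-core.

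For the polynomial side, the strategy is to enumerate, up to switching equivalence, all s-cores with zero, one, or two edges, and produce an algorithm for each. With no edges, the target is a single loopless vertex and the problem is trivial. With one edge, the s-core is either a unicoloured loop or a unicoloured non-loop edge; in the first case a homomorphism exists exactly when $\widehat{G}$ is switching equivalent to an all-positive signed graph, equivalently uni-balanced in the sense of Section~2, which can be tested by a single traversal thanks to Zaslavsky's theorem, and in the second case the algorithm combines a bipartiteness check on $G$ with the same balance test. With two edges, the s-core is either a bicoloured loop (trivial), a bicoloured non-loop edge (reduces to bipartiteness plus a parity check on bicoloured edges of the input), or two unicoloured edges arranged as a path, a pendant loop, or two independent components; in each such case one can verify that $(H^+,\pi^+)$ admits a semi-conservative majority polymorphism, so polynomiality follows from the framework recalled in Section~\ref{back}, or exhibit an explicit reduction to \textsc{2-Sat}.

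For the hardness side, the plan is to show that every s-core $\widehat{H}$ with at least three edges (counting a bicoloured edge as two) encodes a known NP-hard homomorphism problem via an indicator/gadget construction in the sense of Feder--Vardi. The combinatorial dividing line is that three edges force the presence of at least one of the following configurations in $(H^+,\pi^+)$: two unicoloured cycles of opposite parity, an odd unicoloured cycle together with an independent unicoloured edge, or a non-bipartite loopless subgraph; each of these supports a gadget encoding \textsc{3-Colouring} or Hell--Nešetřil hardness on the unsigned level. The gadgets must be designed to respect switching, so that the reduction lifts through the equivalence $\widehat{H}\mapsto(H^+,\pi^+)$.

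The main obstacle I anticipate is the case analysis on the hardness side: one must verify that \emph{every} s-core on three or more edges admits such a gadget, with no exceptional case collapsing to a smaller s-core under switching. A secondary difficulty is confirming that the notion of s-core is well defined up to isomorphism and switching equivalence, which underlies the statement; this requires a Knaser--Tarski style fixed-point argument on signed retracts. Once these structural lemmas are in place, the tractable algorithms are short and the hardness reductions are essentially encodings of the standard NP-hard templates through the switching graph.
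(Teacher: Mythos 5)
This theorem is not proved in the paper at all: it is imported verbatim from~\cite{dichotomy} (Brewster and Siggers), and the authors only remark that the proof there relies on the Hell--Ne\v{s}et\v{r}il graph dichotomy. So the honest comparison is between your sketch and a result that occupies an entire separate paper. Your overall strategy (pass to the s-core, work with the switching graph $(H^+,\pi^+)$, enumerate small s-cores for tractability, reduce from unsigned hardness otherwise) is the right general shape, but as written it is an outline with a genuine gap at its centre, not a proof.

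The gap is the hardness direction. You assert that every s-core with at least three edges must contain one of three configurations in $(H^+,\pi^+)$ (``two unicoloured cycles of opposite parity'', ``an odd unicoloured cycle together with an independent unicoloured edge'', ``a non-bipartite loopless subgraph''), and that each supports a gadget. Neither claim is established, and the first is the entire substance of the theorem: the delicate part of the Brewster--Siggers argument is precisely the classification of which signed graphs fail to retract to a two-edge target, together with bespoke reductions for targets such as a bicoloured edge attached to a unicoloured loop, or a blue loop and a red loop joined by an edge --- cases where $(H^+,\pi^+)$ is reflexive and bipartite-looking and none of your three configurations obviously yields Hell--Ne\v{s}et\v{r}il-style hardness without a concrete, switching-respecting gadget. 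You explicitly flag this case analysis as an ``anticipated obstacle'', which is honest, but it means the NP-completeness half is unproved. There are also smaller inaccuracies on the easy side: a path of two unicoloured edges and a loop with a pendant edge are \emph{not} s-cores (both retract to a single edge after switching), the correct condition for mapping to a single unicoloured blue loop is that the input be balanced (in particular bicoloured-edge-free), not merely uni-balanced as defined in Section~2, and the well-definedness of the s-core is the standard minimal-retract argument rather than anything requiring a fixed-point theorem. None of these small points is fatal, but the missing hardness analysis is.
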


When the signature $\pi$ has all edges positive, the problem \textsc{S-Hom}$(H,\pi)$ 
is equivalent to the unsigned graph homomorphism problem, and the s-core of $(H,\pi)$ 
is just the core of $H$. To compare Theorem~\ref{thm:dichotomy} with the graph dichotomy 
theorem of~\cite{hn} as discussed at the beginning Section~\ref{sone}, we observe that
the core of a graph cannot have exactly two edges, as a core must be either a single vertex 
(possibly with a loop), or a single edge, or a graph with at least three edges. Thus Theorem 
\ref{thm:dichotomy} is stronger than the graph dichotomy theorem from~\cite{hn}, which states 
that the graph homomorphism problem to $H$ is polynomial-time solvable if the core of $H$ 
has at most one edge and is NP-complete otherwise. (However, we note that the proof of
Theorem~\ref{thm:dichotomy} in~\cite{dichotomy} uses the graph dichotomy theorem~\cite{hn}.)

Observe that an instance of the problem $\textsc{S-Hom}(\widehat{H})$ can be
also viewed as an instance of $\textsc{List-S-Hom}(\widehat{H})$ with all
lists $L(v)=V(H)$, therefore if $\textsc{S-Hom}(\widehat{H})$ is NP-complete,
then so is $\textsc{List-S-Hom}(\widehat{H})$. Moreover, if $\widehat{H}'$ is
an induced subgraph of $\widehat{H}$, then any instance of
$\textsc{List-S-Hom}(\widehat{H}')$ can be viewed as an instance of
$\textsc{List-S-Hom}(\widehat{H})$ (with the same lists), therefore if the
problem $\textsc{List-S-Hom}(\widehat{H}')$ is NP-complete, then so is the
problem $\textsc{List-S-Hom}(\widehat{H})$. This yields the NP-completeness of
$\textsc{List-S-Hom}(\widehat{H})$ for all signed graphs $(\widehat{H})$ that
contain an induced subgraph $\widehat{H}'$ whose s-core has more than two
edges. Furthermore, when the signed graph $\widehat{H}$ is uni-balanced, then we 
may assume that all edges are at least blue, and the list homomorphism
problem for $H$ can be reduced to $\textsc{List-S-Hom}(\widehat{H})$. In particular, 
we emphasize that  {\em $\textsc{List-S-Hom}(\widehat{H})$ is NP-complete if $\widehat{H}$ 
is a uni-balanced signed graph (or, by a symmetric argument, an anti-uni-balanced signed 
graph), and the underlying graph $H$ is not a bi-arc graph}~\cite{feder2003bi}.

Next we focus on the class of signed graphs that have no bicoloured loops and no bicoloured 
edges. In this case, the following simple dichotomy describes the classification. (This result 
was previously announced in~\cite{bordeaux}.) It follows from our earlier remarks that these
signed graphs are balanced if and only if they are uni-balanced, and similarly they are anti-balanced 
if and only if they are anti-uni-balanced.

\begin{theorem} \label{thm:bordeaux}
Suppose $\widehat{H}$ is a connected signed graph without bicoloured loops and edges.
If the underlying graph $H$ is a bi-arc graph, and $\widehat{H}$ is balanced or 
anti-balanced, then the problem $\textsc{List-S-Hom}(\widehat{H})$ is polynomial-time 
solvable. Otherwise, the problem is NP-complete.
\end{theorem}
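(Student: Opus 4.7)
The plan is to reduce the polynomial direction to the unsigned bi-arc list homomorphism problem, and then to dispatch NP-hardness in two sub-cases by appealing to results already established in the paper.

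For the polynomial direction, I assume $H$ is a bi-arc graph and $\widehat{H}$ is balanced; the anti-balanced case is symmetric after exchanging colours. Since $\widehat{H}$ has no bicoloured edges or loops, balanced coincides with uni-balanced, so by Zaslavsky's theorem~\cite{zav82b} I can perform a complexity-preserving switching to assume every edge of $\widehat{H}$ is blue. An input $(G,\sigma)$ with lists $L$ then admits a signed homomorphism to $\widehat{H}$ if and only if $(G,\sigma)$ itself can be switched to an all-blue signature, equivalently $(G,\sigma)$ is balanced, and the underlying unsigned graph $G$ admits a list homomorphism to $H$ respecting $L$. Balance is checkable in polynomial time by switching $G$ along a spanning tree and verifying that the remaining edges are blue, and the unsigned list homomorphism problem for $H$ is polynomial by the bi-arc theorem~\cite{feder2003bi}.

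For NP-hardness I would split into two sub-cases. If $H$ is not a bi-arc graph and $\widehat{H}$ is balanced or anti-balanced, the italicised observation immediately preceding the theorem already supplies the reduction: sign the input all-blue or all-red and invoke NP-hardness of the unsigned list homomorphism problem for a non-bi-arc $H$~\cite{feder2003bi}. The remaining, and main, sub-case is when $\widehat{H}$ is neither balanced nor anti-balanced. Here my plan is to show that the s-core of $\widehat{H}$ has at least three edges and then invoke Theorem~\ref{thm:dichotomy} to conclude NP-completeness of $\textsc{S-Hom}(\widehat{H})$, which lifts immediately to $\textsc{List-S-Hom}(\widehat{H})$. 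By hypothesis $\widehat{H}$ contains both a negative cycle and a cycle with an odd number of blue edges, and since signed-graph homomorphisms preserve the signs of unicoloured closed walks (Lemma~\ref{alter}) and every closed walk of a given parity contains a cycle of the same parity, both types of cycles survive in the s-core. Moreover, the s-core of a connected signed graph is connected: the canonical homomorphism from $\widehat{H}$ to its s-core, composed with the inclusion of the s-core into $\widehat{H}$, is an endomorphism of the s-core and hence a vertex bijection, so the image of the canonical homomorphism is a connected spanning subgraph of the s-core. As a subgraph of $\widehat{H}$ the s-core also has no bicoloured edges or loops.

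A short case check then shows that every connected signed graph on at most two edges without bicoloured features is balanced or anti-balanced: the acyclic options (a single non-loop edge, or two non-loop edges sharing a vertex) are vacuously both, and the options containing a loop (a single loop, or a loop with an incident non-loop edge) have a single cycle whose colour parity makes the graph either balanced or anti-balanced. Hence the s-core of $\widehat{H}$ has at least three edges, as required. The one delicate point in the argument, and what I view as the main obstacle, is justifying connectivity of the s-core carefully, because this is exactly what rules out the disconnected configuration consisting of one red loop and one blue loop, which would otherwise give an s-core of two edges that is neither balanced nor anti-balanced and so spoil the application of Theorem~\ref{thm:dichotomy}.
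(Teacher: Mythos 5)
Your proposal is correct and follows essentially the same route as the paper: the polynomial cases reduce to the unsigned bi-arc list homomorphism problem after switching to an all-blue (or all-red) signature, and hardness comes from the remark preceding the theorem when $H$ is not bi-arc, together with the observation that a signed graph containing both an unbalanced cycle and an anti-unbalanced cycle has an s-core with at least three edges, so Theorem~\ref{thm:dichotomy} applies. The paper asserts the s-core bound in one line without justification; your verification via sign preservation under the retraction, connectivity of the s-core, and the case analysis of connected two-edge signed graphs without bicoloured edges supplies exactly the details the paper omits.
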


\begin{proof}
The polynomial cases follow from Feder et al.~\cite{feder2003bi}, by the following argument.
Suppose $\widehat{H}$ is balanced; we may assume all edges are blue. In~\cite{zav82b}, 
there is a polynomial-time algorithm to decide if the input signed graph $\widehat{G}$ is 
balanced. If it is not balanced, there is no homomorphism of $\widehat{G}$ to $\widehat{H}$.
Otherwise, we may assume that $\widehat{G}$ has also all edges blue and hence there is a
homomorphism of $\widehat{G}$ to $\widehat{H}$ if and only if there is a homomorphism of $G$
to $H$. Since $H$ is a bi-arc graph, this can be decided in polynomial time by the algorithm in
\cite{feder2003bi}. The argument is similar if $\widehat{H}$ is anti-balanced. Otherwise, $\widehat{H}$
contains a cycle which cannot be switched to a blue cycle and a cycle which cannot be switched 
to a red cycle, in which case the s-core of $\widehat{H}$ contains at least three edges. 
(This is true even if the cycles are just loops.)
\qed \end{proof}

We have observed that \textsc{List-S-Hom}$(\widehat{H})$ is NP-complete if the
s-core of $\widehat{H}$ has more than two edges. Thus we will focus on signed
graphs $\widehat{H}$ whose s-cores have at most two edges. This is not as simple
as it sounds, as there are many complex signed graphs with this property, including, 
for example, all irreflexive bipartite signed graphs that contain a bicoloured edge, 
and all signed graphs that contain a bicoloured loop. That these cases are not easy
underlines the fact that the assumptions in Theorem~\ref{thm:bordeaux} cannot be
weakened without significant new breakthroughs. Consider, for example, allowing 
bicoloured edges but not bicoloured loops. In this situation, we may focus on the 
case when there is a bicoloured edge (else Theorem~\ref{thm:bordeaux} applies),
and so if there is any loop at all, the s-core would have more than two edges. Thus
we consider irreflexive signed graphs. The s-core is still too big if the underlying
graph has an odd cycle. So in this case it remains to classify the irreflexive bipartite 
signed graphs that contain a bicoloured edge. Even this case is complex. We explore
homomorphisms to irreflexive bipartite signed graphs in a companion paper.

In this paper we focus on $\textsc{List-S-Hom}(\widehat{H})$ when the underlying 
graph of $\widehat{H}$ is a tree with possible loops. We have treated the special cases
of reflexive and irreflexive trees in~\cite{bordeaux} and in the conference version of this paper~\cite{mfcs};
in the general case presented here, the polynomial algorithms are much more involved
and technical, and the structure of these trees turns out to be surprisingly complex.
Nevertheless, they seem to suggest that nice characterizations may be possible, in
analogy to bi-arc trees \cite{feder2007bitr}, see also \cite{feder2003bi}.

We now introduce our basic tool for proving NP-completeness.

\begin{definition}
Let $(U,D)$ be two walks in $\widehat{H}$ of equal length, say $U$, with
vertices $u = u_0,u_1, \ldots,u_k=v$ and $D$, with vertices $u = d_0,d_1,
\ldots,d_k=v$. We say that $(U,D)$ is a \emph{chain}, provided $uu_1,
d_{k-1}v$ are unicoloured edges and $ud_1, u_{k-1}v$ are bicoloured edges, and
for each~$i$, $1 \leq i \leq k-2$, we have
\begin{enumerate}
\item both $u_iu_{i+1}$ and $d_id_{i+1}$ are edges of $\widehat{H}$ 
while $d_iu_{i+1}$ is not an edge of $\widehat{H}$, or
\item both $u_iu_{i+1}$ and $d_id_{i+1}$ are bicoloured edges of $\widehat{H}$ 
while $d_iu_{i+1}$ is not a bicoloured edge of~$\widehat{H}$.
\end{enumerate}
\end{definition}

\begin{theorem}\label{thm:chain}
If a signed graph $\widehat{H}$ contains a chain, then
$\textsc{List-S-Hom}(\widehat{H})$ is NP-complete.
\end{theorem}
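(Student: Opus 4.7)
The plan is to give a polynomial-time reduction from a known NP-hard problem, most naturally NAE-3-SAT or 1-IN-3 SAT, using the chain $(U,D)$ as a two-state gadget inside the input signed graph $\widehat{G}$. The heart of the construction is a \emph{chain gadget}: a path $P = p_0 p_1 \cdots p_k$ with lists $L(p_0) = \{u\}$, $L(p_k) = \{v\}$, and $L(p_i) = \{u_i, d_i\}$ for $1 \le i \le k-1$, whose edges are assigned colours and types (unicoloured versus bicoloured) so as to match the two parallel walks $U$ and $D$ simultaneously. This simultaneous assignment is possible precisely because the chain definition forces the two parallel edges $u_i u_{i+1}$ and $d_i d_{i+1}$ to be of the same kind at each intermediate position (both edges in case~1, both bicoloured in case~2), and likewise at the endpoints.

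The key step would be a no-mixing lemma: every list homomorphism of the chain gadget maps $P$ entirely onto $U$ or entirely onto $D$. I would walk through the intermediate positions to see that the chain conditions block any $d_i \to u_{i+1}$ crossing — in case~1 no such edge exists, and in case~2 the edge $d_i u_{i+1}$ is not bicoloured, while the input edge is chosen to be bicoloured, forcing its image to be bicoloured. The asymmetric endpoint conditions (unicoloured $uu_1$ versus bicoloured $ud_1$, and unicoloured $d_{k-1}v$ versus bicoloured $u_{k-1}v$) then anchor the two possible global choices. Crossings $u_i \to d_{i+1}$ are ruled out by the symmetric argument: reading the chain from $v$ to $u$ and exchanging the roles of $U$ and $D$ yields the same chain structure, and the same analysis blocks the reverse crossing.

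With the chain gadget serving as a binary variable, I would then build clause gadgets by identifying the endpoints $u, v$ of several chain gadgets (possibly with added auxiliary vertices whose lists single out one of $u, v$) in such a way that a globally consistent list homomorphism exists if and only if the source NAE-3-SAT instance is satisfiable. Each clause gadget has constant size depending only on $\widehat{H}$, so the reduction runs in polynomial time.

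The hardest part, and the step I expect will demand the most care, is the no-mixing lemma. The chain definition only blocks one direction of crossings explicitly, so the argument against $u_i \to d_{i+1}$ crossings must rely on a symmetry of the chain structure or on the asymmetric endpoint constraints. A clean realisation of the clause gadget — one that admits exactly the intended satisfying homomorphisms and no spurious ones — is the other point where the construction must be handled carefully; if the direct one-chain approach leaves slack, I would compose two chains traversed in opposite directions inside the same variable gadget to rule out all mixed homomorphisms.
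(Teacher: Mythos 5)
Your high-level plan---a reduction from \textsc{Not-All-Equal SAT} in which a path with lists $\{u\},\{u_1,d_1\},\ldots,\{u_{k-1},d_{k-1}\},\{v\}$ serves as a two-state gadget---is the same as the paper's, but the two steps you yourself flag as the crux do not work as you describe them, and they are exactly where the proof's content lies. The no-mixing lemma in the symmetric form you state is not a consequence of the chain definition, and your symmetry argument fails: the chain conditions forbid only the edges $d_iu_{i+1}$, and reversing the walks while exchanging $U$ and $D$ does yield another chain, but its forbidden crossing at position $i$ is $u_{k-i}d_{k-i-1}$, which as an undirected edge is again of the form $d_mu_{m+1}$ with $m=k-i-1$---the same family. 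Nothing ever excludes the edges $u_id_{i+1}$, so a homomorphism starting on $U$ and crossing to $D$ is not blocked. What the definition does give, and what the paper actually uses, is the one-directional statement that the path cannot map $a_1\mapsto d_1$ and $a_{k-1}\mapsto u_{k-1}$. The chain is deliberately asymmetric ($U$ begins with a unicoloured edge, $D$ ends with one), and the reduction must exploit that asymmetry rather than treat the gadget as a symmetric binary variable.

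More seriously, ``identifying the endpoints $u,v$ of several chain gadgets'' creates no interaction whatsoever: those endpoints have singleton lists, so each chain would still choose its image independently. The coupling in the paper comes from \emph{switching}, which your proposal never invokes. A variable's truth value is whether its vertex (list $\{u\}$) is switched; the three chains of a clause are attached through their first and last interior vertices into two hexagons of alternating red and blue unicoloured edges on $x,y,z$ and $x',y',z'$. Since $U$ starts with a unicoloured edge, mapping a chain onto $U$ forces the two hexagon edges at its $u$-end to agree in colour after switching; since $D$ ends with a unicoloured edge, mapping onto $D$ forces the analogous agreement at the $v$-end; and by the one-directional blocking at least one of these constraints always applies (a $U$-to-$D$ crossing simply triggers both). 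If all three variables are switched alike, all three colour-agreement constraints land on one hexagon, whose odd number of red edges cannot become even under switching---a contradiction. This parity mechanism is the clause gadget, and without it (or some replacement that genuinely couples the chains' states) the reduction does not go through.
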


\begin{proof}
Suppose that $\widehat{H}$ has a chain $(U,D)$ as specified above. We shall
reduce from {\sc Not-All-Equal SAT}. (Each clause has three unnegated
variables, and we seek a truth assignment in which at least one variable is
true and at least one is false, in each clause.) For each clause $x \lor y
\lor z$, we take three vertices $x,y,z$, each with list $\{u\}$, and three
vertices $x',y',z'$, each with list $\{v\}$. For the triple $x,y,z$, we add
three new vertices $p(x,y)$, $p(y,z)$, and $p(z,x)$, each with list
$\{u_1,d_1\}$, and for the triple $x', y', z'$, we add three new vertices
$p(x',y'), p(y',z'), p(z',x')$, each with list $\{u_{k-1},d_{k-1}\}$. We
connect these vertices as follows:
\begin{itemize}
\item $p(x,y)$ adjacent to $x$ by a red edge and to $y$ by a blue edge,
\item $p(y,z)$ adjacent to $y$ by a red edge and to $z$ by a blue edge,
\item $p(z,x)$ adjacent to $z$ by a red edge and to $x$ by a blue edge.
\end{itemize}
Analogously, the hexagon $x', p(x',y'),  y', p(y',z'), z', p(z',x')$ will also
be alternating in blue and red colours, with (say) $p(x',y')$ adjacent to $x'$
by a red edge.

Moreover, we join each pair of vertices $p(x,y)$ and $p(x',y')$ by a separate
path $P(x,y)$ with $k-1$ vertices, say $p(x,y)=a_1, a_2, \ldots, a_{k-2},
a_{k-1}=p(x',y')$, where $a_{i}$ has list $\{u_i, d_i\}$ and the edge
$a_ia_{i+1}$ is blue unless both $u_iu_{i+1}$ and $d_id_{i+1}$ are bicoloured,
in which case $a_ia_{i+1}$ is also bicoloured. Paths $P(z,x)$ and $P(y,z)$ are
defined analogously. See Figure~\ref{fig:chain} for an illustration.

\begin{figure}[b]
\centering
\includegraphics[scale=1]{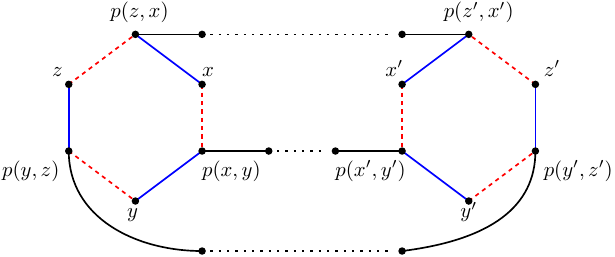}
\caption{The clause gadget for clause $(x \lor y \lor z)$ in Theorem~\ref{thm:chain}.}
\label{fig:chain}
\end{figure}

We observe for future reference that the path $x, p(x,y)=a_1, a_2, \ldots,
a_{k-2},$ $a_{k-1}=p(x',y'), x'$, when considered by itself, admits a list
homomorphism both to $U$ and to $D$. (To see this invoke Zaslavsky's theorem 
characterizing switching equivalent signatures, and use the fact that both 
$U$ and $D$ contain a bicoloured edge.)  Further we note there is no list 
homomorphism to any other subgraph of $U \cup D$ where $p(x,y)$ maps 
to $d_1$ and $p(x',y')$ maps to $u_{k-1}$.  (This follows from the conditions 
in the definition of chain.) 

If $x$ occurs in several clauses, we now have several vertices corresponding 
to $x$. We link all these occurrences by a new vertex $p(x)$ with the list $\{u_1\}$, 
and joined by blue edges to all the occurrences of $x$. Since the edge
$uu_1$ is unicoloured, this will ensure that all occurrences of the vertex $x$ are 
switched or no occurrence of $x$ is switched.  

We denote the resulting graph $(G,\sigma)$. We now claim that this instance of
{\sc {\sc Not-All-Equal SAT}} is satisfiable if and  only if $(G,\sigma)$
admits a list homomorphism to $(H,\pi)$.

Let $\widehat{G}(x,y,z)$ denote the subgraph of $(G,\sigma)$ induced by
$P(x,y), P(z,x), P(y,z)$ and $x, y, z, x', y', z'$. We claim that\\ {\em (i)
any list homomorphism of $\widehat{G}(x,y,z)$ to $U \cup D$ must switch at
either one or two of the  vertices $x, y, z$}, and that \\ {\em (ii) there are
list homomorphisms of $\widehat{G}(x,y,z)$ to $U \cup D$ that switch at any 
one or any two of the vertices $x, y, z$.}

Once this claim is proved, we can associate with every truth assignment a list
homomorphism of $\widehat{G}(x,y,z)$ to $U \cup D$ where a vertex
corresponding to a variable is switched if and only if that variable is true,
and conversely, setting a variable true if its corresponding vertex was
switched in the list homomorphism.
Since we have ensured that all occurrences of a variable are switched or all 
are not switched, we conclude that all occurrences of a variable take on the 
same truth value.

We now prove (i). Since the lists are so restrictive, any list homomorphism is
fully described by what happens to the paths $P(x,y)$, $P(z,x)$, $ P(y,z)$,
and whether or not the vertices corresponding to $x, y, z$ (and $x', y', z'$)
are switched. Note that $U$ begins with a unicoloured edge and $D$ ends with a
unicoloured edge. If neither $x$ nor $y$ or both $x$ and $y$
are switched, the edges $xp(x,y)$ and $p(x,y)y$ are different colours, and 
in any list homomorphism of $\widehat{G}(x,y,z)$ to $U \cup D$ we must map
$P(x,y)$ to $D$.  In particular, $x'p(x',y')$ and $p(x',y')y'$ map to a unicoloured 
edge and must have the same colour.  Thus, if none or all of the vertices $x, y, z$ 
were switched, then the hexagon $x', p(x',y'),  y', p(y',z'), z', p(z',x')$ has an
even number of red and even number of blue edges, which is impossible. (It
started with an odd number of each.) 

For (ii), it remains to show that one or two of the vertices $x, y, z$ can be
switched under a list homomorphism of $\widehat{G}(x,y,z)$ to $U \cup D$. Suppose
first that only one was switched; by symmetry assume it was $x$ (so $y$ and
$z$ were not switched). Now edges $x, p(x,y)$ and $p(x,y), y$ have the same
colour, and $z, p(z,x)$ and $p(z,x), x$ have the same colour. By the above
observation, we can map $P(x,y)$ and $P(z,x)$ to $U$, and map $P(y,z)$ to $D$.
Note that the switchings necessary for these list homomorphisms affect
disjoint sets of vertices (the paths $P(x,y), P(y,z), P(z,x)$), so the
observation applies. If two vertices, say $y$ and $z$ were switched, the
argument is almost the same and we omit it.
\qed \end{proof}

\section{Irreflexive trees} \label{sec:irreflexive}

In this section, $\widehat{H}$ will always be an irreflexive tree. As trees do
not have any cycles, $\widehat{H}$ is trivially uni-balanced, and hence we may 
assume that all edges are at least blue.

\begin{lemma} \label{lem:tripleclaw}
If the underlying graph $H$ contains the graph $F_1$ 
in Figure~\ref{fig:tripleclaw}, then $\textsc{List-S-Hom}(\widehat{H})$ is NP-complete.
\end{lemma}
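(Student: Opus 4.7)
The plan is to apply Theorem~\ref{thm:chain} by exhibiting a chain $(U,D)$ in $\widehat{H}$ whenever the underlying graph contains $F_1$. Since $\widehat{H}$ is an irreflexive tree, it is trivially uni-balanced, so by Zaslavsky's theorem we may switch so that every edge of $\widehat{H}$ is at least blue; bicoloured edges are then preserved under this normalisation. The triple-claw $F_1$, providing three branches emanating from a common centre, gives us the combinatorial freedom needed to build two distinct walks of equal length sharing their endpoints.

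First I would label the three branches of $F_1$ and pick a vertex $u$ on one branch together with a vertex $v$ on a second branch, so that the unique path from $u$ to $v$ passes through the centre. The two walks $U$ and $D$ will both begin at $u$ and end at $v$, but the first step of $U$ will go away from $v$ down the branch containing $u$ (so that $U$ backtracks before heading toward $v$), while the first step of $D$ will go directly toward $v$. A symmetric trick is used at the other endpoint, using the third branch to give $D$ a backtracking tail while $U$ heads straight into $v$. Tuning the number of steps on each backtrack (and the length of the detour into the third branch) lets me equalize the lengths of $U$ and $D$.

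Next I would verify the chain conditions. The endpoint conditions — namely that $uu_1$ and $d_{k-1}v$ are unicoloured while $ud_1$ and $u_{k-1}v$ are bicoloured — force a specific colouring pattern on the initial/final edges, which should be exactly what the structure of $F_1$ prescribes in the figure (some edges are specified as unicoloured blue and others as bicoloured, chosen precisely so this works out). For each intermediate index $i$, I need both $u_iu_{i+1}$ and $d_id_{i+1}$ to be edges while $d_iu_{i+1}$ is not an edge (or, in the bicoloured variant, the same condition about being bicoloured). The non-adjacency is immediate from $H$ being a tree: whenever $u_i$ and $d_i$ lie on distinct branches of $F_1$ and neither is the centre, they are non-adjacent in $H$. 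At the indices where $u_i$ and $d_i$ coincide at the centre, or where they run through the same bicoloured edge, I would invoke condition (2) of the chain definition instead of condition (1).

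The main obstacle I anticipate is bookkeeping: I must simultaneously achieve equal length, the four prescribed endpoint colour constraints, and the per-index non-adjacency conditions, while navigating a tree that forces every walk to backtrack. The cleanest way to handle this is to let the two backtracking legs have carefully chosen parities and then index the intermediate vertices so that $u_i$ and $d_i$ always lie on different branches; the tree-ness of $H$ then supplies the non-adjacency for free, and the colour pattern along the branches of $F_1$ (as dictated by the figure) supplies the remaining edge conditions. Once the chain is produced, Theorem~\ref{thm:chain} gives NP-completeness immediately.
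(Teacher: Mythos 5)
Your approach has a genuine gap: the hypothesis of Lemma~\ref{lem:tripleclaw} constrains only the \emph{underlying} graph $H$ --- it says nothing about the signature. In particular $\widehat{H}$ may contain no bicoloured edges whatsoever (e.g.\ take $\widehat{H}$ to be $F_1$ itself with every edge unicoloured blue). A chain, by definition, requires the bicoloured edges $ud_1$ and $u_{k-1}v$, so in that case no chain exists in $\widehat{H}$ and Theorem~\ref{thm:chain} is simply inapplicable; yet the lemma must still hold. Your plan also appeals to ``the colour pattern along the branches of $F_1$ (as dictated by the figure)'', but Figure~\ref{fig:tripleclaw} specifies no colours --- $F_1$ is an unsigned obstruction. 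You appear to have conflated this lemma with Lemma~\ref{pejsek}, where the forbidden subgraphs in the family $\cal F$ \emph{do} come with prescribed unicoloured/bicoloured edges and the chain machinery is exactly the right tool.

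The intended argument is much shorter and entirely different: an irreflexive tree is trivially uni-balanced, and by the remarks following Theorem~\ref{thm:dichotomy}, for a uni-balanced $\widehat{H}$ the \emph{unsigned} list homomorphism problem for $H$ reduces to $\textsc{List-S-Hom}(\widehat{H})$. Since a tree containing $F_1$ contains it as an induced subgraph and $F_1$ is not a bi-arc graph, $H$ is not a bi-arc graph, so $\textsc{List-Hom}(H)$ is NP-complete by~\cite{feder2003bi}, and the reduction transfers NP-completeness to $\textsc{List-S-Hom}(\widehat{H})$. No chain is needed (or, in general, available).
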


\begin{proof}
If the underlying graph $H$ contains the graph $F_1$ in
Figure~\ref{fig:tripleclaw}, then $H$ is not a bi-arc graph
by~\cite{feder2003bi}, whence $\textsc{List-S-Hom}(\widehat{H})$ is
NP-complete by the remarks following Theorem~\ref{thm:dichotomy}. 
\qed \end{proof}

\begin{figure}[b]
\centering
\includegraphics[scale=1]{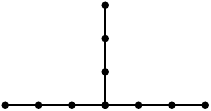}
\caption{The subgraph $F_1$.}
\label{fig:tripleclaw}
\end{figure}

\begin{lemma}\label{pejsek}
If $\widehat{H}$ contains one of the signed
graphs in family $\cal F$ from Figure~\ref{fig:forbgraphsirref} as an induced
subgraph, then $\textsc{List-S-Hom}(\widehat{H})$ is NP-complete
\end{lemma}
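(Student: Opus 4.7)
The plan is to reduce to Theorem~\ref{thm:chain} by exhibiting an explicit chain $(U,D)$ inside each signed graph in the family $\cal F$. Since $\widehat{H}$ contains some $F \in \cal F$ as an \emph{induced} subgraph, any non-edge of $F$ remains a non-edge of $\widehat{H}$, and any non-bicoloured edge of $F$ remains non-bicoloured in $\widehat{H}$; consequently, a chain exhibited inside $F$ is also a chain inside $\widehat{H}$, and Theorem~\ref{thm:chain} then delivers NP-completeness.

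Recall that, $\widehat{H}$ being uni-balanced, we may assume every edge of $\widehat{H}$ is at least blue, so the edges of each $F \in \cal F$ are of just two types: unicoloured blue or bicoloured. For each member $F$, I would pick endpoints $u,v$ and two $uv$-walks $U = u_0 u_1 \ldots u_k$ and $D = d_0 d_1 \ldots d_k$ sharing their first and last vertices, arranged so that the required colour pattern holds at the two ends ($uu_1$ and $d_{k-1}v$ unicoloured; $ud_1$ and $u_{k-1}v$ bicoloured), and so that at each intermediate step either (i) both $u_iu_{i+1}$ and $d_id_{i+1}$ are edges while $d_iu_{i+1}$ is not, or (ii) both are bicoloured while $d_iu_{i+1}$ is not bicoloured. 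In essence, $U$ is routed along a ``unicoloured-first, bicoloured-last'' trajectory and $D$ along its mirror, the two walks separating at whichever branch of $F$ carries the opposite colour pattern near each end.

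The non-edge conditions are easy to check here because $H$ is a tree: any two vertices are joined by a unique path, and in particular $d_iu_{i+1}$ is automatically a non-edge the moment $d_i$ and $u_{i+1}$ lie in different branches of $F$ emanating from the last common vertex of the two walks. The same observation handles the bicoloured alternative, since a bicoloured $d_iu_{i+1}$ would in particular be an edge, which is forbidden by tree-uniqueness. Thus once the two walks are chosen to diverge immediately after leaving $u$ and to reconverge only at $v$, the intermediate clauses of the chain definition are essentially free.

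The main obstacle will be purely bookkeeping: each $F \in \cal F$ demands its own choice of $u, v$ and of the two walks, and the verification that all four boundary colour constraints plus the $k-2$ alternation constraints hold simultaneously for the particular placement of bicoloured edges in $F$. Because the family is a finite explicit list of small signed trees depicted in Figure~\ref{fig:forbgraphsirref}, this reduces to a finite case analysis; handling each graph by a short walk construction as above and then appealing to Theorem~\ref{thm:chain} yields NP-completeness in every case, completing the proof.
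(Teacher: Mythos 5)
Your approach is exactly the paper's: for each member of $\cal F$ one exhibits an explicit chain and invokes Theorem~\ref{thm:chain}, with induced-subgraph containment guaranteeing that the required non-edges and non-bicoloured edges persist in $\widehat{H}$; the paper likewise defers the finite case-by-case chain constructions to Figure~\ref{fig:forbgraphsirref} rather than writing them out. Your additional observation that tree structure makes the non-adjacency clauses easy to verify is sound and consistent with how those chains are checked.
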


\begin{proof}
For each signed tree in the family $\cal F$ we specify a chain either directly in 
Figure~\ref{fig:forbgraphsirref}, or (in case a)) indirectly by reference to a more general
situation addressed in Figure~\ref{fig:mixed_merged}. By Theorem \ref{thm:chain}, these
signed trees yield NP-complete list homomorphism problems. Thus any signed graph $\widehat{H}$
that contains one of them as an induced subgraph has also the problem $\textsc{List-S-Hom}(\widehat{H})$
NP-complete.
\qed \end{proof}

\begin{figure}[t]
\centering
\includegraphics[width=\textwidth]{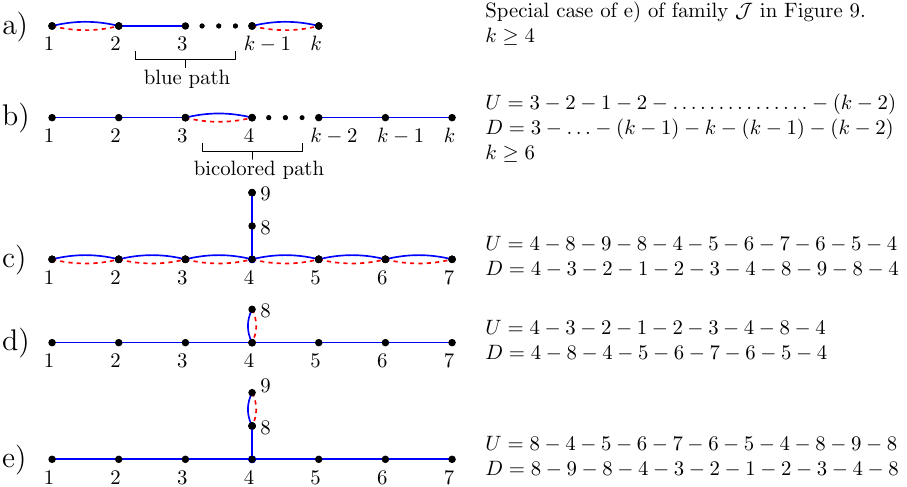}
\caption{The family $\cal F$ of signed irreflexive trees with NP-complete problems.}
\label{fig:forbgraphsirref}
\end{figure}

An irreflexive tree $H$ is a {\em $2$-caterpillar} if it contains a path $P =
v_1v_2 \ldots v_k$, such that each vertex of $H$ is either on $P$, or is a
child of a vertex on $P$, or is a {\em grandchild} of a vertex on $P$, i.e., 
is adjacent to a child of a vertex on $P$. We also say that $H$
is a $2$-caterpillar {\em with respect to the spine $P$}. (Note that the same
tree $H$ can be a $2$-caterpillar with respect to different spines $P$.) In
such a situation, let $T_1, T_2, \dots, T_{\ell}$ be the connected components
of $H \setminus P$. Each $T_i$ is a star adjacent to a unique vertex $v_j$ on
$P$. The tree $T_i$ together with the edge joining it to $v_j$ is called a
{\em rooted subtree} of $H$ (with respect to the spine $P$), and is considered
to be rooted at $v_j$. Note that there can be several rooted subtrees with the
same root vertex $v_j$ on the spine, but each rooted subtree at $v_j$ contains
a unique child of $P$ (and possibly no grandchildren, or possibly several
grandchildren).

We also use the term {\em $2$-caterpillar} for any signed graph to mean that the 
underlying graph, with loops removed, is a $2$-caterpillar.

If $H$ is a $2$-caterpillar with respect to the spine $P$, and additionally
the bicoloured edges of $\widehat{H}$ form a connected subgraph, and there
exists an integer $d$, with $1 \leq d \leq k$, such that:
\begin{itemize}
\item all edges on the path $v_1v_2 \ldots v_d$ are bicoloured, and all edges 
on the path $v_dv_{d+1} \ldots v_k$ are blue,
\item the edges of all subtrees rooted at $v_1, v_2, \ldots, v_{d-1}$ are 
bicoloured, except possibly edges incident to leaves, and
\item the edges of all subtrees rooted at $v_{d+1}, \ldots, v_k$ are 
all blue,
\end{itemize}
\noindent then we call $\widehat{H}$ a {\em good $2$-caterpillar} with respect 
to $P = v_1v_2 \ldots v_k$.

The vertex $v_d$ is called the {\em dividing vertex} of $\widehat{H}$. Note
that the subtrees rooted at $v_d$ are not limited by any condition except the
connectivity of the subgraph formed by the bicoloured edges. A typical example
of a good $2$-caterpillar is depicted in Figure~\ref{fig:example_good_irref}.

\begin{figure}[b]
\centering
\includegraphics[scale=1.15]{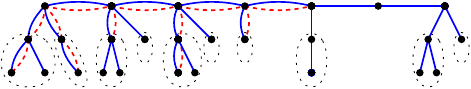}
\caption{An example of a good $2$-caterpillar.}
\label{fig:example_good_irref}
\end{figure}

\begin{lemma} \label{lem:forbirref}
Let $\widehat{H}$ be an irreflexive signed tree. Then $\widehat{H}$ is a good 
$2$-caterpillar if and only if it does not contain any of the graphs from family ${\cal F}$
in Figure~\ref{fig:forbgraphsirref} as an induced subgraph, and the underlying 
graph $H$ does not contain the graph $F_1$ in Figure~\ref{fig:tripleclaw}.
\end{lemma}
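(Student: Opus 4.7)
\medskip
\noindent\textbf{Proof proposal.} I would prove the equivalence in two directions, with the nontrivial work concentrated in the backward implication.

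\emph{Forward direction.} Assume $\widehat{H}$ is a good $2$-caterpillar with spine $P=v_1v_2\ldots v_k$ and dividing vertex $v_d$. The underlying graph $H$ has every vertex within distance $2$ of $P$, so $H$ cannot contain $F_1$, which forces three internally disjoint paths of length $\geq 3$ from a common centre and so is not a $2$-caterpillar. For the family $\mathcal{F}$, I would check each member individually: by the defining conditions of a good $2$-caterpillar (bicoloured edges form a connected set confined to the segment through $v_d$, with leaf exceptions only on that side, and all edges beyond $v_d$ unicoloured blue) none of the finitely many signed patterns in $\mathcal{F}$ can be realised as an induced subgraph. This is essentially an inspection, but it has to be done member by member.

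\emph{Backward direction, Step 1.} Starting from the assumption that $H$ avoids $F_1$ and $\widehat{H}$ has no induced member of $\mathcal{F}$, first obtain that $H$ is a $2$-caterpillar. This is a standard tree-forbidden-subgraph fact: $F_1$ is precisely the minimal tree obstruction to having a spine with all other vertices at distance $\leq 2$, so iteratively pruning leaves twice leaves a path, and this path can serve as the spine $P=v_1v_2\ldots v_k$.

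\emph{Backward direction, Step 2.} Next I would show the bicoloured edges form a connected subgraph. If not, pick two bicoloured edges in distinct bicoloured components; the unique $H$-path between endpoints has at least one blue internal edge. Together with the two bicoloured edges this induces a short signed subtree that I expect to match one of the listed members of $\mathcal{F}$, giving a contradiction.

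\emph{Backward direction, Step 3: locating the dividing vertex.} Let $v_d$ be the spine vertex of largest index $d$ that is incident (on the spine, or via a rooted subtree) to a bicoloured edge. I must verify three things: (a) every spine edge $v_jv_{j+1}$ with $j<d$ is bicoloured; (b) all edges of rooted subtrees hanging at $v_j$, $j<d$, are bicoloured except possibly leaf edges; (c) every edge beyond $v_d$ (spine or subtree) is unicoloured blue. Each potential failure produces an induced pattern: a blue edge nested between two bicoloured ones on the spine, or an internal blue edge in an otherwise bicoloured rooted subtree on the left, or a bicoloured edge that sits strictly beyond $v_d$ in violation of the maximality of $d$ and of connectivity. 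In every case I would match the configuration to a specific member of $\mathcal{F}$, using Step 2 to keep the bicoloured part connected.

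\emph{The main obstacle} is Step 3 together with the correct handling of leaf exceptions in condition (b). The delicacy is that leaf edges of the left subtrees are permitted to be blue, so I need to distinguish, in the induced-subgraph search, between an offending \emph{internal} blue edge (which yields a forbidden pattern) and a harmless blue leaf edge (which does not). Handling this requires tracking the degree of each endpoint inside $H$ and choosing the induced vertex set around the putative violation carefully so that the picked subgraph is genuinely induced and coincides with a member of $\mathcal{F}$. A symmetric delicacy appears at $v_d$ itself, where both colours meet and the conditions on children versus grandchildren are most constrained; I expect the bulk of the case analysis to live here.
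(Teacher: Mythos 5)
Your proposal follows essentially the same route as the paper: the forward direction is a finite inspection, and the backward direction derives, in order, the $2$-caterpillar structure of $H$ from the absence of $F_1$, the connectivity of the bicoloured edges from the absence of class a), the spine colouring and the dividing vertex from the absence of classes b) and c), and the all-blue right-hand subtrees from the absence of d) and e). The one place your argument is set up more fragilely than the paper's is the treatment of the spine: in Step 1 you fix the spine purely from the unsigned tree (by pruning leaves twice) and then in Step 3 check the colour conditions on that fixed spine, whereas the definition of a good $2$-caterpillar quantifies existentially over spines and the right spine depends on the signature (a bicoloured pendant structure may need to be absorbed into the spine, or the dividing vertex pushed to an end of the pruned path so that the unrestricted subtrees at $v_d$ account for it). The paper phrases this step as ``there exists a suitable spine with a dividing vertex, because classes b) and c) are absent''; if you keep your formulation you must additionally argue that whenever some spine witnesses goodness your pruned spine does too, since otherwise a ``violation'' detected on the wrong spine cannot be converted into a member of $\cal F$ (the graph may in fact be good, hence contain none). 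The induced-subgraph delicacy you flag at the end is genuine and is what the paper's parenthetical remark about case c) (the chain surviving when the outer edges are unicoloured) is there to address.
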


\begin{proof}
It is easy to check that none of the depicted signed graphs admits a suitable
spine, and hence they are not good $2$-caterpillars. So assume $\widehat{H}$
does not contain any of the graphs in Figure~\ref{fig:forbgraphsirref} as an
induced subgraph, and the underlying graph $H$ does not contain the graph
$F_1$ in Figure~\ref{fig:tripleclaw} as a subgraph.
If $H$ is not a $2$-caterpillar with respect to any spine, then the underlying
graph $H$ contains the tree in Figure~\ref{fig:tripleclaw}. The bicoloured
edges of $\widehat{H}$ induce a connected subgraph, since there is no subgraph
of type a) from Figure~\ref{fig:forbgraphsirref}. 
Similarly, the unicoloured edges between two non-leaf vertices of $\widehat{H}$
induce a connected subgraph, since there is no subgraph of type b) in 
Figure~\ref{fig:forbgraphsirref}. 
Let $\widehat{H'}$ denote the signed tree obtained from $\widehat{H}$ by removing all
vertices that are leaves incident with a unicoloured edge. Then we can conclude
that there is a vertex $v_d$ that separates unicoloured and bicoloured edges in 
$\widehat{H'}$.

The absence of classes b) and c) from Figure~\ref{fig:forbgraphsirref} 
also ensures that, there is a suitable spine $P = v_1v_2 \ldots v_k$ with
bicoloured edges on $v_1v_2 \ldots v_d$ and blue edges on $v_dv_{d+1} \ldots
v_k$, and with all subtrees of height two rooted at $v_1, \ldots, v_{d-1}$
attached to $P$ with a bicoloured edge. (For this, we note that in the case c),
as long as the edges $34$ and $45$ are bicoloured, the subtree still yields 
an NP-complete problem even with any of the edges $12, 23, 56, 67$ 
unicoloured.) The absence of graphs d) and e) in Figure~\ref{fig:forbgraphsirref} 
ensures that all subtrees rooted at $v_{d+1}, \ldots, v_k$ have all edges blue.
\qed \end{proof}

\begin{theorem} \label{thm:main_irref}
Let $\widehat{H}$ be an irreflexive tree. If $\widehat{H}$ is a good
$2$-caterpillar, then $\textsc{List-S-Hom}(\widehat{H})$ is polynomial-time
solvable. Otherwise, $H$ contains a copy of $F_1$, or $\widehat{H}$ contains 
one of the signed graphs in family $\cal F$ as an induced subgraph, and 
the problem is NP-complete.
\end{theorem}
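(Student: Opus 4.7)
The NP-completeness direction requires no new work: if $\widehat{H}$ is not a good $2$-caterpillar, then Lemma~\ref{lem:forbirref} tells us that either $H$ contains $F_1$ from Figure~\ref{fig:tripleclaw} or $\widehat{H}$ contains a member of the family $\cal F$ as an induced subgraph. In the first case, Lemma~\ref{lem:tripleclaw} gives NP-completeness, and in the second case, Lemma~\ref{pejsek} does. So the only real task is to exhibit a polynomial-time algorithm when $\widehat{H}$ is a good $2$-caterpillar with spine $v_1 v_2 \ldots v_k$ and dividing vertex $v_d$ (with all edges at least blue, as we have noted).

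My plan exploits two structural facts. First, because $F_1 \not\subseteq H$, the underlying caterpillar $H$ is a bi-arc graph, so by~\cite{feder2003bi} the \emph{unsigned} list homomorphism problem for $H$ is polynomial-time solvable. Second, the bicoloured edges of $\widehat{H}$ form a single connected subtree $B$ (containing $v_1, \ldots, v_d$), and every edge outside $B$ is unicoloured blue; hence an image edge imposes a switching constraint \emph{only} when that edge is unicoloured in $\widehat{H}$. I plan to convert the problem into two coupled subproblems: an unsigned list homomorphism to the bi-arc graph $H$, and a system of linear equations over $\mathrm{GF}(2)$ in Boolean switching variables $S(v) \in \{0,1\}$, one per input vertex. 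Concretely, once an image $f(v)$ is chosen for each vertex of $G$, each input edge $uv$ mapped to a unicoloured edge of $\widehat{H}$ contributes an equation of the form $S(u) \oplus S(v) = [\sigma(uv) = -]$, while each edge mapped to a bicoloured edge of $\widehat{H}$ contributes no constraint. The algorithm then is: run arc-consistency for the bi-arc algorithm of~\cite{feder2003bi} on $H$ to prune the lists, then set up and solve the resulting $\mathrm{GF}(2)$-system in polynomial time.

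The main obstacle, as I see it, is justifying that these two parts can be \emph{decoupled}: the choice of $f(v)$ determines which input edges are "blue-sided" (constrained) versus "bicoloured-sided" (free), so a priori the parity system cannot be set up before $f$ is chosen. My proposed resolution is a structural claim that after bi-arc arc-consistency, for every vertex $v \in V(G)$ the refined list $L'(v)$ is either entirely inside $B$, entirely outside $B$, or splits into two "parallel" subsets that are realizable for both choices of $S(v)$ — so that the parity system becomes well-defined independently of the residual choices within $f$. I plan to prove this claim by contradiction: any input configuration that prevents decoupling must, via the forbidden substructure analysis, embed one of the chains used in Lemma~\ref{pejsek} or create a copy of $F_1$ in $\widehat{H}$, both of which are excluded by the hypothesis. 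Once this decoupling lemma is in hand, combining bi-arc arc-consistency with a polynomial-time $\mathrm{GF}(2)$ solver (and a final sweep to combine the refined lists with a consistent switching) completes the algorithm and hence the theorem.
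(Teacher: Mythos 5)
Your NP-completeness direction is exactly the paper's: Lemma~\ref{lem:forbirref} combined with Lemmas~\ref{lem:tripleclaw} and~\ref{pejsek}. The gap is in the polynomial direction, and it sits precisely at the point you flag as ``the main obstacle.'' Your decoupling claim --- that after arc consistency every refined list is entirely inside the bicoloured subtree $B$, entirely outside it, or ``splits into two parallel subsets'' so that the $\mathrm{GF}(2)$ parity system is well defined independently of the residual choice of $f$ --- is false in its direct reading, and no proof of a corrected version is given. Already for the good $2$-caterpillar with spine $v_1v_2v_3v_4$, edge $v_1v_2$ bicoloured and $v_2v_3, v_3v_4$ blue, an input vertex $u$ with list $\{v_1,v_3\}$ adjacent by a unicoloured edge to a vertex with list $\{v_2\}$ survives arc consistency, yet that edge incurs a switching constraint when $f(u)=v_3$ and none when $f(u)=v_1$; the parity system therefore cannot be written down before $f$ is fixed. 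Nor can a failure of decoupling be converted into a chain or a copy of $F_1$ in the fixed target $\widehat{H}$, as your contradiction plan requires --- the obstruction lives in the input $G$ and its lists, not in $H$.

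The paper resolves this exact difficulty by a different mechanism: it constructs a \emph{special bipartite min ordering} of $\widehat{H}$ (Lemma~\ref{mino}), in which every vertex's bicoloured neighbours precede its unicoloured neighbours, runs arc consistency together with a bicoloured arc consistency test, and takes the coordinate-wise minimum homomorphism $f$. This canonical $f$ maps bicoloured edges of $G$ to bicoloured edges of $H$ and pushes all images as far toward the bicoloured part as the lists permit. If some negative unicoloured cycle of $G$ still lands on blue edges of $\widehat{H}$, the structure of a good $2$-caterpillar forces its image into a single blue star (a subtree of type $T_2$ or $T_4$) or into the all-blue portion past the dividing vertex $v_d$; in the first case that star is deleted from the relevant lists and the procedure iterates, and in the second case no homomorphism exists (Lemma~\ref{minp}). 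To salvage your $\mathrm{GF}(2)$ framework you would need some device playing the role of this min ordering to make the choice of $f$ canonical before the parity system is posed; as written, the proposal does not establish the theorem.
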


The second claim follows from Lemmas~\ref{lem:tripleclaw}, \ref{pejsek} and~\ref{lem:forbirref}. 
We prove the first claim in a sequence of lemmas. Suppose that $\widehat{H}$
is a good $2$-caterpillar with respect to the spine $P=v_1v_2 \ldots v_k$.
Since $H$ is bipartite, we may distinguish its vertices as {\em black} and
{\em white}. We may assume that the input signed graph $\widehat{G}$ is
connected and bipartite, and the lists of the black vertices of $G$ contain
only black vertices of $H$, and similarly for white vertices. (Since $G$ is
connected, there are only two possible assignments of black and white colours
to its vertices, and we consider each separately.)

We distinguish four types of rooted subtrees of $\widehat{H}$ with respect to
the spine $P$. 
\begin{itemize}
\item Type $T_1$: a bicoloured edge $v_ix$;
\item Type $T_2$: a bicoloured edge $v_ix$, bicoloured edges $xz_j$ for a set 
of vertices $z_j$, and blue edges $xt_j$ for another set of vertices $t_j$;
\item Type $T_3$: a blue edge $v_ix$ and blue edges $xt_j$ for a set of vertices $t_j$; and
\item Type $T_4$: a blue edge $v_ix$.
\end{itemize}
In the types $T_2$ and $T_3$ we assume that they are not of type $T_1$ or
$T_4$, i.e., that at least some $z_j$ or $t_j$ exist; but we allow in $T_2$
either the set of $z_j$ or the set of $t_j$ to be empty.

Recall that we assume that all edges of $\widehat{H}$ are at least blue.
Since the underlying graph $H$ is bipartite, we have also distinguished its
vertices as black and white; we assume that $v_1$ is white. 

A {\em bipartite min ordering} of the bipartite graph $H$ is a pair $<_b,
<_w$, where $<_b$ is a linear ordering of the black vertices and $<_w$ is a
linear ordering of the white vertices, such that for white vertices $x <_w x'$
and black vertices $y <_b y'$, if $xy', x'y$ are both edges in $H$, then $xy$
is also an edge in $H$. It is known~\cite{fv} that if a bipartite graph $H$
has a bipartite min ordering, then the list homomorphism problem for $H$ can
be solved in polynomial time as follows. 
First apply the {\em arc consistency}
test, which repeatedly visits edges $xy$ and removes from $L(x)$ any vertex of
$H$ not adjacent to some vertex of $L(y)$, and similarly removes from $L(y)$
any vertex of $H$ not adjacent to some vertex of $L(x)$. After arc
consistency, if there is an empty list, no list homomorphism exists, and if
all lists are non-empty, choosing the minimum element of each list, according
to $<_b$ or $<_w$, defines a list homomorphism as required. 
We call a
bipartite min ordering of the signed irreflexive tree $\widehat{H}$ {\em
special} if for any black vertices $x, x'$ and white vertices $y, y'$, if $xy$ is
bicoloured and $xy'$ is blue, then $y <_w y'$, and if $xy$ is bicoloured and
$x'y$ is blue, then $x <_b x'$. In other words, the bicoloured neighbours of
any vertex appear before its unicoloured neighbours, both in $<_b$ and in
$<_w$.

\begin{lemma} \label{mino}
Every good $2$-caterpillar $\widehat{H}$ admits a special bipartite min ordering.
\end{lemma}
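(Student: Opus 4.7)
The plan is to construct the required ordering via a bicoloured-first depth-first traversal of $H$. Fix the bipartition of $H$ with $v_1$ in the white class, and let $B$ denote the subgraph of bicoloured edges of $\widehat{H}$; by the definition of a good $2$-caterpillar, $B$ is connected, so $V(B)$ is either empty or a subtree of $H$. If $V(B)$ is empty, every edge is blue, the special condition is vacuous, and any bipartite min ordering of the underlying bi-arc tree $H$ will do. Otherwise, let $r$ be the vertex of $V(B)$ closest to $v_1$ in $H$. I will perform a DFS of $H$ rooted at $r$, at each visited vertex processing the non-parent neighbours in the order: first those joined to it by a bicoloured edge, then those joined by a blue edge. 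Let $<$ be the resulting visit-order, and define $<_w$ and $<_b$ as the restrictions of $<$ to the two colour classes.

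For the bipartite min-ordering (X-)property, I will use that in the tree $H$ any two vertex-disjoint edges $xy'$ and $x'y$ extend, via the unique tree-path joining them, to a $P_4$ on $\{x, y', x', y\}$, and the would-be edge $xy$ cannot be present since $H$ is acyclic. Hence a potential violation corresponds to a $P_4 = p_1 p_2 p_3 p_4$ in $H$ (with $p_1, p_3$ white and $p_2, p_4$ black) for which both $p_1 <_w p_3$ and $p_4 <_b p_2$ hold in the DFS-derived orders. A short case analysis on which of the four vertices is the least common ancestor in the DFS-tree rooted at $r$, together with the observation that any descendant is visited later than its ancestor (and, where applicable, a two-case split on the order in which sibling subtrees are explored), rules out this configuration. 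This is the standard argument that a DFS of a tree from any root yields a bipartite min ordering, and it applies here independently of the sibling-priority rule.

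For the special property, the key claim is that $V(B)$ forms a contiguous prefix of $<$. Starting at $r \in V(B)$ and applying bicoloured-first priority, the DFS never leaves $V(B)$ while bicoloured-reachable vertices remain unvisited: at every vertex of $V(B)$ the bicoloured edges go to other vertices of $V(B)$, and by connectedness every vertex of $V(B)$ is eventually reached. Once the DFS follows a blue edge leaving $V(B)$, it enters a subtree of $H \setminus V(B)$ from which it cannot return, by acyclicity. Consequently, for every $v \in V(B)$ the DFS parent-edge is bicoloured (it lies on the tree-path from $v$ to $r$, which is entirely inside $V(B)$), every bicoloured neighbour of $v$ is visited inside the $V(B)$-prefix, and every blue neighbour of $v$ is visited strictly later. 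Vertices outside $V(B)$ have no bicoloured neighbours, so the special condition holds trivially there; combining with the previous paragraph gives the required special bipartite min ordering in both $<_w$ and $<_b$.

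The main technical obstacle is formalising the contiguous-prefix claim cleanly, since it relies on combining the connectedness of $B$ (built into the definition of a good $2$-caterpillar) with the acyclicity of $H$, and on a careful choice of the DFS starting vertex $r$ when $v_1 \notin V(B)$ (which can happen when $d = 1$ and all edges incident to $v_1$ in subtrees at $v_d$ are blue). Once the prefix claim is established, the bipartite min ordering property and the bicoloured-before-blue special property follow from the DFS construction by routine verification.
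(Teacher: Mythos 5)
There is a genuine gap: the central claim that a DFS of a tree from any root yields a bipartite min ordering is false, and it already fails on good $2$-caterpillars. Take the spine $v_1v_2\ldots v_7$ with all spine edges bicoloured (so $d=k=7$) and a single blue pendant leaf $x$ at $v_3$; this is a good $2$-caterpillar, since the edge $v_3x$ is incident to a leaf. Your bicoloured-first DFS from $r=v_1$ produces the visit order $v_1,v_2,\ldots,v_7,x$, hence $v_3<_w v_5$ and $v_6<_b x$; but $v_3x$ and $v_5v_6$ are edges while $v_3v_6$ is not, which violates the min ordering condition. The underlying problem is that DFS buries off-spine vertices arbitrarily late, after spine vertices far from their attachment point. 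Relatedly, your reduction of potential violations to $P_4$'s is incorrect: the two disjoint edges $xy'$ and $x'y$ of a violation need not be close together -- in the example above the four vertices $v_3,x,v_5,v_6$ induce a $2K_2$ whose edges are joined by a path of length three -- so a case analysis restricted to $P_4$'s does not cover all violations. (As a sanity check: if DFS always produced a bipartite min ordering, every tree would admit one, and the list homomorphism problem would be polynomial for every tree, contradicting its NP-completeness for the non-bi-arc tree $F_1$.)

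The repair is essentially the construction the paper uses: the ordering must interleave the off-spine vertices with the spine in a left-to-right, BFS-like sweep -- the children of $v_i$ are placed between $v_{i-1}$ and $v_{i+3}$, the grandchildren of $v_i$ between $v_i$ and $v_{i+2}$, with the grandchildren ordered consistently with the order of their parents -- and only then, within each such local group, are bicoloured neighbours placed before blue ones (by listing the subtrees at each $v_i$ in the order of their types $T_1,T_2,T_3,T_4$, and within $T_2$ the bicoloured grandchild edges first). Note that the special property is purely local (each vertex's bicoloured neighbours precede its blue neighbours in $<_b$ and $<_w$); your stronger "contiguous prefix of $V(B)$" requirement is not needed, and insisting on it is precisely what drives the DFS down the bicoloured spine and destroys the min ordering property.
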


\begin{proof}
Let us first observe that any $2$-caterpillar admits a bipartite min ordering
$<_b, <_w$ with $v_1 <_w v_3 <_w v_5, \ldots$ and $v_2 <_b v_4 <_b v_6,
\ldots$, in which the vertices of each subtree rooted at a vertex $v_i$ are placed
as follows: all non-leaf children of $v_i$, as well as all leaf children of $v_i$
adjacent to $v_i$ by bicoloured edges, are ordered between $v_{i-1}$ and $v_{i+1}$,
all leaf children of $v_i$ adjacent to $v_i$ by unicoloured edges are ordered
between $v_{i+1}$ and $v_{i+3}$, and all grandchildren of $v_i$ are ordered
between $v_i$ and $v_{i+2}$. Moreover, we ensure that the order of the
grandchildren conforms to the order of the children, i.e., if a child $a$ of
$v_i$ is ordered before a child $b$ of $v_i$ then the children of $a$ are all
ordered before the children of $b$. Finally, all children of $v_i$ are ordered
after all the grandchildren of $v_{i-1}$. See Figure~\ref{fig:order_irref} for an
illustration.

\begin{figure}[b]
\centering
\includegraphics[scale=1.2]{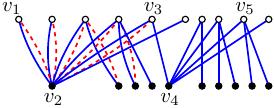}
\caption{An example of special bipartite min ordering.}
\label{fig:order_irref}
\end{figure}

It remains to ensure that the bipartite min ordering we choose is in fact a
special bipartite min ordering, i.e., that each vertex has its neighbours
joined by bicoloured edges ordered before its neighbours joined by
unicoloured edges. Therefore the subtrees rooted at each $v_i$ are handled 
as follows. We will order first the vertices of subtrees of type $T_1$, one at a 
time, then order the vertices of subtrees of type $T_2$, one at a time, then the
vertices of subtrees of type $T_3$, one at a time, and finally the vertices of 
subtrees of type $T_4$, one at a time. Each subtree of type $T_1$ consists 
of only one bicoloured edge, and we order these consecutively between $v_{i-1}$ 
and $v_{i+1}$. Next in order will come the children of $v_i$ in subtrees of type
$T_2$, still before $v_{i+1}$, and in each of these subtrees we order first the
grandchildren of $v_i$ incident to a bicoloured edge before those incident to
a unicoloured edge. We order the subtrees of type $T_3$ similarly. 
Note that by the definition of a good $2$-caterpillar, the subtrees of type $T_3$ 
can only be rooted at vertices $v_i$ with $d \leq i \leq k$. Thus, if we have a 
blue child of $v_i$ ordered before $v_{i+1}$, then $v_iv_{i+1}$ is unicoloured.
Finally, for subtrees of type $T_4$, we order their vertices (each a child of $v_i$)
right after $v_{i+1}$.
\qed \end{proof}

\begin{lemma} \label{minp}
If a signed irreflexive tree $\widehat{H}$ admits a special bipartite min 
ordering, then $\textsc{List-S-Hom}(\widehat{H})$ is polynomial-time solvable.
\end{lemma}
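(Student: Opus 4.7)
Plan: My plan is to exhibit a polynomial-time algorithm based on the special bipartite min ordering $<_b, <_w$. The algorithm has three phases: signed arc consistency, selection, and switching feasibility. In Phase~1 we apply arc consistency adapted for signs: for each edge $uv$ of $G$ and each $x \in L(u)$, keep $x$ only if some $y \in L(v)$ gives $xy$ an edge of $H$ that is bicoloured whenever $uv$ is bicoloured in $\widehat{G}$; iterating to a fixed point and rejecting on empty lists. In Phase~2 we select $f(u)$ aiming to maximize the number of bicoloured image edges; a natural starting point is $f(u) := \min L(u)$ with respect to $<_b$ or $<_w$ according to the bipartition class of $u$. In Phase~3 we verify that the subgraph $G^u_f$ of $G$, consisting of unicoloured edges whose image under $f$ is unicoloured in $H$, is uni-balanced, which then yields a switching $\sigma'$ of $G$ (obtained by solving a linear system over $\mathbb{F}_2$) making $f$ a valid signed homomorphism via Definition~\ref{ssim}.

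Soundness is relatively straightforward. The classical bipartite min-ordering argument applied after Phase~1 ensures that $f$ is an (unsigned) graph homomorphism of $G$ to $H$; the special property (bicoloured neighbours precede unicoloured ones) combined with Phase~1 ensures that every bicoloured edge of $\widehat{G}$ receives a bicoloured image, since otherwise a bicoloured witness in $L(v)$ forced by arc consistency would lie strictly before $f(v)$ in $<_b$, contradicting minimality. Lemma~\ref{alter} together with the uni-balance check in Phase~3 then produces the switching making $f$ valid.

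Completeness, i.e., that the algorithm accepts whenever some valid homomorphism $g: \widehat{G} \to \widehat{H}$ exists, is the main obstacle. One must show that the $f$ produced by the algorithm has $G^u_f$ uni-balanced. This would follow from a monotonicity claim $G^u_f \subseteq G^u_g$, but this claim can fail in the presence of certain $T_4$-type subtrees of $\widehat{H}$, where $f$ is pulled by the ordering toward a subtree leaf with only blue edges while $g$ ranges over the bicoloured spine. Hence Phase~2 must be augmented with a greedy correction that, whenever Phase~3 would reject, locally replaces $f(u)$ by the next larger element of $L(u)$ possessing a bicoloured neighbour in the relevant $L(v)$. The plan is to prove that this correction terminates in polynomial time --- each step strictly reduces $|G^u_f|$ --- and always succeeds when $g$ exists, by leveraging the good $2$-caterpillar structure: bicoloured edges form a connected subtree, the spine's bicoloured prefix $v_1, \ldots, v_d$ precedes its blue suffix in $<_b, <_w$, and subtree types $T_1, T_2$ precede $T_3, T_4$ in the construction of Lemma~\ref{mino}, guaranteeing that bicoloured-capable replacements are always reachable from a corresponding vertex of $g$ within $L(u)$. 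Making this greedy argument fully rigorous --- verifying that each correction step respects the min-ordering property used in Phase~2 and that the fixed point reached is still a graph homomorphism --- is the technical heart of the proof.
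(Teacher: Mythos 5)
Your Phases 1 and 2 coincide with the paper's algorithm: arc consistency for the underlying graphs plus a bicoloured arc consistency pass, followed by taking the minimum of each list in the special ordering; the argument that bicoloured edges of $\widehat{G}$ then receive bicoloured images is also the paper's. The problem is that your completeness argument stops exactly where the work begins. You correctly observe that the minimum assignment $f$ may map a negative unicoloured cycle $C$ to an all-blue closed walk even when a valid homomorphism $g$ exists, but the repair you sketch --- locally bumping $f(u)$ to the next element of $L(u)$ with a bicoloured neighbour in $L(v)$ --- is neither shown to preserve the homomorphism property at the other neighbours of $u$, nor shown to decrease $\lvert G^u_f\rvert$ (changing one image can create new unicoloured-to-unicoloured edges elsewhere), nor shown to reach a valid $f$ whenever $g$ exists. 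You acknowledge this yourself, which means the proposal is a plan rather than a proof.

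The paper closes this gap with a different, global repair that exploits the good $2$-caterpillar structure rather than a vertex-by-vertex correction. Given an offending negative cycle $C$ with $f(C)$ all blue, the image $f(C)$ must lie either (i) inside the blue star $xt_1,\dots,xt_m$ of a single type-$T_2$ subtree rooted at some $v_i$ with $i\le d$, or (ii) inside the type-$T_4$ subtrees rooted at a single $v_i$ with $i\le d-1$, or (iii) inside the all-blue suffix of $\widehat{H}$, in which case the algorithm correctly rejects. In cases (i) and (ii), the minimality of $f$ together with the special ordering implies that the root $v_i$ (resp.\ the bicoloured neighbours of $v_i$) is already absent from the relevant lists of $C$; since $C$ is connected, \emph{any} homomorphism must map $C$ either entirely into that blue star or entirely outside it. This ``all-or-nothing'' localization licenses deleting the whole set of star leaves from the lists of all vertices of $C$ and rerunning arc consistency and minimum selection from scratch, with termination guaranteed because each round removes at least one vertex of $H$ from at least one list (so at most $|V(H)|\cdot|V(G)|$ rounds). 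Without an argument of this kind --- in particular, without the observation that the connected cycle cannot straddle the boundary of the offending star --- your greedy correction has no invariant to lean on, and the claim that it ``always succeeds when $g$ exists'' is unsupported.
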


\begin{proof}
We describe a polynomial-time algorithm. Suppose $\widehat{G}$ is the input
signed graph; we may assume $\widehat{G}$ is connected, bipartite, and such
that the black vertices have lists with only the black vertices of
$\widehat{H}$, and similarly for the white vertices. The first step is to
perform the arc consistency test for the existence of a homomorphism of the
underlying graphs $G$ to $H$, using the special bipartite min ordering $<_b,
<_w$. We also perform the {\em bicoloured arc consistency} test, which repeatedly
visits bicoloured edges $xy$ of $G$ and removes from $L(x)$ any vertex of $H$
not adjacent to some vertex of $L(y)$ by a bicoloured edge, and similarly
removes from $L(y)$ any vertex of $H$ not adjacent to some vertex of $L(x)$ by
a bicoloured edge. If this yields an empty list, there is no list homomorphism of
the underlying graphs, and hence no list homomorphism of signed graphs.
Otherwise, the minima of all lists define a list homomorphism $f\colon G \to H$ of
the underlying graphs, by~\cite{fv}. By the bicoloured arc consistency test,
the minimum choices imply that the image of a bicoloured edge under $f$ is
also a bicoloured edge. According to Lemma~\ref{alter} and the remark
following it, $f$ is also a  list homomorphism of signed graphs unless a
negative cycle $C$ of unicoloured edges of $\widehat{G}$ maps to a closed walk
$f(C)$ of blue edges in $\widehat{H}$. Now we make use of the properties of
special bipartite min ordering to repair the situation, if possible. Note that 
the fact that we choose minimum possible values for $f$ means that we 
cannot map $C$ lower in the orders $<_b, <_w$. We consider three possible 
cases.

\begin{itemize}
\item \textit{At least one of the edges of $f(C)$ is in a subtree $T$ of type 
$T_2$ rooted at some $v_i,$ with $i \leq d$:}\\
In this case, all edges of $f(C)$ must be in $T$, since the edge of $T$
incident to $v_i$ is bicoloured. Assume without loss of generality that $v_i$
is white, $x$ is the unique child of $v_i$ in $T$, and $xt_1, \ldots, xt_m$
are the blue edges of $T$, where $x$ is black and $t_1, \ldots, t_m$ are
white. Since $f(C)$ is included in the edges $xt_1, \ldots, xt_m$ and $v_i$
precedes in $<_w$ all vertices $t_1, \ldots, t_m$, the final lists of the
white vertices in $C$ do not include $v_i$ (since we assigned the minimum
value in each list). Therefore under any homomorphism the image of the
connected graph $C$ either is included in the set of edges $xt_1, \ldots,
xt_m$, or is disjoint from this set of edges. Since we have already explored
the first possibility, we can delete the vertices $t_1, \ldots, t_m$ from the
lists of all white vertices of $C$ and repeat the arc consistency test. This
will check whether there is possibly another list homomorphism of graphs $G
\to H$, which is also a homomorphism of signed graphs $\widehat{G} \to
\widehat{H}$.

\item \textit{At least one of the edges of $f(C)$ is in a subtree of type $T_4$ 
rooted at some $v_i, i \leq d-1$:}\\
In this case, all edges of $f(C)$ must be in subtrees of type $T_4$ rooted at
the same $v_i$. Assume again, without loss of generality, that $v_i$ is white
and the subtrees consist of the blue edges $v_ix_1, v_ix_2, \ldots, v_ix_m$,
with each $x_j$ black. Since $<_b, <_w$ is a special bipartite min ordering,
all vertices adjacent to $v_i$ by a bicoloured edge are smaller in $<_b$ than
$x_1, \ldots, x_m$. Therefore no such vertex can be in a list of a black
vertex in $C$. This again means that the image of $C$ is either included in
the set of edges $v_ix_1, v_ix_2, \ldots, v_ix_m$, or is disjoint from this
set of edges. We can delete all vertices $x_1,\ldots,x_m$ from the lists of
all black vertices of $C$ and repeat as above.

\item \textit{The edges of $f(C)$ are included in the set of edges on the
 path $v_dv_{d+1}\ldots v_k$ and in the subtrees of types $T_3$ or $T_4$ 
 rooted at $v_d, \ldots, v_k$:}\\
In this case, the vertices in the lists of the cycle $C$ are joined only by blue 
edges, and there is no homomorphism of signed graphs $\widehat{G} \to \widehat{H}$.
\end{itemize}

After we modified the image of one negative cycle $C$ of $\widehat{H}$, we
proceed to modify another, until we either obtain a homomorphism of signed graph,
or find that no such homomorphism exists. The algorithm is polynomial, because
arc consistency can be performed in linear time~\cite{fv}, and each modification
removes at least one vertex of $H$ from the list of at least one vertex of
$G$. Recall that the graph $H$ is fixed, and hence its number of vertices is a
constant $k$. If $G$ has $n$ vertices, then this step will be performed at most
$kn$ times.
\qed \end{proof}

\section{Reflexive trees} \label{sec:reflexive}

We now turn to reflexive trees, and hence in this section, $\widehat{H}$ will
always be a reflexive tree. We may have red, blue, or bicoloured loops, but we
may again assume that all non-loop unicoloured edges are of the same colour (blue
or red). 

\begin{lemma}\label{raining}
If $\widehat{H}$ contains one of the reflexive trees from the family $\cal G$ in
Figure~\ref{fig:forbgraphsref} as an induced subgraph, then
$\textsc{List-S-Hom}(\widehat{H})$ is NP-complete.
\end{lemma}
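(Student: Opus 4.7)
The plan mirrors the proof of Lemma~\ref{pejsek}: for each signed graph in the family $\cal G$ of Figure~\ref{fig:forbgraphsref}, I would exhibit a chain $(U,D)$ and invoke Theorem~\ref{thm:chain}, obtaining NP-completeness of $\textsc{List-S-Hom}$ for that forbidden subgraph. Since $\widehat{H}$ contains it as an induced subgraph, NP-completeness of $\textsc{List-S-Hom}(\widehat{H})$ follows via the induced-subgraph reduction remarked on in Section~\ref{sec:basicfacts} (lists of input vertices can be restricted to the vertex set of the forbidden subgraph).

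First I would walk through each configuration in Figure~\ref{fig:forbgraphsref} and read off a pair of equal-length walks $U\colon u=u_0,u_1,\ldots,u_k=v$ and $D\colon u=d_0,d_1,\ldots,d_k=v$ satisfying the conditions defining a chain: the first edge of $U$ and the last edge of $D$ unicoloured; the first edge of $D$ and the last edge of $U$ bicoloured; and at every internal step either clause~(1) (matching ordinary edges with a missing diagonal) or clause~(2) (matching bicoloured edges with a missing bicoloured diagonal). The reflexive setting offers a device absent in Lemma~\ref{pejsek}: a loop at a vertex $x$ provides a closed walk of length one whose colour (unicoloured or bicoloured) can be used either as a terminal edge of the chain or as a \emph{stay-in-place} step that equalises the lengths of $U$ and $D$. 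This flexibility is what lets us realise the required colour asymmetry between $U$ and $D$ when $\widehat{H}$ has only a handful of non-loop edges: one walk lingers at a loop of the appropriate colour while the other advances along a non-loop edge.

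The main obstacle will be verifying the forbidden-diagonal conditions for each chain: for every internal $i$ one must check that $d_i u_{i+1}$ is a non-edge of $\widehat{H}$ (clause~1) or a non-bicoloured edge (clause~2). Because the underlying graph of $\widehat{H}$ is a tree, non-adjacency between non-neighbouring vertices is automatic and so clause~(1) is easy to enforce by keeping $U$ and $D$ in different branches after they diverge; the delicate point is clause~(2) at indices where $d_i$ carries a loop, since then $d_i u_{i+1}$ could accidentally be a bicoloured edge (namely the loop, when $d_i=u_{i+1}$, or a bicoloured tree edge incident to $d_i$). I would arrange the walks so that $d_i\ne u_{i+1}$ at those indices and so that the loop at $d_i$, if present, has a colour incompatible with the clause being used. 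Once each chain is drawn correctly, as indicated in Figure~\ref{fig:forbgraphsref}, Theorem~\ref{thm:chain} closes the argument.
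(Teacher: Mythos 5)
Your plan to read a chain off the figure for \emph{every} member of $\cal G$ and invoke Theorem~\ref{thm:chain} does not cover all the cases; the paper's proof in fact needs three different tools. First, a chain requires the bicoloured edges $ud_1$ and $u_{k-1}v$ by definition, so no chain can exist inside a signed graph that has no bicoloured edge; case a) of $\cal G$ (used elsewhere in the paper to forbid a blue loop and a red loop joined without an intervening bicoloured loop) is of this kind. The paper disposes of a), b) and c) by observing that they are themselves s-cores with more than two edges, so already $\textsc{S-Hom}$ — and hence $\textsc{List-S-Hom}$ — is NP-complete for them by Theorem~\ref{thm:dichotomy}; no chain is needed or, for a), available. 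Second, and more seriously, case m) is not handled by a chain either: the paper gives a separate reduction from {\sc Not-All-Equal SAT} via a bespoke tree gadget $(T,\sigma)$ (Figure~\ref{fig:ref_m}) whose three leaves are identified with the literals of a clause, and whose correctness argument tracks which endpoints are switched and whether an internal vertex maps to $2$ or to $4$. This is a genuinely different reduction with no counterpart in the chain machinery, and the caption of Figure~\ref{fig:forbgraphsref} is explicit that only \emph{some} chains are indicated.

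For the cases c) through l) your approach does coincide with the paper's: exhibit the chain drawn in the figure, apply Theorem~\ref{thm:chain}, and finish with the induced-subgraph reduction from Section~\ref{sec:basicfacts}. Your observations about using loops as stay-in-place steps and about where the non-edge/non-bicoloured-edge conditions can fail are sensible for those cases. But as written the proposal is incomplete: it offers no argument for a) and b), where the s-core argument (not a chain) is what works, and none for m), where a custom {\sc Not-All-Equal SAT} gadget is required.
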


\begin{figure}
\centering
\includegraphics[scale=0.85]{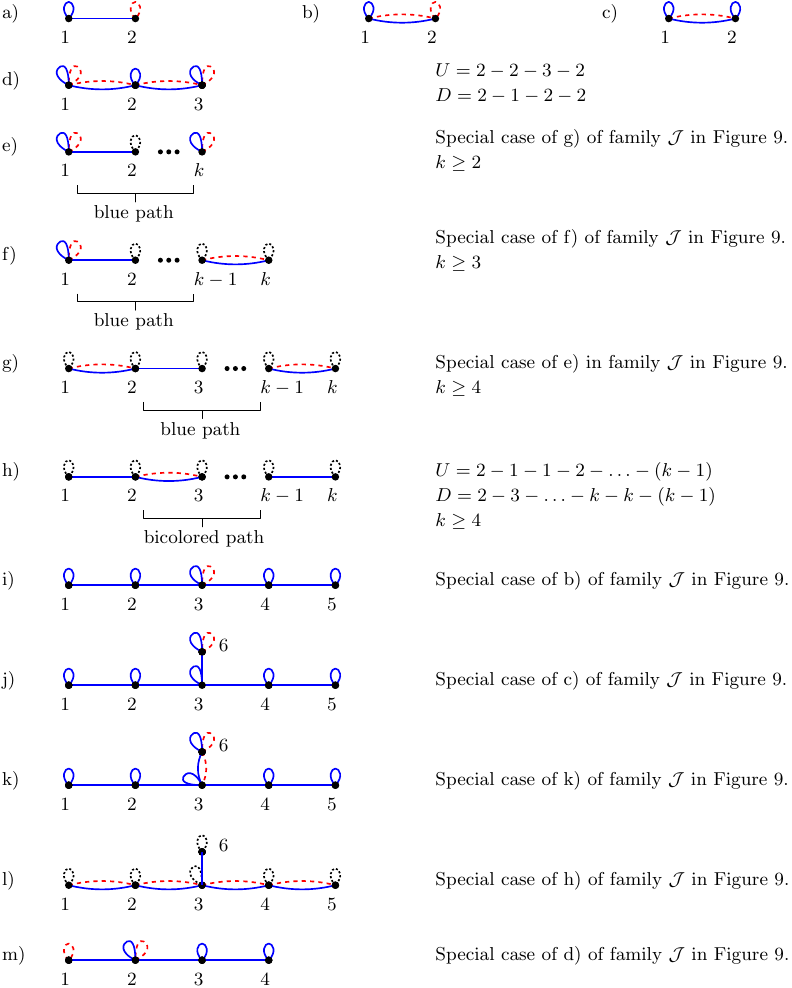}
\caption{The family $\cal G$ of signed reflexive trees with NP-complete problems. 
(The dotted loops can be either blue, red or bicoloured.)}
\label{fig:forbgraphsref}
\end{figure}

\begin{proof} 
The signed trees in a), b) and c) are themselves s-cores with more than two
edges, so it follows from Theorem~\ref{thm:dichotomy} that they yield NP-complete
problems. The signed trees in d), f), g), and h) have chains indicated in Figure~\ref{fig:forbgraphsref},
and hence also yield NP-complete problems by Theorem~\ref{thm:chain}. The remaining cases
are again handled in more general context in the next section, as indicated in Figure~\ref{fig:forbgraphsref}.

\qed \end{proof}

The next lemma is used to prove that in all polynomial cases $\widehat{H}$ is a caterpillar.  
Although this  section is restricted to reflexive graphs, we will prove it in greater generality 
for future use in a later section. To that end let $F_2$ be the graph in Figure~\ref{fig:bipclaw} 
where each loop on the three leaves may or may not be present.  Thus, $F_2$ represents 
a family of graphs, but we will abuse notation and simply refer to $F_2$ as any member 
of that family.

\begin{lemma} \label{lem:bipclaw}
If the underlying graph $H$ contains the graph $F_2$ in
Figure~\ref{fig:bipclaw}, then the problem $\textsc{List-S-Hom}(\widehat{H})$ is
NP-complete.
\end{lemma}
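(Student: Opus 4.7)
The plan is to show that the induced signed subgraph $\widehat{F_2}$ of $\widehat{H}$ on the seven vertices of $F_2$ already has NP-complete $\textsc{List-S-Hom}(\widehat{F_2})$; NP-completeness then propagates to $\widehat{H}$ by the induced subgraph reduction observed after Theorem~\ref{thm:dichotomy}. Since the non-loop edges of $F_2$ form a tree, the subgraph $\widehat{F_2}$ restricted to its non-loop edges is uni-balanced, and a preliminary switching lets me assume every non-loop unicoloured edge of $\widehat{F_2}$ is blue. The three leaf loops are unaffected by switching and may be absent, blue, red, or bicoloured. The essential structural input is the characterization of \cite{feder2003bi}: the bipartite claw $F_2$, with any such loop pattern on its three leaves, is not a bi-arc graph.

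I would then split into cases based on the leaf loops. In the uniform case, when each leaf loop is either absent or blue (respectively, absent or red) and no loop is bicoloured, the signed graph $\widehat{F_2}$ has no bicoloured edges or loops and is balanced (respectively, anti-balanced); since its underlying graph is not a bi-arc graph, Theorem~\ref{thm:bordeaux} (respectively, its symmetric counterpart obtained by globally exchanging blue and red) immediately yields NP-completeness of $\textsc{List-S-Hom}(\widehat{F_2})$.

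The main obstacle is the remaining configurations, in which the leaf loops have mixed unicoloured signs, or at least one leaf loop is bicoloured. For each such configuration, the strategy is to exhibit a chain $(U,D)$ inside $\widehat{F_2}$ and invoke Theorem~\ref{thm:chain}. Natural candidates for $U$ and $D$ are the two length-four paths that start at one leaf (using its loop if need be), climb through a middle vertex, cross the central vertex, and descend through another middle vertex to a second leaf, while a third variant walk uses the remaining leg of the claw; the bicoloured versus unicoloured character of $U$ and $D$ is then controlled by choosing which leaf carries the bicoloured loop and where the unicoloured edges of the chain are placed. The delicate part of the verification is the non-edge condition $d_iu_{i+1}\notin E(\widehat{H})$ in the definition of chain, which relies crucially on $F_2$ being an \emph{induced} subgraph of $\widehat{H}$ so that no shortcut edges among the middle or leaf vertices can spoil the required structure. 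Once such a chain is produced in each residual case, Theorem~\ref{thm:chain} finishes the proof.
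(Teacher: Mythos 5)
Your opening moves match the paper's: pass to the copy of $\widehat{F}_2$, switch so the non-loop unicoloured edges are blue, note that $F_2$ with any leaf-loop pattern is not a bi-arc graph, and dispose of the ``no bicoloured edges or loops'' configuration via Theorem~\ref{thm:bordeaux}. (Even there your case split is slightly off: you condition only on the three \emph{leaf} loops, but the mandatory loops at the centre and at the three middle vertices, and the non-loop edges themselves, can also be red or bicoloured, so ``each leaf loop is absent or blue and no loop is bicoloured'' does not by itself put you in the situation where Theorem~\ref{thm:bordeaux} applies.) The decisive problem is your plan for all remaining configurations: you propose to exhibit a chain in each one and invoke Theorem~\ref{thm:chain}. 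This cannot work in general. A chain requires its first edge $uu_1$ to be unicoloured, so a copy of $\widehat{F}_2$ in which \emph{every} edge and loop is bicoloured contains no chain at all --- yet that configuration falls into your ``residual'' bucket and is still NP-complete (a blue-edged input then sees exactly the unsigned problem $\textsc{List-Hom}(F_2)$). The paper covers such cases by a separate reduction: whenever all unicoloured loops are blue, restrict inputs to blue edges and reduce $\textsc{List-Hom}(F_2)$ directly.

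More importantly, even after that reduction the hardest surviving case --- all three edges $ci^+$ bicoloured, with unicoloured loops of both colours present --- is not handled by a chain in the paper either. There the paper builds $(i,I,j,J)$-choosers (explicit $7$- and $11$-vertex list-labelled paths adapted from the argument of~\cite{feder1998list}), concatenates them, and reduces from {\sc Not-All-Equal SAT}; this construction is the bulk of the proof and has no counterpart in your proposal. Even the intermediate step, where some edge $ci^+$ is unicoloured and one locates a member of the family $\mathcal{G}$, is not purely a chain argument: case m) of $\mathcal{G}$ is proved NP-complete by a bespoke NAE-SAT gadget, not by Theorem~\ref{thm:chain}. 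So ``exhibit a chain in each residual case'' is not merely unverified --- it is false for at least one configuration and, on the evidence of the paper's own proof, insufficient for the central one. You would need to supply the chooser-style reduction (or some substitute) to close the argument.
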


\begin{figure}
\centering
\includegraphics[scale=1]{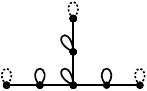}
\caption{The subgraph $F_2$.}
\label{fig:bipclaw}
\end{figure}

\begin{proof}
Deciding if there exists a list homomorphism (of an unsigned graph) to the graph
$F_2$ is NP-complete, as stated in~\cite{feder2003bi} (and proved using results 
in~\cite{feder1999list} and~\cite{feder2007bitr}). It would be natural to attempt a 
direct reduction of $\textsc{List-Hom}(F_2)$ to $\textsc{List-S-Hom}(\widehat{H})$,
as we have done here for the proof of Lemma~\ref{lem:tripleclaw}. However, this 
is complicated by the fact that the loops in $\widehat{H}$ can be red, blue, or 
bicoloured. Therefore, below we proceed on a different path, adapting to our setting 
the proof of the reflexive case from from~\cite{feder1998list} (see Theorem 2.3 
in that paper).

Suppose that $\widehat{F}_2$ is a subgraph of $\widehat{H}$ with underlying
graph $F_2$, and suppose that $\widehat{F}_2$ has been switched so that all
non-loop edges are at least blue. Label the leaves of $\widehat{F}_2$ by
$0, 1, 2$, and their respective neighbours by $0^+,1^+,2^+$, and finally label 
the central vertex by $c$.

If all the unicoloured loops in $\widehat{F}_2$ are blue, then we may restrict
the input to blue (there is no advantage to switching).  The NP-complete problem 
$\textsc{List-Hom}(F_2)$ \cite{feder2003bi} reduces to 
$\textsc{List-S-Hom}(\widehat{H})$.  

Similarly, if all unicoloured loops in $\widehat{F}_2$ are red, then we can switch all non-loop
edges to red and apply the same logic. Thus we assume that there are both blue and red 
unicoloured loops in $\widehat{F}_2$. 

We first prove that if some edge $ci^+$,  $i \in \{0, 1, 2 \}$ is not bicoloured, 
then $\textsc{List-S-Hom}(\widehat{F}_2)$ is NP-complete by showing that the 
copy of $\widehat{F}_2$ contains a  member of the family $\mathcal{G}$ or 
$\mathcal{J}$.  

Note, any path between a blue loop and a red loop must have a vertex with a
bicoloured loop; otherwise, $\widehat{F}_2$ contains a) or b) from family $\mathcal{G}$.
Thus at least one of $c, 0^+, 1^+, 2^+$ has a bicoloured loop.

Next, if none of the edges $ci^+$ are bicoloured, then we either have a copy of 
c) from family $\mathcal{J}$ when there is a bicoloured loop at some $i^+$ or a 
copy of b) when there is bicoloured loop at $c$.  If one of the $ci^+$ edges is 
bicoloured, then we have a copy of k) from family $\mathcal{J}$.
Finally if two of the edges are bicoloured, then we have a copy of h) from family 
$\mathcal{J}$.  (We note that the chain in h) is applicable even if the edges $12$ 
or $45$ are unicoloured.)  Thus, all edges $ci^+$ are bicoloured.

We now finish the proof using a modification of the proof in~\cite{feder1998list}.
Given distinct $i$ and $j$ in  $\{0,1,2\}$ and distinct subsets $I$ and $J$ of
$\{ 0, 1, 2 \}$, an \emph{$(i,I,j,J)$-chooser} is a path $\widehat{P}$ with
endpoints $a$ and $b$, together with a list assignment $L$, such that the
following statement holds. For each list homomorphism $f$ from $\widehat{P}$ 
to $\widehat{F}_2$, either $f(a) = i$ and $f(b) \in I$ or $f(a) = j$ and $f(b)
\in J$.  Moreover, for each $i' \in I$ and $j' \in J$, there are list
homomorphisms $g_1, g_2$ from $\widehat{P}$ to $\widehat{F}_2$ such that
$g_1(a) = i, g_1(b) = i'$ and $g_2(a) = j, g_2(b) = j'$.

Suppose $\widehat{P}$ is a $(0, \{0,1\}, 1, \{1,2\})$-chooser, $\widehat{P}'$
is $(0, \{1,2\}, 1, \{2,0\})$-chooser, and $\widehat{P}''$ is a $(0, \{2,0\},
1, \{0,1\})$-chooser.  Let $\widehat{T}$ be the tree obtained by identifying
the $b$ vertices in the three choosers and labelling the leaves respectively
as $a, a', a''$.  It is easy to verify that $\widehat{T}$ admits a
list-homomorphism to $\widehat{F}_2$ if, and only if, the triple $(a,a',a'')$
does not map to either $(0,0,0)$ or $(1,1,1)$. Consequently, we can reduce an
instance of {\sc Not-All-Equal SAT} to $\textsc{List-S-Hom}(\widehat{F}_2)$.
For each clause in the instance, create a copy of $\widehat{T}$ and identify
the vertices $(a, a', a'')$ with the three literals in the clause.

It remains to construct the choosers. First, we build a 
$(0, \{0, 2 \}, 1, \{ 1, 2 \})$-chooser.  By symmetry we then have 
$(i, \{ i, k \}, j, \{ j, k \})$-choosers for any distinct $i, j, k \in \{ 0, 1, 2 \}$.  Let $Q$ be a 
path on $q_0, q_1, \dots, q_{10}$ with lists 
$$
\begin{array}{lcl}
L(q_0) = \{ 0, 1 \}             & \hspace{1cm} & L(q_6) = \{ 0^+, 2^+, 1 \} \\
L(q_1) = \{ 0^+, 1^+ \}     & \hspace{1cm} & L(q_7) = \{ 0^+, c, 1^+ \} \\
L(q_2) = \{ 0, 1^+ \}         & \hspace{1cm} & L(q_8) = \{ 0, 2^+, 1 \} \\
L(q_3) = \{ 0^+, c, 1^+ \} & \hspace{1cm} & L(q_9) = \{ 0^+, 2^+, 1^+ \} \\
L(q_4) = \{ 0, 2^+, 1 \}     & \hspace{1cm} & L(q_{10}) = \{ 0, 2, 1 \} \\
L(q_5) = \{ 0^+, 2, 1^+ \} \\
\end{array}
$$

The path $\widehat{Q}$ has all edges blue.  In mapping $\widehat{Q}$ to $\widehat{F_2}$
first suppose $q_0$ maps to $0$.  Then $q_{10}$ either maps to $0$, in which case the
loop $0^+$ is traversed twice, or $q_{10}$ maps to $2$, in which case the loop at $0^+$
and the loop at $2^+$ are each traversed once.  In the both cases 
if the loop at $0^+$ is unicoloured red, then switch at $q_6$. In the latter case, 
if there is a red loop at $2^+$, then we switch at $q_8$. Note in the latter case the
bicoloured edges $0^+c$ and $c2^+$ allow the edges $q_6 q_7$ and $q_7 q_8$ 
to be of either colour. A similar reasoning shows 
$\widehat{Q}$ can map to $\widehat{F_2}$ with $q_0$ mapping to $1$ and 
$q_{10}$ mapping to either $1$ or $2$ but not to $0$.  Thus $\widehat{Q}$ is a 
$(0, \{ 0, 2 \}, 1, \{ 1, 2 \})$-chooser.

The $( 0, \{0\}, 1, \{2\} )$-chooser $\widehat{R}$ is a path with vertices 
$r_0, \dots, r_6$ and lists $$\{ 0, 1 \}, \{ 0^+, 1^+ \}, \{ 0, 1^+ \}, \{ 0^+, c \}, 
\{ 0, 2^+ \}, \{ 0^+, 2^+ \}, \{ 0, 2 \}.$$ All edges are blue.  When $\widehat{R}$
maps to the edge $00^+$, no switching is required as $00^+$ is at least blue.  
When $\widehat{R}$ maps to the path $1,1^+,1^+,c,2^+,2^+,2$, switching at 
$r_2$ (respectively $r_4$) is required when there is a unicoloured red loop at 
$1^+$ (respectively $2^+$).

The required choosers are defined as follows. First, $\widehat{P}$ is the $(0,
\{ 0 \}, 1, \{ 2 \})$-chooser followed by the $(0, \{ 0, 1 \}, 2, \{ 1, 2
\})$-chooser.  Next $\widehat{P}'$ is the concatenation of the $(0, \{0\}, 1,
\{ 2 \})$-chooser, the $(0, \{ 1 \}, 2, \{ 2 \})$-chooser, the $(1, \{ 1 \},
2, \{ 0 \})$-chooser, and the $(1, \{ 1, 2 \}, 0, \{ 0, 2 \})$-chooser. 
Finally $\widehat{P}''$ is the concatenation of the $(0, \{ 2 \}, 1, \{ 1
\})$-chooser and the $(2, \{0,2\}, 1, \{0,1\})$-chooser.
\qed \end{proof}

A tree $H$ is a {\em caterpillar} if it contains a path $P=v_1 \ldots v_k$ such 
that each vertex of $H$ is on $P$ or is adjacent to a vertex of $P$. Note
that the path $P$, which we again call the {\em spine} of $H$, is not unique,
and we sometimes make it explicit by saying that $H$ is a caterpillar {\em
with spine $P$}. A vertex $x$ not on $P$ is adjacent to a unique neighbour
$v_i$ on $P$, and we call the edge $v_ix$ (with the loop at $x$) {\em the subtree rooted at $v_i$}. A
vertex on the spine can have more than one subtree rooted at it. 
We say that a signed graph $\widehat{H}$ whose underlying graph $H$ is a reflexive caterpillar 
is a {\em good caterpillar with respect to the spine $v_1 \ldots v_k$} if the bicoloured edges of 
$\widehat{H}$ form a connected subgraph, the unicoloured non-loop edges all have the same 
colour $c$, and there exists an integer $d$, with $1 \leq d \leq k$, such that
\begin{itemize}
\item all edges on the path $v_1v_2 \ldots v_d$ are bicoloured, and all
edges on the path $v_dv_{d+1} \ldots v_k$ are unicoloured with colour $c$, 
\item all loops at the vertices $v_1, \ldots, v_{d-1}$ and all non-loop
edges of the subtrees rooted at these vertices are bicoloured,
\item all loops at the vertices $v_{d+1}, \ldots, v_k$ and all edges
and loops of the subtrees rooted at these vertices are 
unicoloured with colour $c$,
\item if $v_d$ has a bicoloured loop, then all children of $v_d$ 
with bicoloured loops are adjacent to $v_d$ by bicoloured edges,
\item if $v_d$ has a unicoloured loop of colour $c$, then all children 
of $v_d$ have unicoloured loops of colour $c$, and are adjacent 
to $v_d$ by unicoloured edges, and
\item if $d < k$, then the loops of all children of $v_d$ adjacent 
to $v_d$ by unicoloured edges also have colour $c$.
\end{itemize}
The vertex $v_d$ will again be called the {\em dividing vertex}. We also say
that $\widehat{H}$ is a good caterpillar {\em with preferred colour $c$}.
Figure~\ref{fig:example_good_ref} (on the left) shows an example of good caterpillar with
preferred colour blue. We emphasize that in the case $d=k$ 
(depicted in Figure~\ref{fig:example_good_ref} on the right), it
is possible (if $v_d$ has a bicoloured loop) that $v_d$ has some children with 
red loops and some with blue loops, adjacent to $v_d$ by unicoloured edges.

\begin{figure}[t]
\centering
\includegraphics[scale=1]{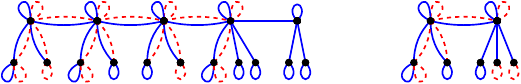}
\caption{Two good caterpillars with preferred colour blue: with $d < k$ (left), with $d=k$ (right).}
\label{fig:example_good_ref}
\end{figure}

Let ${\cal G}$ be the family of signed graphs depicted in
Figure~\ref{fig:forbgraphsref}, together with the family of complementary
signed graphs where all unicoloured edges and loops are red, rather than blue,
and vice versa. Note that the complementary signed graphs are not switching
equivalent to the original signed graphs because switching does not change the
colour of loops.

\begin{lemma} \label{lem:forbref}
Let $\widehat{H}$ be a reflexive signed tree. Then $\widehat{H}$ is a good 
caterpillar if and only if it does not contain any of the graphs in the 
family ${\cal G}$ as an induced subgraph, and the underlying graph $H$ 
does not contain the graph $F_2$.
\end{lemma}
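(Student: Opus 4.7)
The plan is to mirror the structure of Lemma~\ref{lem:forbirref}, handling both directions of the equivalence.

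For the forward direction, I would inspect each member of ${\cal G}$ and the graph $F_2$ and verify directly from the definition that none of them can arise as the relevant kind of subgraph in a good caterpillar. The members of ${\cal G}$ each violate at least one of the structural constraints of a good caterpillar: (a)-type configurations force the bicoloured subgraph to be disconnected; certain members force two unicoloured non-loop edges of different colours to coexist; others place a bicoloured edge ``past'' the dividing vertex on the spine, or attach a forbidden combination of loops at (or adjacent to) $v_d$. The absence of $F_2$ is needed because a good caterpillar cannot contain three internally disjoint paths of length two from a common center.

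For the backward direction, assume that $\widehat{H}$ avoids all graphs in ${\cal G}$ as induced subgraphs and that the underlying $H$ does not contain $F_2$. First, since $H$ is a reflexive tree with no $F_2$, a standard argument shows $H$ is a caterpillar; choose the spine $P = v_1v_2\ldots v_k$ to be a longest path. Next, I would show that all unicoloured non-loop edges have a common colour (say blue, after possible global recolouring): otherwise we would isolate a short subgraph containing both a red non-loop edge and a blue non-loop edge, and combined with the reflexive loops we would produce a member of ${\cal G}$ (specifically the cases that are themselves s-cores of size $>2$, parts (a)--(c)). Then I would show that the bicoloured edges of $\widehat{H}$ induce a connected subgraph, since a disconnected bicoloured subgraph would again produce a forbidden chain configuration from ${\cal G}$.

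Having established these global facts, I would choose the dividing vertex $v_d$ as the largest index such that the spine edge $v_{d-1}v_d$ is bicoloured (with $d = 1$ if no spine edge is bicoloured, and the bicoloured subgraph then attached only at $v_1$). The avoidance of graphs (d)--(i) forces: the spine to split cleanly into a bicoloured prefix $v_1\ldots v_d$ and a unicoloured suffix $v_d\ldots v_k$; all loops on $v_1,\ldots,v_{d-1}$ and all subtree edges rooted at those vertices to be bicoloured; and all loops on $v_{d+1},\ldots,v_k$ and their subtree edges and loops to be unicoloured of the preferred colour. The remaining graphs (j)--(m) of ${\cal G}$ pin down the constraints at the dividing vertex $v_d$: a bicoloured loop at $v_d$ forces children with bicoloured loops to be attached by bicoloured edges; a unicoloured loop at $v_d$ forces all children to carry unicoloured loops of the same colour reached by unicoloured edges; and if $d<k$ then children of $v_d$ attached by unicoloured edges must have unicoloured loops of the preferred colour.

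The main obstacle will be the case analysis at $v_d$, because the definition of a good caterpillar distinguishes the subcases $d<k$ and $d=k$, and allows considerable flexibility in the latter (children with either red or blue unicoloured loops may coexist at $v_d$ when $v_d$ has a bicoloured loop). I would handle it by enumerating the possible loop-at-$v_d$ types (bicoloured, unicoloured of preferred colour, unicoloured of opposite colour) and for each enumerating the possible loop/edge types on a child of $v_d$, in every case exhibiting a concrete member of ${\cal G}$ containing the offending configuration as an induced subgraph. This matches the pattern used in Lemma~\ref{lem:forbirref}, where the corresponding restrictions at the irreflexive dividing vertex were read off from the family ${\cal F}$.
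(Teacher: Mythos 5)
Your overall strategy is the paper's: the forward direction by direct inspection of ${\cal G}$ and $F_2$, and the backward direction by reading off each clause of the good-caterpillar definition from the absence of specific members of ${\cal G}$ (connectivity of the bicoloured part, the split of the spine at $v_d$, the loop conditions on each side, and the constraints at $v_d$ itself). However, two steps in your plan would not go through as described. First, you propose to derive the fact that all unicoloured non-loop edges share a colour from forbidden subgraphs, citing the members of ${\cal G}$ that are s-cores. This cannot work: a reflexive path with one red and one blue non-loop edge and all blue loops contains no member of ${\cal G}$, yet literally violates the definition of a good caterpillar; it is nevertheless switching equivalent to an all-blue path. The correct move, which the paper makes as a standing normalization at the start of the reflexive-tree section, is that since a tree has no cycles of non-loop edges, one may switch so that all unicoloured non-loop edges have a common colour (switching does not affect loops or the unicoloured/bicoloured distinction), and the lemma is to be read modulo this normalization together with the red-blue complementation built into the extended family ${\cal G}$.

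Second, fixing the spine to be ``a longest path'' and then, for every offending local configuration, ``exhibiting a concrete member of ${\cal G}$'' conflates two different things. Being a good caterpillar is an existential statement over spines, so a configuration that is bad relative to your chosen spine need not yield a forbidden induced subgraph --- it may simply demand a different spine. The paper's proof re-chooses the spine in several places for exactly this reason: when $v_1=v_d$ and exactly one leaf of $v_1$ carries a bicoloured loop, that leaf is promoted to be the first spine vertex; when there is a single bicoloured loop or a single bicoloured non-loop edge, families i), j), k) are used to show the spine can be made to begin there; and when $d<k$, case m) is used to show the spine can be re-chosen so that no leaf child of $v_d$ has a red loop. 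Your case analysis at $v_d$ must therefore be phrased as ``either this configuration contains a member of ${\cal G}$, or the spine can be modified to absorb it,'' and without that second branch the argument proves something false (namely that certain good caterpillars contain forbidden subgraphs). With these two repairs your plan coincides with the paper's proof.
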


\begin{proof}
It is easy to see that none of the signed reflexive trees in Figure~\ref{fig:forbgraphsref} 
is a good caterpillar. By symmetry, the same is true for their complementary signed graphs. 
It is also clear that the graph $F_2$ (from Figure~\ref{fig:bipclaw}) is not a caterpillar. 
We proceed to show that if the signed reflexive trees from family ${\cal G}$ in 
Figure~\ref{fig:forbgraphsref} are excluded as induced subgraphs, then $\widehat{H}$ 
is a good caterpillar with preferred colour blue. (The complementary exclusions 
produce a good caterpillar with preferred colour red.) 
Since the graphs g) are absent, the bicoloured non-loop edges induce a
connected subgraph. The exclusion of family h) similarly ensures
that all unicoloured non-loop edges induce a connected subgraph. 
By grouping all bicoloured non-loop edges before all unicoloured non-loop edges, 
we conclude that there exists a spine $P=v_1 \ldots v_k$, and a dividing vertex 
$v_d$.  Thus, all edges between $v_1, \dots, v_{d-1}$, and (since l) is excluded) 
all edges to their children, are bicoloured. The exclusion of b) and c) ensures 
each bicoloured non-loop edge has a bicoloured loop on (at least) one of its 
endpoints.  Forbidding the family d) ensures the vertices $v_1, \dots, v_{d-1}$ 
all have bicoloured loops.

The subgraph induced by $v_{d+1}, \dots, v_k$ and their children must contain only 
blue edges since the edge $v_{d}v_{d+1}$ is blue and the bicoloured edges induce 
a connected subgraph. Forbidding a), e), and f) implies all the loops in this subgraph 
are also blue. (Recall that when we say blue we always mean unicoloured blue.) 

Now we distinguish two cases. If $v_d$ has a blue loop then by excluding families 
a), b), c) and d) we conclude that all edges to its children are blue and all loops of 
its children are also blue. In the case $v_1 = v_d$, if there is a bicoloured loop on 
exactly one leaf of $v_1$ (respectively $v_k$), we renumber the vertices so that
this leaf becomes the first vertex of the spine, $v_1$. (If it was a leaf of $v_1$, this
involves a small shift of subscripts, if it was a leaf of $v_k$, it also involves a reversal
of the ordering of subscripts.) 

Now suppose that $v_d$ has a bicoloured loop. Excluding family e) ensures that any 
child of $v_d$ with a bicoloured loop must be adjacent to $v_d$ by a bicoloured edge. 

Finally if $d < k$, case m) implies that we can choose the spine so that no child of 
$v_{d}$ has a red loop.

Families i), j), and k) ensure when there is a single bicoloured loop or a
single bicoloured non-loop edge, the spine can be chosen to begin with 
this loop or edge.
\qed \end{proof}

\begin{theorem}
Let $\widehat{H}$ be a reflexive tree. If $\widehat{H}$ is a good caterpillar,
then the problem $\textsc{List-S-Hom}(\widehat{H})$ is polynomial-time
solvable. Otherwise, $H$ contains $F_2$ from Figure~\ref{fig:bipclaw}, or
$\widehat{H}$ contains one of the signed graphs in family ${\cal G}$ as an
induced subgraph, and the problem is NP-complete.
\end{theorem}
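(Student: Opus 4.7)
The NP-completeness half of the theorem is immediate: if $\widehat{H}$ is not a good caterpillar, then by Lemma~\ref{lem:forbref} either the underlying graph $H$ contains $F_2$, in which case Lemma~\ref{lem:bipclaw} applies, or $\widehat{H}$ contains a member of $\cal G$ as an induced subgraph, in which case Lemma~\ref{raining} applies. The substantive content therefore lies in producing a polynomial-time algorithm when $\widehat{H}$ is a good caterpillar. The plan is to mirror the two-step argument from Section~\ref{sec:irreflexive}, namely the pair of Lemmas~\ref{mino} and~\ref{minp}: first a structural lemma asserting the existence of a \emph{special min ordering} of $\widehat{H}$, then an algorithmic lemma consuming such an ordering.

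Since the underlying reflexive caterpillar $H$ is an interval graph, it admits a min ordering in the sense of~\cite{fv} (a single linear ordering rather than a pair, since $H$ is not bipartite). I would build one explicitly by placing the spine as $v_1 < v_2 < \cdots < v_k$ and, for each $v_i$, inserting its non-spine children on one side of $v_i$, with all children joined to $v_i$ by bicoloured edges appearing before all children joined by unicoloured edges, and, among the latter, with children having unicoloured loops last. The good-caterpillar conditions, especially the three conditions involving the dividing vertex $v_d$, are exactly what is needed to verify that this ordering remains a min ordering of $H$ while additionally being \emph{special}, in the sense that for every vertex $v$ the bicoloured neighbours of $v$ precede its unicoloured neighbours. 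With such an ordering in hand, the algorithm runs arc consistency and bicoloured arc consistency on the input $(\widehat{G}, L)$; if no list empties, set $f(v) = \min L(v)$. By~\cite{fv} this is a list homomorphism of the underlying graphs, and specialness together with bicoloured arc consistency guarantee that $f$ sends bicoloured edges to bicoloured edges. What remains, by Lemma~\ref{alter}, is the sign condition for closed unicoloured walks of $\widehat{G}$ whose images are also unicoloured.

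When the sign condition fails, the repair step proceeds by case analysis on the location of the offending image walk in $\widehat{H}$, as in the proof of Lemma~\ref{minp}: the image is either contained in a subtree on the bicoloured side, or in the unicoloured side beyond $v_d$, or involves a leaf-loop configuration at $v_d$ that must be ruled out. In each case the offending vertices are removed from the appropriate lists and arc consistency is rerun; since $|V(H)|$ is constant and each repair strictly shrinks some list, polynomially many iterations suffice.

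The main obstacle, absent in the irreflexive setting, is that closed unicoloured walks in a reflexive tree need not come from cycles of $G$ mapped onto non-loop edges: they can arise from loops of $\widehat{G}$ whose images pick up sign from loops of $\widehat{H}$, and from back-and-forth walks that enter loops. Consequently the case analysis at and near $v_d$ is the most delicate point; I expect the last three bullet points in the definition of good caterpillar to be used precisely here, since $v_d$ is the one place where bicoloured and unicoloured structure genuinely meet, and these conditions ensure that any sign violation discovered after taking minima can be localised to one of the standard three cases used in Lemma~\ref{minp}, augmented with a bookkeeping clause that tracks red loops of $\widehat{G}$ against the available loops of $\widehat{H}$.
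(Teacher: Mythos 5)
Your NP-completeness half is exactly the paper's argument (Lemmas~\ref{lem:forbref}, \ref{lem:bipclaw}, and \ref{raining}), and your overall plan for the polynomial half --- a special min ordering of the reflexive caterpillar, arc consistency plus bicoloured arc consistency, taking minima, then iteratively repairing sign violations --- coincides with the paper's Lemma~\ref{minoo} and the algorithm that follows it. Your first two repair cases also match: when the image walk touches a vertex $y \leq w$ it must collapse to the single loop at $y$, so $y$ can be deleted from the lists and arc consistency rerun; and when the image lies entirely in the all-blue part with $d<k$ the instance is rejected.

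The genuine gap is the case $d=k$. There the definition of a good caterpillar explicitly permits $v_d$ to have pendant children with red loops \emph{and} pendant children with blue loops (attached by unicoloured edges), and a unicoloured cycle of $\widehat{G}$ can map to a closed walk that passes through $v_d$ and visits leaves of both loop colours. Repairing the sign of such a walk is not a matter of deleting ``offending vertices'' from lists: one must decide, simultaneously for all such cycles, which vertices mapping to $v_d$ (the \emph{boundary points}) get switched and which loop colour each connected \emph{region} of the preimage is sent to, and these Boolean decisions interact globally --- a greedy commitment for one cycle can render another infeasible, so your termination argument (``each repair strictly shrinks some list'') does not apply. The paper resolves this by introducing one variable per boundary point (switch or not) and one per region (red loop or blue loop), generating linear equations modulo two from the parities of blue and red edges along $(c,d)$-walks between boundary points, adding forcing equations from the lists of each region, and solving by Gaussian elimination. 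Your closing sentence about ``a bookkeeping clause that tracks red loops'' gestures at the right difficulty but supplies no mechanism; without the equation system (or an equivalent global consistency procedure) the algorithm is incomplete precisely in the one case that distinguishes reflexive good caterpillars from the irreflexive ones of Lemma~\ref{minp}.
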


Suppose that $\widehat{H}$ is not a good caterpillar. If $H$ is not a
caterpillar, then it contains $F_2$ from Figure~\ref{fig:bipclaw}, and the
problem is NP-complete by Lemma~\ref{lem:bipclaw}. Otherwise, $\widehat{H}$ 
contains an induced subgraph from ${\mathcal G}$, and the problem is NP-complete 
by Lemma~\ref{raining}.

We prove the first statement. Thus assume that $\widehat{H}$ is a good
caterpillar, with spine $v_1 \ldots v_k$ and dividing vertex $v_d$. By
symmetry, we may assume it is a good caterpillar with preferred colour blue.
We distinguish three types of rooted subtrees.
\begin{itemize} 
\item Type $T_1$: a bicoloured edge $v_ix$ with a bicoloured loop on $x$;
\item Type $T_2$: a bicoloured edge $v_ix$ with a unicoloured loop on $x$;
\item Type $T_3$: a blue edge $v_ix$ with a unicoloured loop on $x$.
\end{itemize}

There is a general version of min ordering we can use in this context. 
A {\em min ordering} of a graph $H$ is a linear ordering
$<$ of the vertices of $H$, such that for vertices $x < x', y < y'$, if $xy',
x'y$ are both edges in $H$, then $xy$ is also an edge in $H$. It is again the
case that if a graph $H$ admits a min ordering, then the list
homomorphism problem for $H$ can be solved in polynomial time by arc
consistency followed by making the minimum choice in each list~\cite{fv}. Suppose
again that $\widehat{H}$ is a good caterpillar with spine $v_1 \ldots v_k$ and
preferred colour blue. A {\em
special min ordering} of $\widehat{H}$ is a min ordering of the underlying
graph $H$ such that for any vertices $v_i, x, x'$ with edges $v_ix, v_ix'$ we
have $x < x'$ if
\begin{itemize}
\item
the edge $v_ix$ is bicoloured and the edge $v_ix'$ 
is blue, or
\item
$x$ has a bicoloured loop and $x'$ a unicoloured loop, or 
\item
$x$ has a blue loop and $x'$ has a red loop.
\end{itemize}

\begin{lemma} \label{minoo}
Every good caterpillar $\widehat{H}$ admits a special min ordering.
\end{lemma}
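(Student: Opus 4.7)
The plan is to construct an explicit linear ordering of $V(H)$ that satisfies both the ordinary min-ordering condition and all three special conditions, by direct imitation of the proof of Lemma~\ref{mino}. I would place the spine in the natural order $v_1 < v_2 < \dots < v_k$ and insert the leaf children of each spine vertex $v_i$ immediately after $v_i$ (and before $v_{i+1}$). Between $v_i$ and $v_{i+1}$ I would list the children of $v_i$ in the following order: first the subtrees of type $T_1$ (bicoloured edge, bicoloured loop), then type $T_2$ (bicoloured edge, unicoloured loop), then type $T_3$ (blue edge, unicoloured loop); within the $T_3$ block, I would further place children with blue loops before children with red loops.

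This grouping enforces all three special conditions by construction: bicoloured-edge children come before blue-edge children, bicoloured-loop children come before unicoloured-loop children, and blue-loop children come before red-loop children. The hypothesis that $\widehat{H}$ is a good caterpillar with preferred colour blue guarantees that the construction is globally consistent: for $i < d$ all children of $v_i$ are of type $T_1$ or $T_2$; for $i > d$ all children of $v_i$ are of type $T_3$ with blue loops; and at $v_d$ the constraints defining a good caterpillar force that bicoloured-loop children attach by bicoloured edges, that a unicoloured loop at $v_d$ forces all children to be of type $T_3$ with blue loops, and that red loops on children of $v_d$ can appear only in the $T_3$ block when $d=k$ and $v_d$ has a bicoloured loop. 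Thus the local groupings never clash with the global colour structure demanded by the definition.

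The remaining and main task is verifying that $<$ is in fact a min ordering of $H$: given $x < x'$ and $y < y'$ with both $xy'$ and $x'y$ edges of $H$, we must show that $xy$ is an edge. Because $H$ is a caterpillar, every non-loop edge is incident to the spine, so the case analysis is short. When all four vertices lie on the spine, the claim reduces to the standard fact that $v_1 < v_2 < \dots < v_k$ is a min ordering of a reflexive path, which holds because every spine vertex carries a loop. When some of the vertices are leaves, one exploits that a leaf has exactly one non-loop neighbour -- its spine parent -- and that $H$ is reflexive, so the edge $xy$ demanded by the condition is always either a loop or a parent-child edge already present. The main obstacle I expect is the bookkeeping in the delicate case of two leaves sharing a common spine parent $v_i$; however, any two such leaves are themselves non-adjacent in $H$, so the hypothesis $xy',x'y \in E(H)$ forces one of $y,y'$ to equal $x$ or $x'$ (via a loop), reducing the condition to an identity that is automatic. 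Once this reduction is made, the verification is routine and the lemma follows.
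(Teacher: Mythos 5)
Your overall strategy --- spine in natural order, children of $v_i$ inserted between $v_i$ and $v_{i+1}$, grouped by subtree type --- is the same as the paper's, but your ordering omits one refinement that the third special condition forces, and the omission traces back to a false structural claim. You assert that red loops on children ``can appear only in the $T_3$ block when $d=k$ and $v_d$ has a bicoloured loop.'' That is not what the definition of a good caterpillar says: for $i \leq d-1$ it only requires the loop at $v_i$ and the non-loop edges of the subtrees rooted at $v_i$ to be bicoloured; the loops on the \emph{leaves} of those subtrees are unconstrained. So a spine vertex $v_i$ with $i \leq d-1$ (and likewise $v_d$) may simultaneously have a type-$T_2$ child with a red loop and a type-$T_2$ child with a blue loop. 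Your construction leaves the $T_2$ block internally unordered, so it can place the red-loop child before the blue-loop child, violating the requirement that $x$ having a blue loop and $x'$ a red loop forces $x < x'$.

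The paper's ordering therefore refines the $T_2$ block as well: after each $v_i$ with $i \leq d-1$ (and after $v_d$ when $d = k$) it lists the $T_1$ leaves, then the $T_2$ leaves with blue loops, then the $T_2$ leaves with red loops, and only afterwards the $T_3$ leaves (blue loops before red where both can occur); when $d < k$ the $T_3$ children of $v_d$ all have blue loops by the definition of a good caterpillar, so no clash with the first special condition arises. With this one-line correction your argument goes through; the remaining verification that the underlying ordering is a min ordering of the reflexive caterpillar is routine, as you say, and the paper treats it the same way.
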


\begin{proof}
It is again easy to see that the ordering $v_1 < v_2 < \ldots < v_k$ of
$V(\widehat{H})$ in which the children of each $v_i$ are ordered between $v_i$
and $v_{i+1}$ is a min ordering of the underlying graph $H$. We may again
assume that $\widehat{H}$ has preferred colour blue. To ensure that $<$ is a
special min ordering of $\widehat{H}$, we make sure that after each vertex
$v_i$ with $i=1, 2, \ldots, d-1$, we first list the leaves of subtrees of type
$T_1$, then the leaves of subtrees of type $T_2$ with blue loop, and last the
leaves of subtrees of type $T_2$ with red loop. If $d=k$, then we proceed the
same way also after $v_d$, and then we list the leaves of subtrees of type $T_3$
with blue loop, and last the leaves of subtrees of type $T_3$ with red loop.
If $d < k$, we list after $v_d$ first the leaves of subtrees of type $T_1$,
then the leaves of subtrees of type $T_2$ with blue loop, then the leaves of 
subtrees of type $T_2$ with red loop, and last the leaves of subtrees of
type $T_3$. For vertices $v_i, i > d$, there are only subtrees of type $T_3$,
and their leaves can be listed in any order.
\qed \end{proof}

We now describe our polynomial-time algorithm. 
As in the irreflexive case, we first perform the arc
consistency test to check for the existence of a homomorphism of the
underlying graphs ($G$ to $H$).
Then we also perform the bicoloured arc consistency test. If we obtain
an empty list, there is no list homomorphism. 
Otherwise, taking again the
minima of all lists (in the special min ordering $<$) defines a list homomorphism $f\colon G \to H$ of the
underlying graphs by~\cite{fv}, and again by bicoloured arc consistency test we have that
$f$ maps bicoloured edges of $\widehat{G}$ to bicoloured edges of
$\widehat{H}$. Therefore, by
Lemma~\ref{alter} and the remarks following it, $f$ is also a list
homomorphism of the signed graphs $\widehat{G} \to \widehat{H}$, unless a negative
cycle $C$ of unicoloured edges of $\widehat{G}$ maps to a positive closed walk
$f(C)$ of unicoloured edges in $\widehat{H}$, or a positive cycle $C$ of
unicoloured edges of $\widehat{G}$ maps to a negative closed walk $f(C)$ of
unicoloured edges in $\widehat{H}$. 
The minimum choices in all lists imply
that no vertex $x$ of $C$ can be mapped to an image $y$ with $y < f(x)$. 
We proceed to modify the images of such cycles $C$ one by one,
in the order of increasing smallest vertex in $f(C)$ (in the ordering $<$), until we 
either obtain a homomorphism of signed graphs, or we find that no such 
homomorphism exists.

Let $w$ be the leaf of the last subtree of type $T_2$ rooted at $v_d$ (we let $w = v_d$ if 
$v_d$ has no subtree of type $T_2$). We note that if $d < k$, then all edges and loops 
amongst the vertices that follow $w$ in $<$ are blue, by the properties of a special min 
ordering. Also note that since the edges of $f(C)$ are unicoloured, they do not include
a bicoloured loop on $v_d$ (if there is one). We distinguish three possible cases.

\begin{itemize}
\item \textit{At least one vertex $y$ of $f(C)$ satisfies $y \leq w$:}\\
The only unicoloured closed walks including $y$ are (red or blue) loops, so
$f$ maps the entire cycle $C$ to $y$. As in the reflexive case, we may remove
$y$ from all lists of vertices of $C$ and continue seeking a better
homomorphism of the underlying graphs ($G$ to $H$).
\item \textit{All vertices of $f(C)$ except for $v_d$ follow $w$ in the order $<$ and 
$d < k$, or $d=k$ and $v_d$ does not have a subtree of Type $T_3$ with red loop: }\\
In this case $C$ is a negative cycle of unicoloured edges. The subgraph of
$\widehat{H}$ induced by the vertices after $w$ (in the order $<$) has only blue
edges and loops. Thus there is no homomorphism of signed graphs mapping
$\widehat{G} \to \widehat{H}$.
\item \textit{All vertices of $f(C)$ except for $v_d$ follow $w$ in the order $<$, $d=k$ 
and $v_d$ has a subtree of Type $T_3$ with red loop:}\\
In this case a fairly complex situation may arise because $f(C)$ can be a
closed walk using both red and blue loops, along with blue edges; see below.
\end{itemize}

We now consider the final case in detail. Since $f$ chooses minimum possible 
values of images (under $<$), we could only modify $f$ by mapping some vertices 
of $C$ that were taken by $f$ to a vertex with a blue loop, to vertex with a red loop 
instead, if lists allow it. We show how to reduce this problem to solving a system 
of linear equations modulo two, which can then be solved in polynomial time by 
(say) Gaussian elimination.
We begin by considering the pre-image (under $f$) of all vertices in the
subtrees of type $T_3$ rooted at $v_d$.  We denote by $P$ the set of vertices
$v \in V(G)$ with $f(v)$ equal to a vertex with a blue loop and by $N$ the set of
vertices $v \in V(G)$ with $f(v)$ equal to a vertex with a red loop. We say that
a vertex $x$ of $G$ is a \emph{boundary point} if $f(x)=v_d$. The set of
boundary points is denoted by $B$. Thus the pre-image of the subtrees of type
$T_3$ rooted at $v_d$ is the disjoint union $B \cup P \cup N$.  We now focus
on the subgraph $\widehat{G}'$ of $\widehat{G}$ induced by $B \cup P \cup N$. 
A \emph{region} is a connected component of $\widehat{G}' \setminus B$
together with all its boundary points, i.e. between any pair of vertices in a
region there is a path with no boundary point as an internal vertex.

Given a region $r$ and boundary points $x$ and $y$ (not necessarily distinct),
we construct (possibly several) boolean equations on the corresponding
variables, using the same symbols $x, y,$ and $r$. The variables $x, y$
indicate whether or not the corresponding boundary vertices $x$ and $y$ should
be switched before mapping them with $f$ (true corresponds to switching), and
the variable $r$ indicates whether the region $r$ will be mapped by $f$ to a
blue loop or a red loop (true corresponds to a blue loop). The equations
depend of the parity and the sign of walks between the two vertices. If $c$
and $d$ denote parities (even or odd), we say a walk $W$ from $x$ to $y$ in
$\widehat{G}'$ is a \emph{$(c,d)$-walk} if it contains no boundary points
other than $x$ and $y$, the parity of the number of blue edges in $W$ is $c$,
and the parity of the number of red edges in $W$ is $d$. The equations
generated by the $(c,d)$-walks are as follows.
\begin{itemize}
\item \textit{(odd,odd)-walk:} We add the equation $x = y + 1$.
This ensures that exactly one of the boundary vertices has to be switched, in
particular $x$ and $y$ must be distinct. The image of the walk must be
uni-balanced or anti-uni-balanced (as the whole walk maps to exactly one subtree of
type $T_3$).  A walk with an even number of edges but an odd number of red edges 
is neither. However, if we switch at exactly one of the endpoints, we can freely 
map all of the non-boundary points to a blue loop or a red loop. 
\item \textit{(even,even)-walk:} We add the equation $x = y$. 
The reasoning is similar to the previous case.
\item \textit{(odd,even)-walk:} We add the equation $x = y + r + 1$. 
The image of the walk is a closed walk with an odd number of edges and positive sign. 
Thus if both or neither of $x$ and $y$ are switched, then the walk remains
positive and $r=1$.  Conversely, switching exactly one of $x$ or $y$ 
makes the walk negative, and $r=0$.
\item \textit{(even,odd)-walk:} We add the equation $x = y + r$. 
The argument is analogous to the previous case.
\end{itemize}

It is possible that there are several kinds of walks between the same $x, y$, 
but we only need to list one of each kind, so the number of equations is 
polynomial in the size of $G$. 
A simple labelling procedure can be used for determining which kinds of walks 
exist, for given boundary points $x$ and $y$ and a region $r$. We start at the
vertex $x$, and label its neighbours $n_x$ by the appropriate pairs $(c,d)$,
determined by the signs of the edges $xn_x$. Once a vertex is labelled
by a pair $(c,d)$, we correspondingly label its neighbours; a vertex is only given
a label $(c,d)$ once even if it is reached with that label several times. Thus a
vertex has at most four labels. Any time a vertex receives a new label its neighbours
are checked again. The process ends in polynomial time (in the size of the region)
as each edge of the region is traversed at most four times. The result is inherent 
in the labels obtained by $y$.

Finally, for each region we examine the connected component of the
non-boundary vertices.  Since the arc consistency procedure was done in the
first step of the algorithm, all lists of non-boundary points for a given
region are the same. Also, by the ordering $<$, these lists must only contain
leaves of $v_d$.  Thus, the non-boundary vertices of the region must map to a
single loop.  We ensure the choice of the loop is consistent with the lists of
each region.  If the lists of vertices of some region do not contain a vertex
with a red loop, then we add the equation $r=1$ for the region.  Similarly, if
the lists do not contain a blue loop, then we add the equation $r=0$.  

Such a system of boolean linear equations can be solved in polynomial time.
Also, the system itself is of polynomial size measured by the size of $\widehat{G}$. 
This completes the proof.

\section{General trees}

In this section we handle signed trees $\widehat{H}$ in general, i.e., trees in 
which some vertices have loops while others do not. 
In homomorphism problems, reflexive and irreflexive bipartite target graphs $H$ 
tend to share some similarities, cf. e.g. \cite{rets,feder1998list,feder1999list}, and 
also both tend to be simpler. For instance, the general version of list homomorphisms 
for graphs with possible loops~\cite{feder2003bi} is significantly more involved than
both the reflexive and irreflexive bipartite cases~\cite{feder1998list,feder1999list}.
Similarly, considering general signed trees with possible loops introduces an 
additional level of difficulty.

To simplify the descriptions, we assume, without loss of generality, that 
\emph{all non-loop unicoloured edges are blue,} unless noted otherwise.
In Figure~\ref{fig:mixed_merged} we introduce our main NP-complete cases.

\begin{figure}[!htbp]
\centering
\includegraphics[height=0.97\textheight]{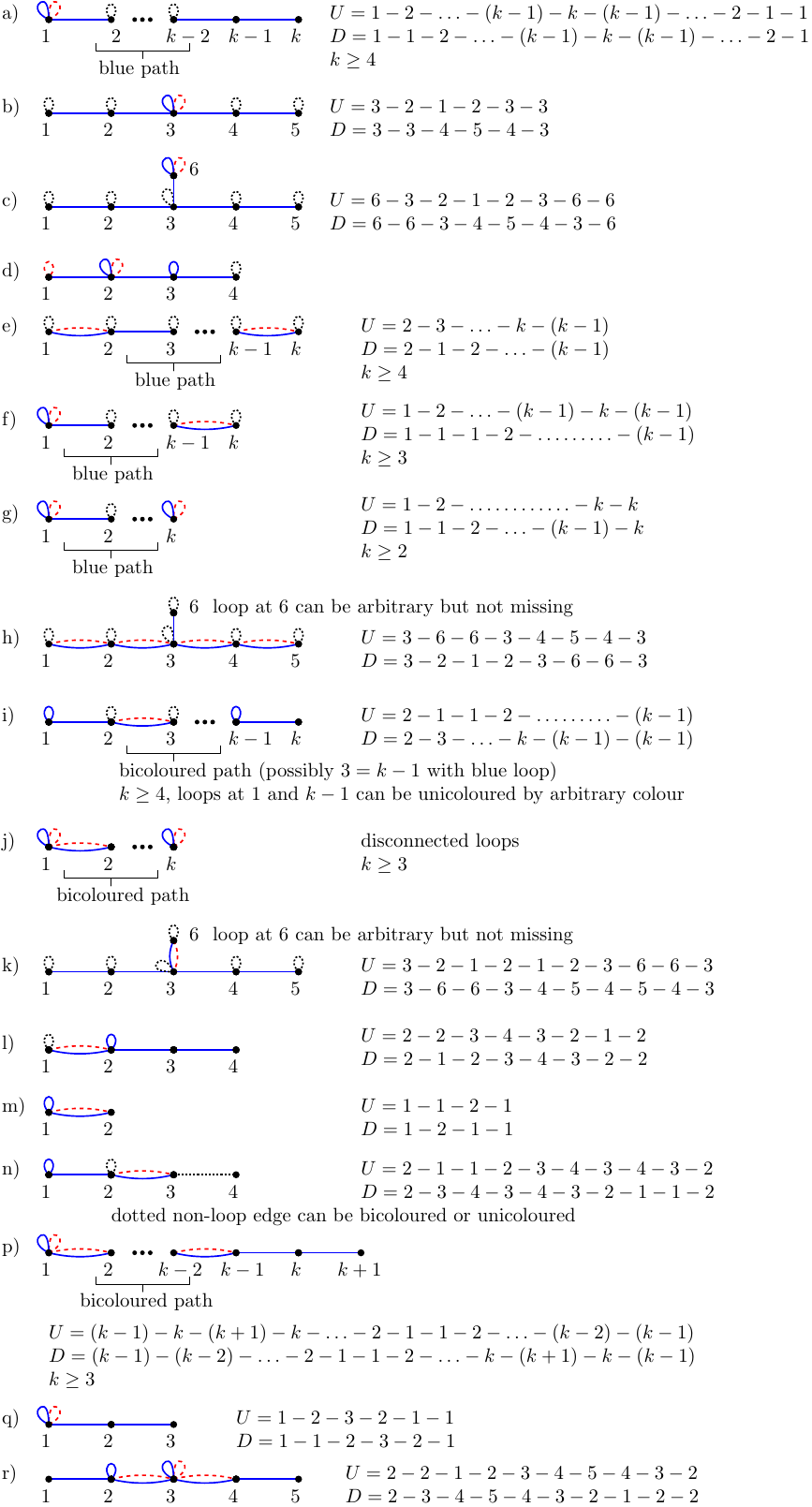}
\caption{The family ${\cal J}$. (The dotted loops can be arbitrary or missing, unless stated otherwise.)}
\label{fig:mixed_merged}
\end{figure}

We first focus on signed trees $\widehat{H}$ without bicoloured non-loop edges. If there are no bicoloured loops either, then
Theorem~\ref{thm:bordeaux} implies that $\textsc{List-S-Hom}(\widehat{H})$ is NP-complete when $\widehat{H}$ has both
a red loop and a blue loop, or when the underlying graph is not a bi-arc tree. We now introduce NP-complete
cases when bicoloured loops are allowed.

\begin{lemma} \label{lem:mixedup}
If $\widehat{H}$ contains any of the graphs a)-d) in the family ${\cal J}$ in Figure~\ref{fig:mixed_merged}, then the problem 
$\textsc{List-S-Hom}(\widehat{H})$ is NP-complete.
\end{lemma}

\begin{proof}
For each of the signed graphs a), b), and c) in family $\cal J$, we can apply Theorem~\ref{thm:chain}. 
The figure lists a chain for each of these forbidden subgraphs.

In the final case d), we reduce {\sc Not-All-Equal SAT} to
$\textsc{List-S-Hom}(H,\pi)$ where $(H,\pi)$ is the signed graph d) in
family ${\cal J}$. Let $(T',\sigma')$ be the signed graph
with the list assignments and signature shown in Figure~\ref{fig:gadget_d}. For
each clause $(x,y,z)$ in the instance of {\sc Not-All-Equal SAT}, we create a
copy of $(T',\sigma')$ identifying the leaves $x,y,z$ in $T'$ with the variables
in the clause.

\begin{figure}
\centering
\includegraphics[scale=1]{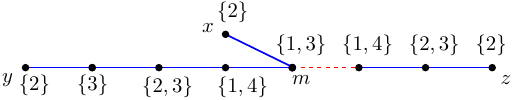}
\caption{The gadget $(T',\sigma')$ for the case d) in family ${\cal J}$.}
\label{fig:gadget_d}
\end{figure}

We claim that $(T',\sigma')$ has a list homomorphism to $(H,\pi)$ if and only
if we switch at exactly one or two elements of $\{ x, y, z \}$. We can then
view the switching at one of $\{x,y,z\}$ as setting the variable to true and,
conversely, no switching as setting to false.

Consider a mapping of $(T',\sigma')$ to $(H,\pi)$. It is easy to see that
either both $x$ and $m$ are switched or neither is switched. We also observe
that if $m$ maps to $1$, then exactly one of $m$ or $y$ must be switched.  On
the other hand, if $m$ maps to $3$, then neither or both of $m$ and $y$ is
switched.  (In the first case the image of the $(m,y)$-path is a negative walk, while
in the second case it is a positive walk.)  Thus, when $m$ maps to $1$,
exactly one of $x$ or $y$ is switched, and when $m$ maps to $3$, either both
or neither $x$ and $y$ is switched.  Finally, if $m$ maps to $1$, then we are
free to switch or not switch at $z$. On the other hand, if $m$ maps to $3$,
then we must switch at $z$ if and only if we do not switch at $m$. In conclusion,
with $m$ mapping to $1$ the following truth values are possible for $x, y, z$
respectively: $1, 0, 0; 1, 0, 1; 0, 1, 0; 0, 1, 1$, and with $m$ mapping to $3$
we obtain the possible triples $1, 1, 0$ and $0, 0, 1$ for the variables $x, y, z$.
These are precisely the not-all-equal values as claimed.
\qed \end{proof}

If bicoloured edges are present, we use the following result.

\begin{lemma} \label{lem:mixed}
If $\widehat{H}$ contains any of the graphs e)-n) in family ${\cal J}$ in Figure~\ref{fig:mixed_merged}, then the problem 
$\textsc{List-S-Hom}(\widehat{H})$ is NP-complete.
\end{lemma}

\begin{proof}
For each of the signed graphs e)-n) in family ${\cal J}$, except for the case j), we can apply Theorem~\ref{thm:chain}. The figure lists a 
chain for each of these forbidden subgraphs. The case j) follows from a result in~\cite{krokhin} implying that the 
problem is NP-complete if the vertices with loops of any colour are disconnected. Thus any signed graph $\widehat{H}$ that 
contains one of the signed graphs in the cases e)-n) of the family $\cal J$ as an induced subgraph has the problem 
$\textsc{List-S-Hom}(\widehat{H})$ NP-complete.
\qed \end{proof}

In cases p), q), and r) in Figure~\ref{fig:mixed_merged} we present three additional NP-complete trees we will use. Note 
that the case p) is an extension of case q), and the chains are also related. (Note that p) is also similar to a) in family $\cal J$.)
We note that the absence of a loop at $2$ is crucial for the chain in the case p). We also note, for the case r), that the absence 
of a loop at $4$ is crucial, while the edges $12$ or $45$ could be blue or bicoloured and the given chain would still apply.

Thus we have the following lemma.

\begin{lemma}
If $\widehat{H}$ contains any of the graphs p), q), r) in family ${\cal J}$ in Figure~\ref{fig:mixed_merged}, then the problem 
$\textsc{List-S-Hom}(\widehat{H})$ is NP-complete.
\end{lemma}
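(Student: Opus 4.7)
The plan is to invoke Theorem~\ref{thm:chain} case by case, using precisely the chains drawn in Figure~\ref{fig:mixed_merged} for p), q), and r), and then verify that each pair of walks $(U,D)$ displayed in the figure satisfies the formal chain conditions from the definition. The NP-completeness will then follow immediately, since $\textsc{List-S-Hom}$ on a supergraph that contains an induced subgraph whose list-homomorphism problem is NP-complete is itself NP-complete (this is standard from the remarks following Theorem~\ref{thm:dichotomy}).

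First I would dispose of q). As noted just before the statement, q) is the specialization of p) to $k=2$, and it is also an instance of case a) of family $\cal J$; either route gives the required chain, so q) is NP-complete by Lemma~\ref{lem:mixedup}. For p) with $k>2$, the plan is to read off from the figure the two walks $U=u_0u_1\ldots u_k$ and $D=d_0d_1\ldots d_k$ sharing endpoints, and check that (i) the end-edges $uu_1$ and $d_{k-1}v$ are unicoloured, (ii) the end-edges $ud_1$ and $u_{k-1}v$ are bicoloured, and (iii) for every intermediate index $i$, the displayed "vertical" pair $d_iu_{i+1}$ is a non-edge (in condition~1 of the chain definition) or a non-bicoloured edge (in condition~2), whichever matches. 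Here the fact that there is no loop at vertex $2$ is precisely what forces $d_iu_{i+1}$ not to exist (as an edge or bicoloured edge) at the relevant step; without this omission the chain would collapse.

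For r) the argument is the same in form: the figure exhibits a chain $(U,D)$ of length $k-1$ in which the two interior patterns again differ only at one vertex ($4$), and the absence of a loop at $4$ is what guarantees that the forbidden "cross" edge $d_iu_{i+1}$ is missing at that position. One also needs to note, as emphasized in the paragraph preceding the lemma, that whether the edges $12$ and $45$ are blue or bicoloured is immaterial for the chain: the two legs $U$ and $D$ handle both possibilities uniformly because the end-edge conditions of a chain require only one side unicoloured and the other bicoloured. With all three chain axioms verified by inspection, Theorem~\ref{thm:chain} applies and $\textsc{List-S-Hom}(\widehat{H})$ is NP-complete in each of the three cases.

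The main obstacle is routine but must be done carefully: formally checking the non-adjacency condition $d_iu_{i+1}\notin E(\widehat{H})$ (or not bicoloured, respectively) at every internal index $i$. This is where the precise absence of specific loops (at $2$ in p) and at $4$ in r)) enters, and it is the only place where a slip could invalidate the reduction; everything else is bookkeeping on the edges and colours displayed in Figure~\ref{fig:mixed_merged}.
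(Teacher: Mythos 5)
Your proposal is correct and follows essentially the same route as the paper: dispose of q) as the $k=2$ instance of p) (equivalently, as an instance of case a) of family $\cal J$), and for p) with $k>2$ and for r) apply Theorem~\ref{thm:chain} to the chains exhibited in Figure~\ref{fig:mixed_merged}, with the same key observations that the missing loop at $2$ (resp.\ at $4$) is what makes the cross pairs $d_iu_{i+1}$ non-edges, and that the colours of the edges $12$ and $45$ in r) are immaterial. Nothing further is needed.
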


If $\widehat{H}$ is a signed graph, the {\em bicoloured part} of $\widehat{H}$ is the graph $D_{\widehat{H}}$
(with possible loops) consisting of all those edges and loops that occur as bicoloured edges and loops in $\widehat{H}$, 
and all the vertices they contain. (Thus vertices of $\widehat{H}$ not incident with a bicoloured edge or loop are deleted.)
Similarly, the {\em blue part} of $\widehat{H}$ is the graph $B_{\widehat{H}}$ with possible loops consisting of all those edges 
(and loops) that are at least blue in $\widehat{H}$. Since we assume all non-loop edges of $\widehat{H}$ are blue, every 
vertex of $\widehat{H}$ is included in $B_{\widehat{H}}$. (We may think of {\em B} as standing for "blue" and {\em D} as 
standing for ``double'', in the sense of having both colours.) 

We now denote by $\cal T$ the union of all the NP-complete tree families $\cal F, \cal G, \cal J$.
There are further cases that cause the problem to be NP-complete. Theorem~\ref{thm:bordeaux} implies, in the context of trees,
that the problem is NP-complete if there are no bicoloured edges or loops and there is both a red loop and a blue loop.
Any signed graph $\widehat{H}$ which is not irreflexive and has a bicoloured edge but no bicoloured loops yields an NP-complete 
homomorphism (and hence list homomorphism) problem by Theorem~\ref{thm:dichotomy}, since the s-core contains at least one 
unicoloured loop and one bicoloured edge (counted as two edges). As discussed earlier, if the vertices with loops of any fixed 
colour induce a disconnected graph, the problem is NP-complete by~\cite{krokhin}. Finally, as mentioned earlier, if the bicoloured 
part $D_{\widehat{H}}$ yields an NP-complete list homomorphism problem, then so does $\widehat{H}$, since for bicoloured 
inputs, this is the only part of $\widehat{H}$ that can be used. Thus $\textsc{List-S-Hom}(\widehat{H})$ is also NP-complete if 
the unsigned graph $D_{\widehat{H}}$ is not a bi-arc tree, i.e., contains one of the trees in Figures 3 and 4 of~\cite{feder2003bi}.
Moreover, if $\widehat{H}$ contains no red loops, then it is also true that if the blue part $B_{\widehat{H}}$ yields an NP-complete 
list homomorphism problem, then so does $\widehat{H}$. Indeed, if there are no red loops (or edges) in $\widehat{H}$, then for an
input signed graph $\widehat{G}$ that has only blue edges, there is no cause for switching. In other words a blue input $\widehat{G}$ 
admits a signed list homomorphism to $\widehat{H}$ if and only if $G$ admits a list homomorphism to $H$. This 
is a reduction from the list homomorphism problem for $B_{\widehat{H}}$ to the signed list homomorphism problem for $\widehat{H}$. 

We say that a signed tree is \emph{colour-connected} if each of the following subgraphs is connected: the subgraph spanned by non-loop
edges that are at least blue, the subgraph spanned by non-loop edges that are at least red, the subgraph spanned by non-loop edges that 
are bicoloured, the subgraph induced by the vertices with loops that are at least blue, the subgraph induced by the vertices with loops that 
are at least red, and the subgraph induced by the vertices with loops that are bicoloured.

We call a signed tree $\widehat{H}$ a \emph{good signed tree} if it satisfies the following conditions.

\begin{enumerate}
\item
If $\widehat{H}$ has no bicoloured edge, then all the loops are of the same colour (red or blue).
\item
If $\widehat{H}$ has a bicoloured non-loop edge, then it also has a bicoloured loop, or it has no loops at all.
\item
$\widehat{H}$ is colour-connected.
\item
The blue part $B_{\widehat{H}}$ is a bi-arc tree.
\item
$\widehat{H}$ contains no signed tree from the family $\cal T$.
\end{enumerate}

\subsection{Assuming no red loops}

In this subsection, we assume that $\widehat{H}$ has no red loops. It follows from the previous section, that if such
$\widehat{H}$ is not good, then $\textsc{List-S-Hom}(\widehat{H})$ is NP-complete. In particular, $\widehat{H}$ is 
colour-connected, since (as observed before), \cite{krokhin} implies that the problem is NP-complete if the vertices 
with loops of any colour are disconnected, and the family e) in $\cal J$ implies that the problem is NP-complete if
the subgraph spanned by non-loop edges that are bicoloured is not connected. Also recall that all unicoloured 
non-loop edges are assumed to be blue, and thus all non-loop edges that are at least red are in fact bicoloured. 
In the next subsection, we prove this fact (that signed trees that are not good have NP-complete problems) is true 
if we allow red loops as well. 

We first analyze the structure of good signed trees without red loops.

Let $\widehat{H}$ be a good signed tree with no red loops and at least one bicoloured loop. 
Since the blue part $B_{\widehat{H}}$ is a bi-arc tree, we can use the results of~\cite{feder2003bi}
and~\cite{feder2007bitr}, which together characterize bi-arc trees as trees in which vertices with loops
induce a connected subgraph, and which are either obtained from a reflexive caterpillar by deleting the
loops at a (possibly empty) subset of leaves (illustrated in Figure~\ref{fig:biarc_fig5}, repeated from
Figure 5 of~\cite{feder2003bi}), or obtained from an irreflexive $2$-caterpillar in one of the following ways:
(1) (possibly) adding a loop at a good vertex $v$, or (2) adding a loop at a good vertex $v$ and on one 
neighbour $w$ of $v$ which has the property that each neighbour of $w$ other than $v$ is a leaf, or (3) 
adding a loop at a good vertex $v$ and on a (possibly empty) set of neighbours of $v$ that are leaves. 
Here a {\em good vertex} is a vertex $v$ for which there does not exist a path $P$ with seven vertices, 
with the middle vertex $u$ connected to $v$ by a path (possibly with zero edges) which is disjoint from $P$. 
It is easy to see
that if $v$ is a good vertex, then there exists a spine in which $v$ is the first vertex, $v=v_1$ (and, in case
(2), the vertex $w$ is a child of $v$, not on the spine; similarly in case (3) the leaves of $v$ to which loops 
have been added are children of $v$ not on the spine). These cases are illustrated in Figure~\ref{fig:biarc_fig6}, 
repeated here from Figure 6 in~\cite{feder2003bi}. The two $2$-caterpillars in that figure will be called Type (a) 
and Type (b), as shown.

\begin{proposition}\label{precedens}
Let $\widehat{H}$ be a good signed tree without red loops but with at least one bicoloured loop.

Then $\widehat{H}$ is either 
\begin{itemize}
\item
obtained from a good reflexive caterpillar (with spine $v_1, v_2, \ldots, v_k$) by
\begin{itemize}
\item
removing loops at a subset $S$ of leaves, and
\item
optionally replacing any bicoloured edges $v_iu$ by blue edges for these leaves $u \in S$, or
\end{itemize}
\item
is a signed $2$-caterpillar (with spine $v_1, v_2, \ldots, v_k$) obtained from a bi-arc tree by
\begin{itemize}
\item
replacing each edge and loop by a bicoloured edge and loop (respectively),
\item
optionally, for Type (b) $2$-caterpillars, adding a blue loop at a leaf adjacent to $v_1$, and
\item
optionally adding, at a spine vertex $v_i$ or at a loopless child of a $v_i$, a blue edge leading to a new (loopless) leaf.
\end{itemize}
\end{itemize}
\end{proposition}

\begin{proof}

\begin{figure}[h]
\centering
\includegraphics[scale=1]{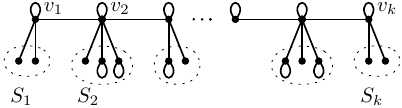}
\caption{Bi-arc caterpillars from~\cite{feder2003bi}.}
\label{fig:biarc_fig5}
\end{figure}

\begin{figure}[h]
\centering
\includegraphics[scale=1]{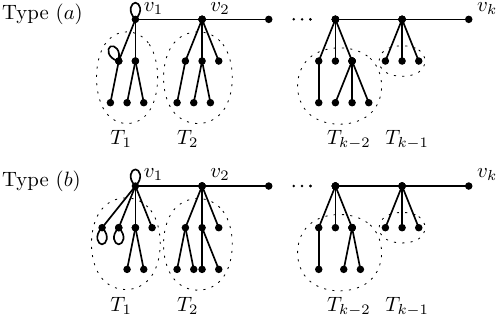}
\caption{Bi-arc $2$-caterpillars from~\cite{feder2003bi}.}
\label{fig:biarc_fig6}
\end{figure}

Assume first that the blue part $B_{\widehat{H}}$ is a bi-arc tree of the first type, in other words, a caterpillar 
with loops on all vertices on the spine and possibly some leaves. We now proceed analogously to the proof 
of Lemma~\ref{lem:forbref}, using the absence of signed trees from the family $\cal J$ instead of those from
the family $\cal G$. We sketch the analogy, and leave the detailed proof to the reader. For this, it helps to 
refer to the annotations in Figure \ref{fig:forbgraphsref} relating the cases of the family $\cal G$ to the more 
general trees in the family $\cal J$. It is also helpful to point out that the case h) of the family $\cal G$ is closely
related to the case i) of the family $\cal J$ (as well as to b) of the family $\cal F$). There is a common generalization
to all three, but it has a technical formulation we chose to omit, because other cases of the family $\cal J$ cover the
same situations; in particular the reader should note the case n), which is also helpful in the omitted proof. The 
principal difference from the proof in the reflexive case is caused by the requirement that certain loops in cases h) 
and l) in $\cal G$ have to remain present in the corresponding cases in family $\cal J$. This results in the fact that 
some vertices $v_1, v_2, \dots, v_{d-1}$ can have incident blue edges off the spine, as long as they lead to vertices 
without  loops, as enforced by the absence of the signed trees from the family $\cal J$.

Thus $\widehat{H}$ is indeed a caterpillar obtained from a reflexive signed caterpillar by removing 
loops at some leaves, and optionally replacing the bicoloured edges by blue edges to some of those
leaves.

In the remaining cases, the blue part $B_{\widehat{H}}$ is a $2$-caterpillar obtained by adding suitable
loops to an irreflexive bi-arc tree, cf. the two bi-arc trees in Figure~\ref{fig:biarc_fig6}. Specifically, there are two 
cases to consider.

In the first case, the blue part $B_{\widehat{H}}$ has two loops, at least one of which is bicoloured. According 
to~\cite{feder2007bitr}, we may choose the spine so that one loop of $B_{\widehat{H}}$ is at $v_1$ and the other at 
its child $u$. If the loop at $v_1$ is bicoloured in $\widehat{H}$, then the absence of p) and q) in family $\cal J$ 
implies that we may assume the spine consists of bicoloured edges 
only, and if a vertex $v_i$ on the spine has a (necessarily loopless) neighbour $w$ that is not a leaf, then the edge 
$v_iw$ is bicoloured. The neighbour $u$ has a loop and needs to be considered separately. We first claim that the edge 
$uv_1$ must be bicoloured, else $\widehat{H}$ contains the subtree n) from the family $\cal J$ (with $2$ corresponding 
to $v_1$), or g) from the family $\cal J$ (with $k=2$). Moreover, if the loop at $u$ is unicoloured, then $u$ must be a leaf, 
otherwise $\widehat{H}$ would contain r) from the family $\cal J$. If the loop at $v_1$ is unicoloured, then the loop at $u$
must be bicoloured (we assumed that a bicoloured loop exists). Now, unless $\widehat{H}$ arose from a reflexive 
caterpillar, it must contain a) from the family $\cal J$ (if $uv_1$ is blue), or l) from family $\cal J$ (if $uv_1$ is bicoloured). 
(Note that in both cases, the chain applies even if the edges $23, 34$ are bicoloured.) In conclusion, in this case we either 
have both loops at $v_1$ and $u$ (as well as the edge joining them) bicoloured, or the loop at $v_1$ and the edge 
$uv_1$ is bicoloured, the loop at $u$ is blue and a leaf. The former situation is depicted on the left of 
Figure~\ref{fig:mixed_from_irref} ($u$ is depicted as the child of $v_1$ in $T_1$), and the latter situation is a special case of the tree on the right, 
with only one child ($u$) of $v_1$ having a (blue) loop. Thus going from $D_{\widehat{H}}$ to $\widehat{H}$ we only 
added a blue loop on a leaf $u$ adjacent to $v_1$, and then added some blue edges leading to leaves from any spine
vertex $v_i$, or from any child of $v_2, v_3, \dots, v_k$, or from any child of $v_1$ other than $u$.

In the second case, the blue part $B_{\widehat{H}}$ has one loop at $v_1$ and possibly several other loops at leaf children 
of $v_1$. If the loop at $v_1$ is bicoloured, and possibly some of the loops at its children are also bicoloured, then the
proof proceeds exactly as in the previous case, concluding that any edge joining two vertices with loops must be bicoloured 
(else there would be a copy of the subtree n) from the family $\cal J$) and the children of $v_1$ with loops are leaves. If, 
say, leaf $u$ has a bicoloured loop and all other loops, including the loop at $v_1$, are blue in $\widehat{H}$, we again 
obtain a contradiction to the absence of a) from family $\cal J$ or l) from family $\cal J$, unless $\widehat{H}$ arose from 
a reflexive caterpillar. In conclusion, in this case, going from $D_{\widehat{H}}$ to $\widehat{H}$ involved only the addition 
of blue loops on leaves adjacent to $v_1$, and a possible addition of some blue edges from spine vertices or from non-loop 
children of spine vertices, leading to leaves as described.
\qed \end{proof}

\subsection{Allowing red loops}

We now consider signed graphs $\widehat{H}$ in which red loops are allowed. We denote by $\widehat{H'}$ 
the signed tree obtained from $\widehat{H}$ by deleting all vertices with red loops. We focus on the blue part $B_{\widehat{H'}}$ 
instead of $B_{\widehat{H}}$ because $\widehat{H'}$ has no red loops and satisfies the assumptions of Proposition~\ref{precedens}.

\begin{figure}[h]
\centering
\includegraphics[scale=1]{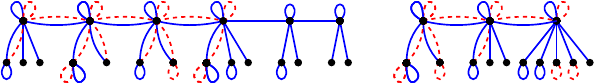}
\caption{Good signed trees obtained from a good reflexive tree by deleting loops.}
\label{fig:mixed_from_ref}
\end{figure}

\begin{figure}[h]
\centering
\includegraphics[scale=1]{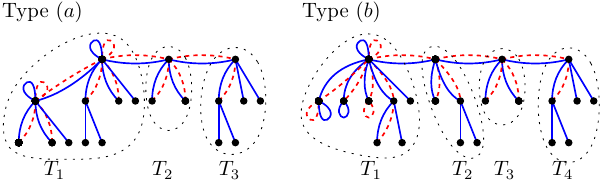}
\caption{Good signed trees obtained from a bi-arc tree as described in Proposition~\ref{description} .}
\label{fig:mixed_from_irref}
\end{figure}

\begin{proposition}\label{description}
Let $\widehat{H}$ be a good signed tree with at least one bicoloured loop.

Then $\widehat{H}$ is either 
\begin{itemize}
\item
obtained from a good reflexive caterpillar (with spine $v_1, v_2, \dots, v_k$) by
\begin{itemize}
\item
removing loops at a subset $S$ of leaves, and
\item
optionally replacing any bicoloured edges $v_iu$ by blue edges for these leaves $u \in S$, or
\end{itemize}
\item
is a signed $2$-caterpillar (with spine $v_1, v_2, \dots, v_k$) obtained from a bi-arc tree by
\begin{itemize}
\item
replacing each edge and loop by bicoloured edge and loop (respectively),
\item
optionally, for Type (b) $2$-caterpillars, adding a unicoloured loop at any leaf adjacent to $v_1$, and
\item
optionally adding, at a spine vertex $v_i$ or at a loopless child of a $v_i$, a blue edge leading to a new (loopless) leaf.
\end{itemize}
\end{itemize}
\end{proposition}

\begin{proof}
Since $\widehat{H'}$ (defined above) satisfies the assumptions of Proposition~\ref{precedens}, the tree $\widehat{H'}$ 
is described by the proposition, and we now consider where can the vertices of $B_{\widehat{H}} - B_{\widehat{H'}}$ 
be added, without violating any of the assumptions on $\widehat{H}$. Since the vertices with red loops must form 
a connected subgraph, they must be adjacent to each other and then to vertices with bicoloured loops. We now 
take in turn each case in the previous proof.

Consider the first case, when $\widehat{H'}$ is a caterpillar with reflexive spine vertices, and suppose $x$ is a
vertex in $\widehat{H}$ - $\widehat{H'}$ with a red loop. If $x$ is adjacent to a vertex on the spine of 
$\widehat{H'}$, then $\widehat{H}$ is indeed a good reflexive caterpillar with some loops on leaves removed.
If $x$ is adjacent to a leaf of $\widehat{H'}$ with a red loop, then either $\widehat{H}$ contains a copy of $F_2$, 
or $\widehat{H}$ is another good caterpillar with a different spine, and possibly different preferred colour, from which some loops at leaves have been removed.

In the second case, let $x$ be again a vertex of $\widehat{H}$ - $\widehat{H'}$ with a red loop. We claim it is adjacent to $v_1$ by a bicoloured edge; indeed it cannot be adjacent to a child $u$ of $v_1$, then $u$ would have to have a bicoloured loop or a red loop.  In this case, $\widehat{H}$ is either is a
caterpillar with reflexive spine and we are in the previous case, or we would have, in red, a path with three loops 
followed by two non-loops, which is NP-complete according to Theorem 5.1 of \cite{feder2003bi}, see Figure 3 in that paper.
\qed \end{proof}

In both cases of the proof above, we note that when the red loops are deleted (without deleting their vertices), we 
obtain a signed graph which also satisfies the assumptions of Proposition~\ref{precedens}. Moreover, if the red
loops are all changed to be blue, the same conclusion holds. These observations justify the following corollary.

\begin{corollary}\label{monitor}
Suppose $\widehat{H}$ is a signed tree.
If the blue part $B_{\widehat{H}}$ is not a bi-arc tree, then $\textsc{List-S-Hom}(\widehat{H})$ is NP-complete.
If the underlying unsigned tree is not a bi-arc tree, then $\textsc{List-S-Hom}(\widehat{H})$ is NP-complete.
\end{corollary}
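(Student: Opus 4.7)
The plan is to prove the contrapositive of each statement, leveraging the structural description of good signed trees together with the two observations stated immediately before the corollary. Suppose $\widehat{H}$ is a signed tree for which $\textsc{List-S-Hom}(\widehat{H})$ is polynomial-time solvable. First I would argue that $\widehat{H}$ must then be a \emph{good signed tree}, since the NP-completeness lemmas of this section (in particular Lemmas~\ref{lem:mixedup} and~\ref{lem:mixed}, together with the remarks following Theorems~\ref{thm:dichotomy} and~\ref{thm:bordeaux} and the reductions involving the blue and bicoloured parts) between them exhaust the ways each of the five defining conditions of ``good'' can fail.

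For the first statement, let $\widehat{H}_{0}$ be obtained from $\widehat{H}$ by deleting every red loop but retaining its vertex. By the first of the two observations preceding the corollary, $\widehat{H}_{0}$ satisfies the hypotheses of Proposition~\ref{precedens}; in particular, $\widehat{H}_{0}$ is a good signed tree with no red loops. Condition~4 of the definition of a good signed tree then gives that $B_{\widehat{H}_{0}}$ is a bi-arc tree. Since red loops contribute nothing to the blue part, $B_{\widehat{H}} = B_{\widehat{H}_{0}}$, and the conclusion follows.

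For the second statement, let $\widehat{H}^{*}$ be obtained from $\widehat{H}$ by changing every red loop into a blue loop. By the second observation, $\widehat{H}^{*}$ is also a good signed tree (with no red loops), so by Condition~4 the blue part $B_{\widehat{H}^{*}}$ is a bi-arc tree. Now every loop of $\widehat{H}^{*}$ is at least blue and, by our standing convention, every non-loop edge is at least blue, so $B_{\widehat{H}^{*}}$ coincides with the underlying unsigned tree of $\widehat{H}$ (viewed as carrying every loop present in any colour). Thus this underlying tree is a bi-arc tree, which is the contrapositive of the second claim.

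The main obstacle will be the corner case in which $\widehat{H}$ has no bicoloured loop at all, since then Proposition~\ref{description}, and with it the two observations as literally stated, does not directly apply. This case has to be dispatched separately: when no bicoloured edges are present either, Theorem~\ref{thm:bordeaux} handles the reduction and both $B_{\widehat{H}}$ and the underlying tree coincide (up to possibly stripped red loops) with the underlying graph required to be bi-arc; when bicoloured non-loop edges are present but no bicoloured loops, the remark after Lemma~\ref{lem:mixed} (via Theorem~\ref{thm:dichotomy}) forces $\widehat{H}$ to be irreflexive and one falls back to Section~\ref{sec:irreflexive}, where $B_{\widehat{H}}$ equals the underlying tree. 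Checking in each of these residual regimes that ``blue part not bi-arc'' (respectively ``underlying tree not bi-arc'') still triggers one of the established NP-complete configurations is the most delicate bookkeeping step of the proof.
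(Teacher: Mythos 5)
Your overall route---delete the red loops (or recolour them blue) and read off Condition~4 for the resulting tree via Proposition~\ref{precedens}---is the same one the paper intends, and your computations $B_{\widehat{H}}=B_{\widehat{H}_0}$ and $B_{\widehat{H}^*}=$ underlying tree are correct. But there is a genuine circularity in your opening step. You first ``argue that $\widehat{H}$ must be a good signed tree'' on the grounds that the section's NP-completeness lemmas exhaust the ways the five conditions can fail. For Condition~4 this is false in the presence of red loops: the only reduction from $\textsc{List-Hom}(B_{\widehat{H}})$ to $\textsc{List-S-Hom}(\widehat{H})$ established in the section is explicitly restricted to signed trees \emph{without} red loops (a blue input has ``no cause for switching'' only when $\widehat{H}$ has no red loops or edges). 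The statement ``$B_{\widehat{H}}$ not bi-arc $\Rightarrow$ NP-complete'' for trees \emph{with} red loops is precisely the first assertion of Corollary~\ref{monitor}; indeed the paper afterwards deduces ``not good $\Rightarrow$ NP-complete'' for red-loop trees \emph{from} this corollary, not the other way round. So you cannot assume $\widehat{H}$ is good, and consequently you cannot invoke Proposition~\ref{description} or the two observations (which are statements about good signed trees) as you do.

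The repair is to route through the induced subgraph $\widehat{H'}$ obtained by deleting the \emph{vertices} carrying red loops: if $\textsc{List-S-Hom}(\widehat{H})$ is polynomial then so is $\textsc{List-S-Hom}(\widehat{H'})$, and $\widehat{H'}$ has no red loops, so the no-red-loops case applies and $\widehat{H'}$ is good with the structure of Proposition~\ref{precedens}. One then reruns the attachment analysis from the proof of Proposition~\ref{description}---which pins down where red-loop vertices can sit using only the colour-connectivity requirement, $F_2$, and the families $\cal G$ and $\cal J$, never Condition~4 for $\widehat{H}$ itself---and only \emph{then} concludes that deleting or recolouring the red loops yields a tree satisfying the hypotheses of Proposition~\ref{precedens}, whence $B_{\widehat{H}}$ and the underlying tree are bi-arc. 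The corner case you flag (no bicoloured loop) is real but minor by comparison; the missing non-circular derivation of the structure of $\widehat{H}$ is the actual gap.
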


It follows from the first statement of Corollary~\ref{monitor} that if a signed tree is not good then 
$\textsc{List-S-Hom}(\widehat{H})$ is NP-complete even if there are red loops in $\widehat{H}$.

We now state our main theorem of this section.

\begin{theorem}\label{provable}
If $\widehat{H}$ is a good signed tree, then $\textsc{List-S-Hom}(\widehat{H})$ is polynomial-time solvable. 
\end{theorem}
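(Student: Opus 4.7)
The plan is to leverage the structural description of good signed trees given by Proposition~\ref{description}, and then adapt the algorithmic machinery already developed for the irreflexive case (Lemmas~\ref{mino},~\ref{minp}) and the reflexive case (Lemma~\ref{minoo} and the algorithm following the theorem in Section~\ref{sec:reflexive}). Specifically, for each of the two structural families produced by Proposition~\ref{description}, I would construct a \emph{special min ordering} of $V(\widehat{H})$ tailored to the new mixed setting, and then run the same three-phase algorithm: (i) arc consistency on the underlying graph using the ordering, (ii) bicoloured arc consistency to guarantee the image of every bicoloured edge is bicoloured, and (iii) a repair phase that either locally reroutes the image of offending negative cycles of unicoloured edges or, in the reflexive-like regime, encodes the remaining freedom as a system of boolean equations modulo~$2$ and solves it by Gaussian elimination.

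First I would dispose of the sub-case where $\widehat{H}$ has no bicoloured loop, since then either $\widehat{H}$ has no bicoloured edge at all (and condition~1 of "good" forces all loops to have the same colour, reducing the problem to the reflexive or unsigned list homomorphism problem on a bi-arc tree via Theorem~\ref{thm:bordeaux}), or $\widehat{H}$ is irreflexive (by condition~2) and hence covered by Theorem~\ref{thm:main_irref}. This leaves the case in which at least one bicoloured loop is present, so Proposition~\ref{description} applies verbatim.

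For the first family in Proposition~\ref{description} (trees obtained from a good reflexive caterpillar by deleting loops on a subset $S$ of leaves and possibly recolouring the corresponding edges to blue), I would extend the ordering from Lemma~\ref{minoo}: keep the order $v_1 < v_2 < \cdots < v_k$ on the spine, and after each $v_i$ list children of each of the four types (type $T_1$, type $T_2$ with blue loop, type $T_2$ with red loop, type $T_3$, and now additionally \emph{loopless} leaves) in a fixed order such that bicoloured-edge leaves precede blue-edge leaves, loopless leaves come last, and the special conditions of both Lemma~\ref{mino} and Lemma~\ref{minoo} are simultaneously satisfied. After arc consistency and bicoloured arc consistency, the only obstruction to $f$ being a signed homomorphism is a negative unicoloured cycle $C$ of $\widehat{G}$ whose image is a unicoloured closed walk of the wrong sign in $\widehat{H}$. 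Because $f$ chose minimum images, the analysis from Lemma~\ref{minp} carries over: if $f(C)$ lies below the last $T_2$-leaf of $v_d$ in the ordering, $f(C)$ is a single loop and we simply delete that vertex from the lists on $C$; if $f(C)$ lies strictly beyond $v_d$ and $d<k$, the subgraph induced there is all-blue, so no signed homomorphism exists; and if $d=k$ we reuse the boolean-equation reduction from the algorithm following the reflexive theorem to decide whether the boundary vertices can be switched and which regions map to blue versus red loops.

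For the second family (2-caterpillars obtained from a bi-arc tree by bicolouring all edges and loops, with optional unicoloured loops on leaves adjacent to $v_1$ and optional blue edges to new loopless leaves at spine vertices or at loopless children), I would analogously build a special bipartite-flavoured min ordering inspired by Lemma~\ref{mino}, placing the bicoloured parts first at each level and the added blue unicoloured attachments last. The algorithm proceeds as in Lemma~\ref{minp}; the key observation is that the optional blue leaves and the one exceptional loop near $v_1$ cannot participate in any negative closed walk of $\widehat{H}$ together with a bicoloured edge, so the case analysis of "at least one edge of $f(C)$ lies in a $T_2$-subtree / $T_4$-subtree / beyond $v_d$" transfers with only notational changes. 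The main obstacle I anticipate is verifying case~(iii) of the repair step when the two regimes interact at the dividing vertex $v_d$: one must check that a negative cycle cannot simultaneously involve a unicoloured loop added to a leaf adjacent to $v_1$ and a unicoloured non-loop edge elsewhere, which is precisely what the absence of the trees p), q), r) of family $\mathcal{J}$ and the colour-connectedness condition are designed to rule out; this reduction will require a careful enumeration using the structural constraints listed in Proposition~\ref{description}, and in the $d=k$ sub-case it again reduces to a $2$-SAT-like boolean system over the boundary switchings and regional loop-colour assignments, solvable in polynomial time by Gaussian elimination exactly as in the reflexive analysis.
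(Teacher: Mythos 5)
Your reduction of the no-bicoloured-loop case to Theorems~\ref{thm:bordeaux} and~\ref{thm:main_irref}, and your treatment of the first family of Proposition~\ref{description} (caterpillars obtained from a good reflexive caterpillar by deleting some leaf loops), both match the paper: there the proof does exactly what you describe, extending the special min ordering of Lemma~\ref{minoo} so that unicoloured non-loop neighbours of $v_1,\dots,v_{d-1}$ come late, and then rerunning the reflexive algorithm with its case analysis and the boolean-equations step for $d=k$.

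The gap is in the second family. There the paper does \emph{not} use a min ordering at all: it constructs, by hand, a \emph{conservative majority polymorphism} of the switching graph $(H^+,\pi^+)$, starting from the Feder--Hell--Huang majority $f$ of the underlying bi-arc $2$-caterpillar $T$ (Rules (A)/(B)), lifting it to triples with at most one coordinate among the pendant blue leaves $L$ via Rules (1)--(3) (with a ``domination'' correction), and then proving by a separate existence argument that the values on triples with two or three coordinates in $L$ can be chosen consistently. Polynomiality then follows from the semi-conservative majority criterion of Section~\ref{back}. Your proposal instead asserts that a ``special bipartite-flavoured min ordering'' exists and that the repair phase of Lemma~\ref{minp} ``transfers with only notational changes.'' Neither claim is substantiated, and several features of this family break the earlier setup: the graph is not bipartite (there are loops at $v_1$ and at leaves adjacent to $v_1$, some possibly red, so $\widehat{H}$ need not even be uni-balanced); there is no dividing vertex $v_d$ separating a bicoloured prefix from a blue suffix (the whole $2$-caterpillar skeleton is bicoloured and the blue pendant edges are scattered arbitrarily along it), so the three-case analysis of Lemma~\ref{minp}, which is keyed to $v_d$ and the subtree types $T_1$--$T_4$, has no counterpart; and the loops force nontrivial constraints on any min ordering (at most one neighbour of a looped vertex may precede it, and that neighbour must itself be looped), which conflict in general with the ``special'' requirement that bicoloured neighbours precede unicoloured ones. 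The paper's own concluding remark --- that the polynomial cases rely on min orderings \emph{or} majority polymorphisms and that ``neither of the methods alone is sufficient'' --- is precisely the authors' statement that this family is not handled by the min-ordering method. So the heart of the theorem, the tractability of the second family, is missing from your argument; to complete it you would need either to exhibit and verify the claimed ordering together with a new repair procedure, or to switch to a polymorphism construction as the paper does.
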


We can explicitly state the dichotomy classification as follows.

\begin{corollary}\label{mainko}
Let $\widehat{H}$ be a signed tree.

If any of the following conditions apply, then $\textsc{List-S-Hom}(\widehat{H})$ is NP-complete.

\begin{enumerate}
\item
$\widehat{H}$ has no bicoloured loop, but there is a bicoloured (non-loop) edge and a unicoloured loop.
\item
$\widehat{H}$ has no bicoloured edge, but there is a red loop and a blue loop.
\item
The bicoloured part $D_{\widehat{H}}$ is not a bi-arc tree, i.e., contains a subgraph from Figures 3 or 4 of~\cite{feder2003bi}.
\item
The blue part $B_{\widehat{H}}$ is not a bi-arc tree, i.e., contains a subgraph from Figures 3 or 4 of~\cite{feder2003bi}.
\item
$\widehat{H}$ contains a signed tree from the family $\cal T$.
\item
The set of vertices of $\widehat{H}$ with red (respectively blue, or at least blue, or bicoloured) loops induces a disconnected graph.
\end{enumerate}

If none of the conditions apply, then $\textsc{List-S-Hom}(\widehat{H})$ polynomial-time solvable.
\end{corollary}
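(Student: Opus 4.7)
The proof of the corollary consists of two directions that reduce to earlier machinery, so the main role of this argument is to verify that the six explicit conditions faithfully reformulate the negation of ``$\widehat{H}$ is a good signed tree.''

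\textbf{NP-complete direction.} For each of the six listed conditions I would point to a result already established in the paper. Condition~(1) produces an s-core with at least three edges (the bicoloured non-loop edge counts as two, plus the unicoloured loop), so NP-completeness follows from Theorem~\ref{thm:dichotomy}. Condition~(2) is precisely Theorem~\ref{thm:bordeaux} specialized to trees. Condition~(3) uses the observation in the text that restricting inputs to have all edges bicoloured reduces the list-homomorphism problem for $D_{\widehat{H}}$ to $\textsc{List-S-Hom}(\widehat{H})$, so if $D_{\widehat{H}}$ is not a bi-arc tree, NP-completeness follows from~\cite{feder2003bi}. Condition~(4) is the first statement of Corollary~\ref{monitor}. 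Condition~(5) is the union of the NP-completeness lemmas for the families ${\cal F}, {\cal G}, {\cal J}$ (Lemmas~\ref{lem:tripleclaw},~\ref{pejsek},~\ref{raining},~\ref{lem:bipclaw},~\ref{lem:mixedup}, and~\ref{lem:mixed}) together with the extensions discussed after the proof of Lemma~\ref{lem:mixed}. Condition~(6) is~\cite{krokhin} applied to each colour class of loops.

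\textbf{Polynomial direction.} Assume none of (1)--(6) holds. I would verify each of the five defining properties of a good signed tree and then invoke Theorem~\ref{provable}. Property~(1) of ``good'' (all loops have one colour when there is no bicoloured edge) is the contrapositive of~(2); property~(2) of ``good'' (a bicoloured loop exists, or no loops are present, when there is a bicoloured non-loop edge) is the contrapositive of~(1). Properties~(4) and~(5) of ``good'' directly contrapose conditions~(4) and~(5) of the corollary. Property~(3), colour-connectedness, is the most delicate: the three loop pieces of its definition correspond to the disconnectedness conditions listed in~(6), while the three edge pieces are handled as follows. The at-least-blue edges, by the standing WLOG convention that all unicoloured non-loop edges are blue, exhaust the non-loop edges and so span the underlying tree. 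The at-least-red and the bicoloured edges coincide (no purely red non-loop edges remain), and their disconnection would force an induced copy of graph a) of family ${\cal F}$ (or the corresponding member in ${\cal G}$ or ${\cal J}$), contradicting the failure of~(5).

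\textbf{Main obstacle.} All the substantive algorithmic work is already contained in Theorem~\ref{provable}, so the real task here is bookkeeping: ensuring that the concise six-item statement of the corollary captures every failure mode of ``good.'' The subtlest part is the edge-connectedness piece of colour-connectedness, which is not listed explicitly in the corollary and must be extracted from the forbidden-induced-subtree condition~(5); one also has to be careful that the ``red loop / blue loop / at least blue loop / bicoloured loop'' clauses of~(6) jointly imply all the loop-connectedness clauses needed to call $\widehat{H}$ good.
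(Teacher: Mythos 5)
Your proposal is correct and follows essentially the same route as the paper, which states this corollary without a separate proof precisely because it is the assembly you describe: the NP-complete direction collects the reductions already listed in the "General trees" discussion (Theorems~\ref{thm:dichotomy} and~\ref{thm:bordeaux}, the restriction to bicoloured inputs for $D_{\widehat{H}}$, Corollary~\ref{monitor}, the family $\cal T$ lemmas, and the result of~\cite{krokhin}), and the polynomial direction checks that the negations of the six conditions yield the five defining properties of a good signed tree so that Theorem~\ref{provable} applies. Your observation that the edge-connectedness clauses of colour-connectedness are not listed in condition~(6) and must instead be extracted from the forbidden subtrees in condition~(5) is exactly the bookkeeping point the paper leaves implicit.
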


We also state the result in the more usual complementary way, where the polynomial cases are enumerated first. Note that
here all the conditions are required to be satisfied to yield a polynomial case.

\begin{corollary}\label{malino}
Let $\widehat{H}$ be a signed tree. 
If all of the following conditions apply, then $\textsc{List-S-Hom}(\widehat{H})$ is polynomial-time solvable.

\begin{enumerate}
\item
If $\widehat{H}$ has a bicoloured non-loop edge, then it has a bicoloured loop, or it has no loops at all.
\item
If $\widehat{H}$ has no bicoloured edge, then all unicoloured loops are of the same colour.
\item
The bicoloured part $D_{\widehat{H}}$ is a bi-arc tree.
\item
The blue part $B_{\widehat{H}}$ is a bi-arc tree.
\item
$\widehat{H}$ contains no signed tree from the family $\cal T$.
\item
The vertices with red (respectively blue, respectively bicoloured) loops induce a connected subgraph of $\widehat{H}$.
\end{enumerate}

If at least one of the conditions fails, then $\textsc{List-S-Hom}(\widehat{H})$ is NP-complete.
\end{corollary}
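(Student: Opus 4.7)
The plan is to observe that Corollary~\ref{malino} is simply the contrapositive rewriting of Corollary~\ref{mainko}, so the task reduces to a careful matching exercise between the six positive conditions and the definition of a \emph{good signed tree}, plus an appeal to the NP-completeness results already assembled in the paper. First I would verify that the six positive conditions of Corollary~\ref{malino} together are equivalent to $\widehat{H}$ being a good signed tree in the sense defined just before Proposition~\ref{precedens} (conditions 1--2 match the first two requirements in that definition, condition 3 matches the bicoloured part being bi-arc is implicit because $D_{\widehat{H}}$ is a subgraph of the underlying tree, conditions 4--5 match the remaining requirements, and condition 6 captures the colour-connectedness requirement). Once the equivalence is established, Theorem~\ref{provable} immediately gives the polynomial-time direction.

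For the NP-complete direction, I would proceed by taking the negation of each of the six conditions in turn and citing the relevant result in the paper. If condition~1 fails (a bicoloured non-loop edge coexists with a unicoloured loop but no bicoloured loop), then the $s$-core of $\widehat{H}$ already has at least three edges counting the bicoloured edge twice, so Theorem~\ref{thm:dichotomy} applies. If condition~2 fails, then Theorem~\ref{thm:bordeaux} applies in the absence of bicoloured edges and loops. If condition~3 fails, restricting inputs to purely bicoloured signed graphs reduces the list homomorphism problem on $D_{\widehat{H}}$ (viewed as an unsigned graph) to $\textsc{List-S-Hom}(\widehat{H})$, and by~\cite{feder2003bi} the former is NP-complete when $D_{\widehat{H}}$ is not a bi-arc tree. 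Condition~4 failure is precisely Corollary~\ref{monitor}. Condition~5 failure is covered by Lemmas~\ref{lem:tripleclaw},~\ref{pejsek},~\ref{raining},~\ref{lem:mixedup},~\ref{lem:mixed} together with the lemma on cases p), q), r), which collectively show that any member of $\cal T$ yields NP-completeness. Finally, condition~6 failure invokes the result of~\cite{krokhin} cited in the paragraph following Lemma~\ref{lem:mixed}.

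The main obstacle, and the only part that is more than bookkeeping, is the equivalence step: one has to check that the structural description of good signed trees coming out of Propositions~\ref{precedens} and~\ref{description} (whose conclusions are phrased in terms of good reflexive caterpillars and modified bi-arc $2$-caterpillars) really is captured by the six abstract conditions stated in Corollary~\ref{malino}. In particular, one must verify that conditions~3 and~4 (bi-arcness of $D_{\widehat{H}}$ and $B_{\widehat{H}}$) together with conditions~1,~2,~5,~6 do not leave a gap relative to the structural descriptions; this was effectively the content of Propositions~\ref{precedens} and~\ref{description} and their proofs, so I would simply invoke those propositions rather than re-derive the structure. After that, the remainder is a clause-by-clause translation to the NP-complete results already established, and the complementary form in Corollary~\ref{mainko} follows by rewriting each positive condition as its negation.
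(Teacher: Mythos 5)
Your proposal is correct and follows essentially the same route as the paper: Corollary~\ref{malino} is just the complementary restatement of Corollary~\ref{mainko}, the polynomial direction is Theorem~\ref{provable} applied after matching the listed conditions against the definition of a good signed tree, and the hardness direction is the clause-by-clause appeal to Theorem~\ref{thm:dichotomy}, Theorem~\ref{thm:bordeaux}, Corollary~\ref{monitor}, the family-$\cal T$ lemmas, and the result of~\cite{krokhin}. Two small slips worth fixing: in your hardness case analysis you have swapped conditions~1 and~2 (the failure of condition~1 is the ``no bicoloured edge but both a red and a blue loop'' case handled by Theorem~\ref{thm:bordeaux}, while the failure of condition~2 is the ``bicoloured non-loop edge with a unicoloured loop and no bicoloured loop'' case handled by the $s$-core argument), and your remark that condition~3 is ``implicit'' is not justified as stated, since $D_{\widehat{H}}$ need not be an induced subgraph of $B_{\widehat{H}}$; neither issue affects the substance, as every condition's failure is still correctly matched to an NP-completeness result.
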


We now return to the proof of Theorem~\ref{provable}.
 
\begin{proof}
We show that for a good signed tree $\widehat{H}$, the problem $\textsc{List-S-Hom}(\widehat{H})$ is polynomial-time solvable.
We may assume there is a bicoloured loop, else the result follows from Theorems~\ref{thm:bordeaux} and~\ref{thm:main_irref}.
By Propositon~\ref{description} we distinguish two cases.

For the first case, let $\widehat{H}$ be a good signed tree obtained from a good reflexive caterpillar (with spine $v_1, v_2, \dots, v_k$) by removing 
loops at a subset $S$ of leaves, and optionally replacing any bicoloured edges $v_iu$ by blue edges for the leaves $u \in S$. As in 
the case of reflexive trees, we use a special min ordering of $\widehat{H}$. This means that if a vertex $v_i$ (with $1 \leq i \leq d-1$) 
has a non-loop neighbour $u$ connected by unicoloured edge, then $u$ is ordered to come after $v_{i+1}$ in the special min ordering. 
Now we can use our algorithm for reflexive trees, with the observation that if there is a negative cycle $C$ mapped to a unicoloured 
edge $v_iu$, then we can remove $u$ from lists of all vertices in $C$ and continue in modifying the images of such cycles.

For the second case, let $\widehat{H}$ be a good signed $2$-caterpillar (having spine $v_1, v_2, \dots, v_k$), obtained from a bi-arc tree by
replacing edges and loops by bicoloured edges and loops (respectively), optionally adding unicoloured loops at 
leaves of $v_1$, and then adding blue edges from the spine or children of the spine to loopless leaves. We set 
$T = V(\widehat{H})$ and $T' = V(D_{\widehat{H}})$; moreover, we set $L = T \setminus T'$. It follows from 
Corollary~\ref{mainko} that all vertices of $L$ are loopless leaves in $T$ incident with exactly one blue edge. 
We also note that if two distinct vertices $a$ and $b$ in $T'$ are adjacent in $\widehat{H}$, then they are 
adjacent by a bicoloured edge.

To prove $\textsc{List-S-Hom}(\widehat{H})$ is polynomial-time solvable, 
we shall construct a suitable majority polymorphism. Recall that for a signed graph 
$\widehat{H}$, the switching graph $S(\widehat{H})$ is constructed as follows. We represent $\widehat{H}$ as 
$(H, \pi)$ where the signature $\pi$ has all unicoloured non-loop edges blue (positive), and define $S(\widehat{H})$ 
to be the edge-coloured graph $(H^+,\pi^+)$ in which each vertex $x$ of $H$ gives rise to two vertices 
$x, x'$ of $H^+$ and each edge $xy$ of $H$ gives rise to edges $xy, x'y'$ of the colour $\pi(xy)$ in $H^+$
and edges $xy', x'y$ of the opposite colour; this definition also applies to loops, by letting $x=y$. For any 
vertex $x$ of $H$, we shall denote by $x^*$ one of $x, x'$, and by $s(x^*)$ the other one of $x, x'$.
A majority polymorphism of $(H^+,\pi^+)$ is a ternary mapping $F$ on the vertices of $H^+$ such that 
$F(x^*,y^*,z^*)$ is adjacent to $F(u^*,v^*,w^*)$ in blue (red) provided $x^*$ is adjacent to $u^*$ 
in blue (red), $y^*$ is adjacent to $v^*$ in blue (red), and $z^*$ is adjacent to $w^*$ in blue (red, 
respectively), and such that if two arguments from $x^*, y^*, z^*$ are equal, then the assigned value 
$F(x^*,y^*,z^*)$ is also equal to it. A semi-conservative majority polymorphism assigns $F(x^*,y^*,z^*)$ 
to be one of the values $x^*,y^*,z^*, s(x^*), s(y^*), s(z^*)$, and a conservative majority polymorphism 
assigns $F(x^*,y^*,z^*)$ to be one of the values $x^*,y^*,z^*$. As outlined in Section~\ref{back},
if the edge-coloured graph $(H^+,\pi^+)$ admits a semi-conservative majority polymorphism, then the 
signed list homomorphism problem for $(H,\pi)$ is polynomial-time solvable. We shall in fact construct 
a conservative majority polymorphism of $(H^+,\pi^+)$.

To construct a conservative majority polymorphism $F(x^*,y^*,z^*)$ for triples $(x^*,y^*,z^*)$ from $V(H^+)$, 
we will of course define values of triples with repetition to be the repeated value,
$$F(x^*,y^*,y^*)=F(y^*,x^*,y^*)=F(y^*,y^*,x^*)=y^*.$$

Now we partition the triples $(x^*,y^*,z^*)$ of distinct vertices of $V(H^+)$ into two sets $R_1$ and $R_2$, where 
$R_1$ consists of those triples $(x^*,y^*,z^*)$ for which at most one of $x, y, z$ is in $L$, and $R_2$ consists 
of triples that have at least two of $x, y, z$ in $L$. (The vertices $x, y, z$ of $\widehat H$ need not be distinct, as 
long as $x^*, y^*, z^*$ are distinct.) Note that two triples $(x_1^*,y_1^*,z_1^*), (x_2^*,y_2^*,z_2^*)$ that are
coordinate-wise adjacent in $\widehat H$ cannot both be in $R_2$, and if they are both in $R_1$, then there
is a coordinate $t \in \{x, y, z\}$ such that $t_1=t_2$, or the edge $t_1t_2$ is bicoloured in $\widehat H$.

The definition of $F(x^*,y^*,z^*)$ will differ for triples with $(x,y,z) \in R_1$, where we explicitly describe the 
value $F(x^*,y^*,z^*)$, and for triples with $(x,y,z) \in R_2$, where we merely prove that a suitable value 
$F(x^*,y^*,z^*)$ exists.

First we consider the underlying unsigned tree of $\widehat{H}$. It clearly contains all edges of  $B_{\widehat{H}}$,
but it also contains loops that are red in $\widehat{H}$. By Corollary~\ref{monitor}, this tree (with vertex 
set $T$), which we also denote by $T$, is a bi-arc tree, and hence has a majority polymorphism $f$~\cite{feder2003bi,feder2007bitr}. 

We now describe the polymorphism $f$ from~\cite{feder2003bi}, assuming, as above, that the bi-arc tree $T$ is one
of the trees in Figure~\ref{fig:biarc_fig6}.

In both cases, $T$ is a $2$-caterpillar with spine $v_1, v_2, \dots, v_k$, 
on which only $v_1$ has a loop, and either there is only one additional loop on a child of $v_1$ (which may have 
children), or any number of loops on children of $v_1$ which must be leaves. We denote by $T_i$ the subtree rooted
at the vertex $v_i$ of the spine as shown in Figure~\ref{fig:mixed_from_irref}.  We note that in this notation we include
the spine vertex $v_i$ in the tree $T_i$, unlike the convention in~\cite{feder2003bi}, where the spine vertex is explicitly
excluded. (Compare Figure~\ref{fig:biarc_fig6} and~\ref{fig:mixed_from_irref}.) To be able to conveniently apply the results of~\cite{feder2003bi}, we refer to 
the vertices in $T_i$ other than the root $v_i$ as being {\em inside} $T_i$. We also remark that in Section~\ref{sec:irreflexive} we used 
yet another convention for naming subtrees $T_i$ --- they were rooted subtrees at individual children of spine vertices,
cf.\ Figure~\ref{fig:example_good_irref}. In any event, in the current context each subtree $T_i$ is ordered by depth first search (in the case of $T_1$ 
giving higher priority to vertices with loops), and a total ordering of $T$ is obtained by concatenating these DFS orderings 
from $T_1$ to $T_2$ and so on. We also colour the vertices of $T$ by two colours, in a proper colouring ignoring the 
self-adjacencies due to the loops. The value $f(x,y,z)$ is defined as the majority of $x, y, z$ if two of the arguments 
$x, y, z$ are equal, and otherwise it is defined according to the following rules.

\vspace{2mm}

\noindent {\bf Rule (A)} Assume $x, y, z$ are distinct and in the same colour class. 
Let $r(x)$, $r(y)$, $r(z)$ be the (not necessarily distinct) roots of the trees containing $x, y, z$ respectively, and let $v_m$
be the median of these vertices on the spine. Then $f(x,y,z)$ is the vertex from amongst $x, y, z$ in the tree $T_m$, 
and if there are more than one in $T_m$, it is the first vertex in the DFS ordering unless one of the following occurs,
in which case it is the second vertex in the DFS ordering.
      
\begin{itemize}
  \item All three vertices $x, y, z$ lie inside $T_m$ with $m \geq 2$;
  \item all three vertices $x, y, z$ lie inside $T_1$ and at most one of 
           them has a loop;
  \item exactly two of $x, y, z$ lie inside $T_1$ and exactly one of them
           has a loop;
  \item exactly two of $x, y, z$ lie inside $T_1$, neither has a loop,
           exactly one of them is adjacent to the unique neighbour 
           of $v_1$ with a loop, and the third vertex of $x, y, z$ is not 
           $v_1$.
\end{itemize}

\noindent {\bf Rule (B)} Assume $x, y, z$ are distinct but not all in the same colour class. 

Then $f(x,y,z)$ is the first vertex in the DFS ordering of the two vertices in the same colour class, 
except when $\{x, y, z\}$ contains $v_1$ and at least one of its leaf neighbours with a loop, in which 
case $f(x,y,z) = v_1$. 

\vspace{2mm}

We now use the above conservative majority $f$ on $T$ to define a conservative majority $F$ on triples
in $R_1$. We say that a vertex $y$ {\em dominates} a vertex $x$ in $\widehat{H}$ if any blue (or red)
neighbour of $x$ is also blue (red respectively) neighbour of $y$.

\vspace{2mm}
\noindent {\bf Rule (1)}  Assume that at least two of $x^*, y^*, z^*$ are equal, say $y^*=z^*$. As mentioned earlier,
we define $F(x^*,y^*,y^*)$ to be the repeated value, 
$$F(x^*,y^*,y^*)=F(y^*,x^*,y^*)=F(y^*,y^*,x^*)=y^*.$$

\vspace{2mm}
\noindent {\bf Rule (2)} Assume that $x^*, y^*, z^*$ are distinct but two of $x, y, z$ are equal. 
Then for triples $(x^*,y^*,z^*)$ we define the value $F$ to be the first version of the repeated vertex, i.e.,
$$F(x^*,s(x^*),y^*)=F(x^*,y^*,s(x^*))= F(y^*,x^*,s(x^*))=x^*,$$
\noindent unless $x \in L$ or $x$ has a unicoloured loop in $\widehat{H}$ and $y$ dominates $x$ in $\widehat{H}$, 
in which case
$$F(x^*,s(x^*),y^*)=F(x^*,y^*,s(x^*))= F(y^*,x^*,s(x^*))=y^*.$$
\noindent (For example $F(x,x',y)=x$ and $F(x',x,y)=x'$, but $F(x,x',y)=F(x',x,y)=y$ if $y$ dominates
$x$ and $x$ has a unicoloured loop or is in $L$.)

\vspace{2mm}
\noindent {\bf Rule (3)} Assume $x^*, y^*, z^*$ are distinct and also $x, y, z$ are distinct.
For triples $(x^*,y^*,z^*)$, we define $F(x^*,y^*,z^*)$ to be the argument in the same 
coordinate as $f(x,y,z)$, except if $f(x,y,z) \in L$ and another vertex $t \in \{x, y, z\}$ dominates $f(x,y,z)$,
in which case  we define $F(x^*,y^*,z^*)$ to be the argument in the same coordinate as $t$. 

It is easy to check that if two triples $(x^*,y^*,z^*)$ and $(u^*,v^*,w^*)$ are coordinate-wise adjacent in 
blue (red) in $S(\widehat{H})$, then $(x,y,z)$ is adjacent to  $(u,v,w)$ in $T$ and hence $f(x,y,z)$ is 
adjacent to  $f(u,v,w)$ in $T$. We now check that  we can also conclude that $F(x^*,y^*,z^*)$ is adjacent 
to $F(u^*,v^*,w^*)$ in blue (red respectively).

{\bf Case 1.} $x, y, z$ are distinct and $u, v, w$ are distinct.

If $f(x,y,z)$ and $f(u,v,w)$ choose the same coordinate, then $F(x^*,y^*,z^*)$ and $F(u^*,v^*,w^*)$ also choose the 
same coordinate, and hence the values are adjacent in the right colour. (This remains true even if one or both of the 
choices $F(x^*,y^*,z^*)$ and $F(u^*,v^*,w^*)$ were modified by domination.) Otherwise, suppose without loss of generality
that $f(x,y,z)=x$ and $f(u,v,w)=v$. Then the vertex $x$ is adjacent in $T$ to both $u$ and $v$ and hence is not a loop-free 
leaf, and similarly $v$ is not a loop-free leaf. If $x \neq v$, this means that the edge $xv$ is bicoloured in $\widehat{H}$ and 
hence $F(x^*,y^*,z^*)=x^*$ is adjacent to $F(u^*,v^*,w^*)=v^*$ in both colours. If $x=v$, the same argument applies if the
loop $xv$ is bicoloured, so let us assume it is unicoloured. In this situation, Proposition \ref{description} implies that 
the vertex $x=v$ must be a leaf child of $v_1$ in $T$, and $u=y=v_1$. This is governed by the special case of Rule 
(B) in the definition of the majority polymorphism $f$, which implies that we would have $f(x,y,z)=v_1$ contradicting 
$f(x,y,z)=x$, so this case does not occur.

{\bf Case 2.} $u, v, w$ are distinct but $x, y, z$ are not distinct, say $x=y$ (but perhaps $x^* \neq y^*$).

This means that $f(u,v,w)$ is adjacent to $x = f(x,y,z)$ in $T$, and $x$ is not a loop-free leaf since it is adjacent to both $u$ 
and $v$. We now observe that if $F(u^*,v^*,w^*)$ was chosen in the same coordinate as $f(u,v,w)$, then $f(u,v,w) \in T'$ and otherwise $F(u^*,v^*, w^*)$ was chosen in the same coordinate as some $t \in T'$ by Rule (3).  Recall $(u,v,w)$ is in $R_1$ so at most one of $u$ and $v$ belongs to $L$ and both are adjacent to $x$. If $f(u,v,w) \in L$, then the other coordinate ($u$ or $v$) is the dominating vertex $t$.  Hence, $f(u,v,w)$ or $t$ is adjacent 
to $x$ by a bicoloured edge in $\widehat{H}$, and $F(u^*,v^*,w^*)$ is adjacent to $x^*$ and to $s(x^*)$ in both colours. 
(Note that $F(x^*,y^*,z^*)$ is $x^*$ regardless of whether $y^*=x^*$ or $y^*=s(x^*)$.)

{\bf Case 3.} Each triple $x, y, z$ and $u, v, w$ has exactly one repetition.

Suppose first that the repetition is in different positions, say $x=y$ and $v=w$. 

Then $f(x,y,z)=x$ is adjacent to $f(u,v,w)=v$ in $T$. If $x \neq v$, then the edge $xv$ is bicoloured in $\widehat{H}$, and 
$F(x^*,y^*,z^*)=x^*$ or $F(x^*,y^*,z^*)=s(x^*)$ and $F(u^*,v^*,w^*)=v^*$ or $F(u^*,v^*,w^*)=s(v^*)$ are 
adjacent in both colours. If $x = v$, then the same argument applies if the loop is bicoloured, and if it is unicoloured, then
Proposition \ref{description} implies that $u=z$ and $u$ has a bicoloured loop and dominates $x$, whence $F(x^*,y^*,z^*)=z^*$ 
and $F(u^*,v^*,w^*)=u^*$ and the adjacency is correct.
On the other hand, if the repetition is in the same positions, say 
$x=y, u=v$, then $F(x^*,y^*,z^*)=x^*$ is adjacent to $F(u^*,v^*,w^*)=u^*$ by the definition of $F$, and hence the edge 
has the correct colour.

{\bf Case 4.} One triple has all vertices the same, say, $x=y=z$ (but possibly $x^* \neq y^*$).

If we also have $u=v=w$, then by the pigeon principle some coordinate 
contains both $F(x^*,y^*,z^*)$ in $(x^*,y^*,z^*)$ and $F(u^*,v^*,w^*)$ in $(u^*,v^*,w^*)$, and so we have the correct 
adjacency.  If $x$ is joined to $u$ by a bicoloured edge in $\widehat{H}$, then $F(x^*,y^*,z^*)$ is joined to $F(u^*,v^*,w^*)$
with the correct adjacency (even in the domination case of Rule 3).  As both $x, u \not\in L$, the only way for the edge joining
them to be unicoloured, is $x=u$ and the edge is a unicoloured loop.  In this case by Proposition~\ref{description}, $w$
dominates $u$ and is joined to $u=x$ with a bicoloured edge in $\widehat{H}$, again ensuring the right adjacency.

\vspace{2mm}

Now we prove that one can extend the definition of $F$ to $R_2$ so that it remains a polymorphism. (It is of course
possible to define each $F(u^*,v^*,w^*)$ for $(u^*,v^*,w^*) \in R_2$ directly, but we found the arguments become more 
transparent if we only verify that a suitable choice for $F(u^*,v^*,w^*)$ is always possible.) 

Consider first values $F(x^*,y^*,z^*)$ with all three vertices $x, y, z$ in $L$. This means that $x$ is incident in $\widehat{H}$ 
with only one (necessarily blue) edge, say $xx_1$, and similarly for blue edges $yy_1, zz_1$. Thus in the switching graph 
$S(\widehat{H})$ the vertex $x$ is incident with only one blue edge, namely $xx_1$, and one red edge, namely $xx'_1$, 
and similarly for $x'$ and for $y, y', z, z'$. Note that $(x_1,y_1,z_1) \in R_1$ because two vertices of $L$ are never adjacent.
To choose the value of $F(x^*,y^*,z^*)$, we only need to take into account the existing values of $F(x_1^{**},y_1^{**},z_1^{**})$,
where $x_1^{**}$ is also either $x_1$ or $x_1'$, and similarly for $y_1^{**}, z_1^{**}$. For example, $(x,y',z')$ is coordinate-wise
adjacent in blue only to $(x_1,y'_1,z'_1)$ and in red only to $(x'_1,y_1,z_1)$, and the choices of $F(x_1,y'_1,z'_1)$
and $F(x'_1,y_1,z_1)$ occur in the same coordinate, by the definition of $F$ on $R_1$; if, say, $F(x_1,y'_1,z'_1)=x_1$
and $F(x'_1,y_1,z_1)=x'_1$, then setting $F(x,y',z')=x$ ensures that $F(x,y',z')=x$ is adjacent to $F(x_1,y'_1,z'_1)=x_1$
in blue and to $F(x'_1,y_1,z_1)=x'_1$ in red, as required. Thus in general we can choose the value $F(x^*,y^*,z^*)$ in 
the same coordinate as $F(x_1^{**},y_1^{**},z_1^{**})$, and satisfy the polymorphism property. (Note that this argument 
applies even if the vertices $x, y, z$ are not distinct.)

It remains to consider the case when exactly two of $x, y, z$ belong to $L$, say $x \in L$ and $y \in L$, with unique (blue) 
neighbours $x_1$ and $y_1$ in $\widehat{H}$, and $z \not\in L$, with neighbours $z_1, \dots, z_p$. We want to show that 
there is a suitable value for each $F(x^*,y^*,z^*)$ that maintains the polymorphism property. In the proofs below, we use the fact 
that $(x^*,y^*,z^*)$ is coordinate-wise adjacent in at least blue to each $(x_1^*,y_1^*,z_i^*)$ and possibly also $(x_1^*,y_1^*,s(z_i^*))$
(if the edge $zz_i$ is bicoloured), and adjacent in at least red to each $(s(x_1^*),s(y_1^*),s(z_i^*))$ and possibly also 
$(s(x_1^*),s(y_1^*),z_i^*)$ (if the edge $zz_i$ is bicoloured). In any event, we again denote the relevant triples by
$(x_1^{**},y_1^{**},z_1^{**})$.

Suppose that $x, y, z$ are of the same colour. We observe that $x$ and $y$ cannot lie on the spine, since they are in $L$.

Consider first the case that $x, y$ lie inside the same tree $T_r$. Recall that we say "inside" to mean $x$ and $y$ are not 
on the spine; thus vertices $x_1$ and $y_1$ also belong to $T_r$, and so $T_r$ is the median tree.
If $z$ also lies inside $T_r$, then $x, y, z$ are all children of $v_r$ or all are grandchildren of $v_r$.  The argument is similar to the case where $F(x^*, y^*, z^*)$ is chosen according to the unique neighbours of $x, y, z$.  In the former case $(x^*, y^*, z^*)$ is adjacent to $(v_r^*, v_r^*, z_i^*)$ and in the latter $x, y, z$ are leaves that do have unique neighbours.
If no neighbour $z_i$ 
of $z$ is in $T_r$, then each value $F(x_1^{**},y_1^{**},z_i^{**})$ is either $x_1^{**}$ or $y_1^{**}$ independently of the 
location of $z_i$, and hence choosing correspondingly $F(x^*,y^*,z^*)=x^*$ or $=y^*$ will ensure the polymorphism property. 
If some neighbour $z_i$ of $z$ lies in $T_r, r > 1$, then $z$ is the root of $T_{r-1},$ or of $T_r$, or of $T_{r+1}$, and in 
this case, we can choose $F(x^*,y^*,z^*)=z^*$. Indeed, in this case, $zz_i$ is bicoloured, and $z_i=x_1=y_1$ (if $z$ is in 
$T_{r-1}$ or $T_{r+1}$) or $zx_1, zy_1$ are also bicoloured (if $z$ is in $T_r$). If $r=1$, then in addition to the previous 
case the vertices $z, z_1, \dots, z_p$ can have loops (the vertices $x, y, x_1, y_1$ do not have loops). Since $x_1$ and 
$y_1$ have the same colour, if the colour of $z_i$ is different (when $z=z_i$), we have the value of 
$F(x_1^{**},y_1^{**},z_i^{**})$ equal to the first or second coordinate, and we can choose $F(x^*,y^*,z^*)$
accordingly. Note that these arguments apply also when $y^*=s(x^*)$, i.e., $x=y$. 

If $x \in T_r, y \in T_{s}$ with $r \neq s$, the arguments are similar. If no neighbour $z_j$ of $z$ lies in $T_r$ or $T_{s}$, 
then we can choose $F(x^*,y^*,z^*)=x^*$ or $=y^*$ or $z^*$, depending on which is the median tree, and if some $z_j$ lies in, 
say, $T_r$, then we can choose $F(x^*,y^*,z^*)=x^*$ or $=z^*$ as above.

Next we consider the case when $x, y, z$ do not have the same colour. We may assume that $x, y$ have different colours,
since if $x, y$ have the same colour and $z$ has a different colour then any  relevant $F(x_1^{**},y_1^{**},z_i^{**})$ is
$x_1^{**}$ or $y_1^{**}$ regardless of $z_i$ and we can choose $F(x^*,y^*,z^*)$ accordingly. However, if $z$ has a loop, 
then we need to consider also $F(x_1^{**},y_1^{**},z^{**})$, which could be $z^{**}$ or $s(z^{**})$ if $z$ is the vertex $v_1$;
in that case we can set $F(x^*,y^*,z^*)=z^*$.

Thus assume without loss of generality that $x, z$ have the same colour, but the colour of $y$ is different. Then unless $z$
has a loop, $F(x_1^{**},y_1^{**},z_i^{**})$ is $x_1^{**}$ or $z_i^{**}$, depending on whether $z_i$ precedes or follows $x_1$
in the DFS ordering. If all neighbours $z_i$ precede $x_1$, or all follow $x_1$, the uniform choice of $F(x_1^{**},y_1^{**},z_i^{**})$
allows one to choose $F(x^*,y^*,z^*)$ accordingly. The only situation when some $z_i$ precedes $x_1$ and another $z_j$ follows 
$x_1$ occurs when $z$ is the root of the tree $T_r$ containing $x$. It is easy to see that in that case we can set $F(x^*,y^*,z^*)=z^*$.
Finally, when $z$ has a loop, then we also need to consider $F(x_1^{**},y_1^{**},z^{**})=z^{**}$ and we set $F(x^*,y^*,z^*)=z^*$.
\qed \end{proof}

\section{Conclusions}

It seems difficult to give a full combinatorial classification of the complexity of list homomorphism problems for general signed graphs. We have accomplished this for signed trees (with possible loops). The polynomial algorithms rely on min ordering or on majority polymorphisms, and neither of the methods alone is sufficient.

\section{Acknowledgements}

We are grateful to two exceptionally helpful referees for their careful reading of our manuscript and their detailed and valuable feedback. 

\bibliography{bibliography}

\end{document}